\documentclass[10pt]{article}
\usepackage{amsmath,amssymb,amsthm, color,geometry, verbatim, authblk}
\numberwithin{equation}{section}
\newtheorem{theorem}{Theorem}[section]
\newtheorem{corollary}{Corollary}[section]
\newtheorem{lemma}[theorem]{Lemma}
\newtheorem{proposition}{Proposition}[section]
\newtheorem{remark}{Remark}

\let \ga=\gamma

\let \e=\varepsilon

\makeatletter

\newcommand{\Rmnum}[1]{\expandafter\@slowromancap\romannumeral #1@}
\makeatother
\newcommand \nc{\newcommand}
\nc{\ba}{\begin{array}}\nc{\ea}{\end{array}}
\nc{\be}{\begin{eqnarray}}\nc{\ee}{\end{eqnarray}}
\nc{\beq}{\begin{equation}}\nc{\eeq}{\end{equation}}
\nc{\bex}{\begin{eqnarray*}}\nc{\eex}{\end{eqnarray*}}

\begin{document}
	
\makeatletter
\renewcommand{\@maketitle}{%
	\newpage
	\null
	\vskip 2em%
	\begin{center}%
		\let \footnote \thanks
		{\LARGE \@title \par}%
		\vskip 1.5em%
		{\large
			\lineskip .5em%
			\@author\par%
		}%
		\vskip 1em%
		{\large \@date}%
	\end{center}%
	\par
	\vskip 1.5em
}
\makeatother

\title{Global spherically symmetric solutions to the isothermal compressible Navier-Stokes equations with far-field vacuum}
\author{
	Xingyang Zhang\thanks{Department of Applied Mathematics, The Hong Kong Polytechnic University, Hong Kong, China; Email: mathzhangxy@outlook.com.}}
\date{}

	\maketitle

\begin{abstract}
In this paper, we consider the global spherically symmetric strong solutions to the compressible Navier-Stokes equations with far-field vacuum and density-dependent degenerate viscosity, following the framework proposed by Bresch-Vasseur-Yu \cite{B-V-Y 2021}. For the 1D Navier-Stokes equations, Wen-Zhang  \cite{W-Z SIAM 2025} considered the Cauchy problem which established the dependence relationship $\gamma-\delta-\frac{1}{p}\ge0$
within the $W^{2,p}(\mathbb{R})$ and $p\ge 2$. 
In this paper, we establish the global existence and uniqueness of strong solutions in $H^2([a,+\infty))$, $a>0$. In particular, we remove the restriction relating ($\gamma$, $\delta$, $p$), and instead assume that $\delta > 0.7427$. This result can be regarded as the first one on spherically symmetric strong solutions to the 3D Navier-Stokes equations with density-dependent viscosity proposed in \cite{B-V-Y 2021} and far-field vacuum.
\end{abstract}

{\noindent \textbf{Keywords:} Compressible Navier-Stokes equations; far-field vacuum; global existence; large data; spherically symmetric solutions.}
	
	\vspace{4mm}
	{\noindent\textbf{AMS Subject Classification (2020):} 76N05, 76N10, 35Q30.}

\tableofcontents

\section{Introduction}
In this paper, we study the compressible Navier-Stokes equations in $\Omega=\{\vec{x}\in \mathbb{R}^3,\ |\vec{x}|\ge a\}$ where $a>0$ is a constant, namely,
\begin{equation}\label{system}
	\left\{
	\begin{aligned}
		&\rho_t + {\rm{div}} (\rho  \vec{u}) = 0,\\[2mm]
		&(\rho \vec{u})_t+ {\rm{div}} (\rho \vec{u} \otimes \vec{u})+\nabla P(\rho) =2{\rm div}(\mu(\rho)\mathbb{D}\vec{u})+\nabla(\lambda(\rho) {\rm div}\vec{u}),
	\end{aligned}
	\right.
\end{equation} where $\rho (\vec{x},t)\ge 0$ and $\vec{u}(\vec{x},t)$ define for $(\vec{x},t) \in \Omega \times [0,T]$,  and $P(\rho)=A_1\rho^\gamma$ for constants $\gamma \ge 1$ and $A_1 > 0$ denote the density, velocity and pressure function, $\mathbb{D}\vec{u}=\frac{1}{2}(\nabla \vec{u}+\nabla^t \vec{u})$ denotes the strain tensor, $\mu(\rho)=A_2\rho^{\delta}$ and $\lambda(\rho)=2A_2(\delta-1)\rho^\delta$ denote shear and bulk viscosities (we assume $A_1=A_2=1$ without loss of generality) satisfying the BD relation $$\lambda(\rho)=2(\mu'(\rho)\rho-\mu(\rho))$$
and
$$
\mu(\rho)>0, \ \ 2\mu(\rho)+3\lambda(\rho)\ge 0
$$
where $\delta \in [\frac{2}{3},1)$.

Many results have been obtained for the compressible Navier-Stokes equations. First, we review some early results. Kazhikhov and Shelukhin \cite{K-S} proved the global existence and uniqueness of strong solutions to the 1D problem with arbitrarily large initial data, under the assumptions of no vacuum and constant viscosity coefficients. Subsequently, Vaigant and Kazhikhov \cite{Vaigant} obtained the global existence and uniqueness of solutions to the 2D problem when the shear viscosity is constant and the bulk viscosity takes the form $\rho^\beta$ ($\beta>3$). 
Later, the results for $\beta>\frac{4}{3}$ in the whole space and bounded domains were presented in \cite{Huang-Li, Jiu-Wang-Xin} and \cite{Fan-Li-Li}, respectively.

For the 3D problem, Lions \cite{Lions} first established the global existence of weak solutions to the constant viscosity problem with large initial data when the adiabatic exponent $\gamma\ge\frac{9}{5}$. 
Subsequently, Feireisl, Novotn\'y and Petzeltov\'a \cite{Feireisl 2001} and Jiang and Zhang \cite{Jiang-Zhang} derived the existence of weak solutions for $\gamma>\frac{3}{2}$ and spherically symmetric weak solutions for $\gamma>1$, respectively. 
The uniqueness of the solutions obtained by Feireisl- Novotn\'y-Petzeltov\'a and by Jiang-Zhang remains a challenging open problem. 
Hu \cite{Hu2,Hu1} investigated the existence of Lions-Feireisl solutions for $\gamma\in[1,\frac{n}{2}]$ ($n=2,3$). 
Additionally, some known results exist for the global existence and uniqueness of solutions to three-dimensional problems under certain smallness assumptions, and details can be found in \cite{Hoff 1995, Hong-Hou-Peng-Zhu, Huang-Li-Xin, M-N, W-Z SIAM 2017}.

When the viscosity depends on density, the degeneracy of the equations becomes the main difficulty. 
In fact, the density-dependent viscosity can be explained by the derivation of the isentropic Navier-Stokes equations from the Boltzmann equation, please refer to \cite{Chapman-C, L-X-Y 1998} for details. 
Yang and Zhu \cite{Yang-Zhu} obtained the global existence of solutions to the 1D problem with initial data of compact or non-compact support. 
For initial density strictly positive (non-compact support and positive at $x=\pm\infty$), Mellet and Vasseur \cite{M-V 2008} proved the global existence of strong solutions to the Cauchy problem with large initial data. 
A key step in \cite{M-V 2008} is to obtain a positive lower bound for the density via the BD entropy estimate (see \cite{BD 2004, BD entropy} for details), which requires the density-dependent viscosity to satisfy:
\begin{equation}\label{Q3-mu}
	\mu(\rho)= \widetilde{\varepsilon}\rho^\delta,
\end{equation}
where $\widetilde{\varepsilon}>0$ is a constant and $\delta\in[0,\frac{1}{2})$. 
Jiu, Li and Ye \cite{Jiu JDE} considered $\mu(\rho)=1+\rho^\beta$ ($\beta \ge0 $) and established the global classical solutions. For the case $\delta=1$, Guo, Jiu and Xin \cite{GJX} considered the global existence of spherically symmetric weak solutions to the 3D problem. 
Subsequently, Haspot \cite{Haspot} extended the result of Mellet and Vasseur \cite{M-V 2008} from $\delta\in[0,\frac{1}{2})$ to $\delta\in(\frac{1}{2},1]$ by estimating the effective velocity and applying the maximum principle to the mass equation to derive a lower bound for the density. 
In Haspot's result \cite{Haspot}, due to the interaction between pressure and viscosity, the adiabatic exponent $\gamma$ must satisfy:
\begin{equation}\label{Q3-ga1}
	\gamma\ge\delta+\frac{1}{2}+\epsilon,\quad \text{with}\quad \epsilon\in(0,\frac{1}{4}).
\end{equation}

For initial data without compact support and allowing vacuum at $x=\pm\infty$, Cao, Li and Zhu \cite{C-L-Z} proved the global existence and uniqueness of strong solutions to the Cauchy problem, where the adiabatic exponent and the density-dependent viscosity satisfy:
\begin{equation}\label{Q3-ga2}
	\begin{cases}
		\gamma>1\ \text{and}\ \gamma\ge\delta+\frac{1}{2}, & \text{if}\ \delta\in(0,1);\\[2mm]
		\gamma>\frac{3}{2}, & \text{if}\ \delta=1,
	\end{cases}
\end{equation}
The necessity of the technical condition (\ref{Q3-ga2}) on the adiabatic exponent is similar to that in \cite{Haspot}, for the case of symmetric solutions when $\delta=1$, please refer to \cite{C-L-Z CVPDE}. Recently, Wen and Zhang \cite{W-Z DCDS} considered the two-fluid system with $\delta=1$ and $\gamma^\pm>1$ and proved that the global existence and uniqueness of the strong solutions to the Cauchy problem in $W^{2,p}(\mathbb{R})$ ($p\ge 2$), where $\gamma^\pm$ and $p$ satisfy
\begin{equation}\label{wz cond1}
\displaystyle \min \{\gamma^+,\gamma^-\}-1-\frac{1}{p}\ge 0.
\end{equation} The results in \cite{W-Z DCDS} can be extended to the Navier-Stokes equations, and for the case $0 < \delta < 1$ refer to \cite{W-Z SIAM 2025}. Notably, Wen-Zhang \cite{W-Z SIAM 2025} allowed $\gamma = 1$, where $\gamma$, $p$ and $\delta$ also satisfy
\begin{equation}\label{wz cond2}
\displaystyle \gamma-\delta-\frac{1}{p}\ge 0.
\end{equation}

For the general compressible Navier-Stokes equations with density-dependent viscosity, the result for $\gamma>1$ remains an open problem. 
Regarding high-dimensional problems, Vasseur and Yu \cite{V-Y 2016} and Li and Xin \cite{Li-Xin} solved the global existence of weak solutions for the 3D and 2D cases, respectively. Further research can be found in the work of Bresch, Vasseur and Yu \cite{B-V-Y 2021}. For the results of strong solutions, 
Xin and Zhu \cite{X-Z adv, X-Z JMPA} established the global 
well-posedness for $\delta>1$ and the local well-posedness for $0<\delta<1$ of the three-dimensional isentropic compressible 
Navier-Stokes equations with degenerate viscosities under suitable 
structural assumptions on $(\gamma,\delta)$ and the initial data; 
the corresponding result for the full (non-isentropic) system was obtained by Duan, Xin and Zhu \cite{Q-X-Z ARMA}.

In this paper, we consider the global spherically symmetric strong solutions to the compressible Navier-Stokes equations with far-field vacuum and density-dependent degenerate viscosity. Furthermore, we remove the technical condition between $\gamma$ and $\delta$. This paper is organized as follows: In Section 1, we establish the spherically symmetric system and introduce the main results. In Section 2, we prove the local existence and uniqueness of solutions. By removing the vacuum and constructing an auxiliary system, we establish uniform estimates and prove the strong convergence of the corresponding iterative scheme, which together with the initial data leads to the local existence and uniqueness of solutions to the original system in the presence of vacuum. In Section 3, based on the local existence result from Section 2 and the conditions of $\delta$, we derive the global {\it{a priori}} estimates and thereby complete the proof of our main theorem.


\bigskip
	
Before stating our main result, we would like to introduce some notations and definitions which will be used throughout this paper.

\subsection*{Notations}
\begin{itemize}
\item $I_a \triangleq[a,+\infty);$

\item $L^l=L^l(I_a),  \ D^{k,l}=\left\{ u\in L^1_{\rm{loc}}(I_a): \|\partial^k_x u \|_{L^l}<\infty\right\},$\ $W^{k,l}=L^l\cap D^{k,l};$

\item  $r=|\vec{x}|$, \ $\displaystyle\int = \int_{I_a}\,dr.$
		
\end{itemize}

\subsection{Symmetric system}
We assume $\vec{u}(\vec{x},t)=u(r,t)\frac{\vec{x}}{r}$, $\rho(\vec{x},t)=\rho(r,t)$ and $r\in I_a$, and we have
\begin{equation*}
	\begin{cases}
	\displaystyle [2{\rm div}(\mu(\rho)\mathbb{D}\vec{u})]^r=\frac{2}{r^2}(r^2\rho^{\delta}u_r)_r-\frac{4\rho^\delta u}{r^2},\\[2mm]
	\displaystyle [\nabla(\lambda(\rho) {\rm div}\vec{u})]^r=2(\delta-1)[\rho^\delta(u_r+\frac{2}{r}u)]_r.
	\end{cases}
\end{equation*}

Then we obtain the spherically symmetric form of system \eqref{system} as follows:
\begin{equation}\label{system1D}
	\left\{
	\begin{aligned}
		&\rho_t + \frac{1}{r^2} \big( r^2 \rho u \big)_r = 0, \\
		&(\rho u)_t + \frac{1}{r^2} \big( r^2 \rho u^2 \big)_r + P(\rho)_r= 2\delta \rho^\delta (u_r+\frac{2}{r} u)_r +2\delta \rho^{\delta-1}\rho_r u_r+2\delta(\delta-1)\rho^{\delta-1}\rho_r (u_r+\frac{2}{r}u).
	\end{aligned}
	\right.
\end{equation}

\eqref{system1D} is complemented by the following conditions
\begin{align}\label{initial}
	\begin{cases}
			(\rho,u)(r, 0) = (\rho_0,u_0)(r), \  r\in I_a,\\
			(\rho,u)\to (0,0), \ {\rm{as}}\  r \to +\infty,\ t \ge 0,
	\end{cases}
\end{align}
and
\begin{align}\label{boundary}
u(r,t)|_{r=a}=0, \ t\ge 0.
\end{align}

\subsection{Main result}

Now we are ready to state our main result.
\begin{theorem}\label{main result}
For any given $\gamma\in[1,\infty)$ and $\delta$ satisfying the following conditions
\begin{equation}\label{condi of delta}
	\begin{cases}
		\displaystyle	\frac{2}{3}\leq \delta < 1 ,\\[2mm]
				\displaystyle \frac{2\delta(2\delta-1)}{(1-\delta)^2} \triangleq K(\delta),\ \ 	\frac{4-2\delta}{1-\delta}\leq \frac{K(\delta)+\sqrt{K(\delta)^2-4K(\delta)}}{2}.
	\end{cases} 
\end{equation} 	and,we assume that the initial data ($\rho_0$, $u_0$) satisfy
\begin{equation}\label{ini data}
0<\rho_0\in L^\infty,\ r^{2+\alpha(\gamma)}\rho_0\in L^1, \ r u_0\in H^2\ \ \text{and}\ \ r(\rho_0^{\delta-1})_r\in H^1,
\end{equation}
where
$$\alpha(\gamma)=\left\{\begin{array}{l}
\displaystyle\alpha(1)\in(1,2), \ \ \ {\rm if} \ \ \ \gamma=1, \\
[3mm] \displaystyle 0, \ \ \ {\rm if} \ \ \ \gamma>1,
\end{array}
\right.
$$
and that the compatibility condition
\begin{equation}\label{compatibility condition}
2(u_{0,r}+\frac{2}{r}u_0)_r=\rho_0^{1-\delta}g,\ \ 
\end{equation}
holds for some $g\in L^2$. Then for any $T>0$ there exists a unique strong solution $(\rho,u)$ to $\eqref{system1D}$-$(\ref{initial})$ over $I_a\times [0,T]$, satisfying
\begin{equation}\label{regularity}
\begin{split}
&r^{2+\alpha(\gamma)}\rho \in C([0,T];L^1),\ \ r\rho \in C([0,T];H^2),\ \  r\rho_t \in C([0,T];H^1),  \\[2mm]
&r(\rho^{\delta-1})_r\in C([0,T];H^1),\ \ r(\rho^{\delta-1})_{rt}\in C([0,T];L^2),\\[2mm]
&ru \in C([0,T];H^2)\cap L^2([0,T];D^3) ,\ \ ru_t \in C([0,T];L^2) \cap L^2([0,T];D^1).\\
\end{split}
\end{equation}
\end{theorem}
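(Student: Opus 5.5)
The proof will follow the standard continuation scheme announced in the introduction: local well-posedness plus global a priori estimates that do not blow up in finite time.

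\textbf{Local theory (Section 2).} First I establish local existence and uniqueness of a strong solution in the class \eqref{regularity} on some $[0,T_*]$. I lift the far-field vacuum by taking $\rho_0^\epsilon=\rho_0+\epsilon\, r^{-\beta}$ with suitable decay. I then linearize \eqref{system1D} in the unknowns $\rho$, $u$ and $\phi=\rho^{\delta-1}$; the last variable is natural because the compatibility condition \eqref{compatibility condition} and the regularity $r(\rho_0^{\delta-1})_r\in H^1$ are phrased in it. For the linearized system I derive bounds that are uniform in $\epsilon$, using the weight $\rho^{\delta}$ in front of the parabolic term together with $\phi_r$ to control the degenerate terms. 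A contraction in a lower norm, namely $L^2$ of $r\sqrt\rho\,(u-\bar u)$ and of $r(\phi-\bar\phi)_r$, then gives convergence of the iteration, and uniqueness follows from the same contraction estimate.

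\textbf{Global a priori estimates (Section 3).} Assume a solution exists on $[0,T]$. I derive bounds depending only on $T$ and the data, in the following order.

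1. The basic energy identity with the $r^2$ weight. The boundary $r=a$ contributes nothing because $u(a,t)=0$. For $\gamma=1$ the entropy $\rho\log\rho$ is not sign-definite, and I control it with the moment $r^{2+\alpha(1)}\rho\in L^1$ and a Carleman-type inequality, propagating that moment via $\frac{d}{dt}\int r^{2+\alpha}\rho$.

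2. The BD entropy. I form the effective velocity $v=u+2\delta\rho^{\delta-2}\rho_r=u+\frac{2\delta}{\delta-1}(\rho^{\delta-1})_r$, which satisfies a damped transport equation. This gives $\sup_t\int r^2\rho|(\rho^{\delta-1})_r|^2$ together with a pressure dissipation term.

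3. The key step, an $L^\infty$ upper bound on $r\phi_r$ and a pointwise lower bound $\rho\ge c(T)\,r^{-m}$ that avoids any condition relating $\gamma$, $\delta$ and $p$. I would multiply the $v$-equation by $r^{2}|r\phi_r|^{q-2}r\phi_r$ and compute $L^q$ norms for large $q$. The spherical terms $\frac{2}{r}u$ and $\frac{4\rho^\delta u}{r^2}$ produce a quadratic form in $(u/r,\;\phi_r)$ whose coefficients involve $\delta$. I expect nonnegativity of this form, with an exponent $q$ satisfying $q\ge\frac{4-2\delta}{1-\delta}$, to be precisely the condition that $\frac{4-2\delta}{1-\delta}$ lies below the larger root of $x^2-K(\delta)x+K(\delta)=0$, i.e. \eqref{condi of delta}. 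This is where $\delta>0.7427$ enters. Passing $q\to\infty$ then bounds $\|r\phi_r\|_{L^\infty}$. Integrating from $r=a$, together with an upper bound on $\rho(a,t)$ obtained from the mass equation, gives $\rho^{\delta-1}\le C(1+r)$.

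4. With $\rho^\delta|u_r|$ controlled, I multiply the momentum equation by $r^2u_t$ and then by $r^2(\rho^{-\delta}\cdot)$ to obtain $ru\in L^\infty H^1$ and $\sqrt\rho\, ru_t\in L^\infty L^2$. Differentiating in time and using the compatibility condition \eqref{compatibility condition} for the initial value of $u_t$ gives $ru_t\in L^2D^1$.

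5. Elliptic regularity, reading $u_{rr}$ off the equation divided by $\rho^\delta$, gives $ru\in L^\infty H^2\cap L^2D^3$. The $H^1$ bounds on $r\phi_r$ follow from its transport equation, which closes \eqref{regularity}.

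\textbf{Closing.} The bounds from steps 1--5 are finite on any $[0,T]$, so the local solution can be continued globally, and uniqueness is inherited from the local theory. The main obstacle is step 3, the sign of the quadratic form generated by the spherical geometry. If the direct $L^q$ multiplier fails, I would instead apply a maximum principle to $w=r^{k}\phi_r$ with $k$ tuned so that the bad terms become the characteristic polynomial above.
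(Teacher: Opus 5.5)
\textbf{There is a genuine gap.} Your Step 3 is the step everything else depends on, and it cannot work as you describe it.

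\emph{The mechanism does not exist in that equation.} In spherical symmetry the flow is irrotational, so the effective velocity satisfies the damped transport equation $v_t+uv_r+\frac{\gamma}{2\delta}\rho^{\gamma-\delta}(v-u)=0$. No viscous or geometric terms survive. Testing it against $|v|^{q-2}v$, or against any power of $r\phi_r$, therefore produces no quadratic form in $(u/r,\phi_r)$. If you instead use the equation for $\phi_r\propto v-u$, you pick up $\rho^{\delta-1}(u_r+\frac{2}{r}u)_r$. That is a second-order term with a weight blowing up at infinity, and you have no control of it at that stage.

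\emph{The argument contradicts itself.} A constraint of the form $q^2-Kq+K\le 0$ confines $q$ to a bounded interval ending at $\frac{K+\sqrt{K^2-4K}}{2}$. So ``passing $q\to\infty$'' is exactly what your own condition forbids. The paper stresses that this finite upper bound on the exponent is the main obstruction compared with the 1D case.

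\emph{The boundary bound has the wrong sign.} To get $\rho^{\delta-1}\le C(1+r)$ by integrating from $r=a$, you need a \emph{lower} bound on $\rho(a,t)$, not an upper bound. Since $\rho_t=-\rho u_r$ at $r=a$, that would require $\int_0^t\|u_r\|_{L^\infty}$, which is not yet available.

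\textbf{The missing estimate, and how the paper gets it.} What you lack is the unweighted bound
$\|ru\|_{L^2}^2+\int_0^t\int r^2\rho^{\delta-1}u_r^2\le C$.
Energy and BD entropy only control $\sqrt\rho\,ru$, while the target class is $ru\in H^2$. Every later step needs this bound: $\int_0^t\|u\|_{L^\infty}^2$, then $\|v\|_{L^\infty}$ by characteristics, then $\|\rho^{\delta-2}\rho_r\|_{L^\infty}$, then the $r^2\rho^{1-\delta}u_t$ multiplier. The paper obtains it in two steps.

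First, it multiplies the \emph{momentum} equation by $pr^2|u|^{p-2}u$. The viscous terms give
$-2\delta p(p-1)\int r^2\rho^\delta|u|^{p-2}u_r^2-4(\delta-1)p^2\int r\rho^\delta|u|^{p-2}uu_r-(8\delta-4)p\int\rho^\delta|u|^p$,
a quadratic form in $(ru_r,u)$, nonpositive iff $p^2/(p-1)\le K(\delta)$. Combined with the $v$-equation tested by $pr^2\rho|v|^{p-2}v$, this yields $\|r^{2/p}\rho^{1/p}(u,v)\|_{L^p}\le C$ for $2\le p\le\frac{K+\sqrt{K^2-4K}}{2}$.

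Second, in the $L^2$ estimate of $ru$ the bad terms $\int r^2u^2u_r$ and $\int r^2(v-u)uu_r$ are absorbed by the dissipation $\int r^2\rho^{\delta-1}u_r^2$. This costs $\int r^2\rho^{1-\delta}u^4$ and $\int r^2\rho^{1-\delta}v^2u^2$. H\"older's inequality gives $\int r^2\rho^{1-\delta}u^4\le(\int r^2\rho|u|^p)^{1-\delta}(\int r^2u^2)^{\delta}$ exactly when $p(1-\delta)=4-2\delta$. Requiring this $p=\frac{4-2\delta}{1-\delta}$ to be admissible is precisely \eqref{condi of delta}. You found the right numbers but attached them to the wrong equation and the wrong quantity.

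\textbf{Further omissions.}
\begin{itemize}
\item You also need a uniform upper bound on $\rho$. The paper gets it from BD entropy plus mass, via $(\rho^{\delta-1/2})_r\in L^2$ and $\rho^{\delta-1/2}\in L^{2/(2\delta-1)}$, and it is used to handle the pressure term. You never derive it.
\item Your regularization $\rho_0+\epsilon r^{-\beta}$ still vanishes at infinity, so it does not remove the singular coefficient $\rho^{\delta-1}$. The paper adds a positive constant $\eta$ instead.
\end{itemize}
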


\begin{remark}
	By numerical computation (e.g., bisection method), condition \eqref{condi of delta} implies $1>\delta > 0.7427$. 
	We can verify that when $\delta = 0.7427$, $\displaystyle \frac{4-2\delta}{1-\delta} \approx 9.7730$ and $\displaystyle \frac{K+\sqrt{K^2-4K}}{2} \approx 9.7770 $, which satisfy condition \eqref{condi of delta}; when $\delta \to 1$, 
	\begin{equation*}
		\begin{cases}
			\displaystyle \frac{4-2\delta}{1-\delta} \sim\frac{1}{1-\delta},\\[2mm]
			\displaystyle \frac{K+\sqrt{K^2-4K}}{2} \sim\frac{1}{(1-\delta)^2}, 
		\end{cases}
	\end{equation*} which also satisfy condition \eqref{condi of delta}.
\end{remark}


\subsection{Main challenges and ideas}


Unlike the case when $\delta=1$ \cite{C-L-Z CVPDE}, when $0<\delta<1$ and $\rho \to 0$ in the far-field, the viscosity of the momentum equation exhibits singularity
\begin{equation*}
	\begin{cases}
		u_t \sim u_{rr}, \ \ \delta=1,\\
		u_t \sim \rho^{\delta-1}u_{rr}, \ \ \delta<1,
	\end{cases}
\end{equation*} which constitutes the essential difference and difficulty.

Inspired by \cite{BD 2004, BD entropy}, we introduce the effective velocity 
$$
v = u + 2\delta \rho^{\delta-2}\rho_r
$$
to reformulate the momentum equation. In \cite{W-Z SIAM 2025}, we considered the 1D Cauchy problem and established global solutions in $W^{2,p}$ under the condition $\displaystyle \gamma - \delta - \frac{1}{p} > 0$. We rewrote the pressure term in the momentum equation as $\frac{1}{\rho}(\rho^\gamma)_x \sim \rho^{\gamma-\delta}(v-u)$. By choosing a sufficiently large $p$, we derived the estimate for $\Vert v\Vert_{ L^\infty}$ with $\rho$-weighted estimates of $(u, v)$ and completed the estimate of $\Vert u \Vert_{L^p}$ via $$
\int |\rho^{\gamma-\delta}(v-u) |u|^{p-2} u|\leq C\Vert \rho^{\frac{1}{p}}\Vert_{ L^p}\Vert \rho^{\gamma-\delta-\frac{1}{p}} \Vert_{ L^\infty}\Vert v \Vert_{ L^\infty} \Vert u \Vert_{ L^p}^{p-1}+... .
$$
In this paper, we still require similar $\rho$-weighted estimates of $(u, v)$ ( $\Vert r^{\frac{2}{p}}\rho^{\frac{1}{p}} u \Vert_{ L^p}$ and $\Vert r^{\frac{2}{p}}\rho^{\frac{1}{p}} v \Vert_{ L^p}$ ) to obtain the estimate of $\Vert ru \Vert_{ L^2}$. However, due to the difficulties caused by $\delta < 1$ and the higher-dimensional system, $p$ has an upper bound depending on $\delta$ as follow:
\begin{equation*}
	\displaystyle \frac{2\delta(2\delta-1)}{(1-\delta)^2} \triangleq K(\delta),\ \ 	2\leq p \leq \frac{K(\delta)+\sqrt{K(\delta)^2-4K(\delta)}}{2}.
\end{equation*} As a consequence, $p$ cannot be chosen large enough to ensure a solution exists for all $\gamma \ge 1$. This is the main difficulty and difference between the 3D spherically symmetric problem and the 1D case.

To overcome this, by some refined energy estimates we choose a suitable $p$ satisfying $$p=\frac{4-2\delta}{1-\delta}$$ to establish the estimate of $\Vert ru \Vert_{ L^2}$ by $\rho$-weighted estimates of $(u,v)$, then we only require $$2\leq p=\frac{4-2\delta}{1-\delta}\leq \frac{K(\delta)+\sqrt{K(\delta)^2-4K(\delta)}}{2}, $$ please refer to Lemma \ref{le:rho up+vp}-\ref{le:uLP} for more details. The higher-order estimates of $u$ can be obtained directly on the original equation, without resorting to the effective velocity $v$ and its higher-order estimates. Consequently, we remove the restriction relating ($\gamma$, $\delta$, $p$). Then we can establish global solutions in $H^2$, and this result also allows the isothermal flow.


As in \cite{W-Z SIAM 2025}, since the viscosity coefficient $\rho^{\delta-1}$ becomes singular as $\rho \to 0$ in the far field, to construct a local-in-time solution we first regularize the problem by imposing a positive lower bound on the initial density (i.e.\ removing the vacuum). To obtain estimates uniform with the artificial lower bound, we rewrite the equations and introduce an new auxiliary system.
 Note that, for the local {\it{a priori}} estimates of $\rho^{\delta-1}$, we need to construct a linear known function of it to obtain some auxiliary estimates, and then, complete the proof of strong convergence of the iterative scheme in the no-vacuum regime. Finally, using the regularity of the initial data we transfer the solution of the auxiliary system back to the original one, and allow the far-field vacuum by letting the artificial lower bound $\eta \to 0$.

\section{Local-in-time existence and uniqueness}\label{sec2}
In this section, we establish the local-in-time existence and uniqueness of strong solutions to the initial-boundary value problem (\ref{system1D})-(\ref{boundary}).

\begin{theorem}\label{localthm}
For any $\gamma \ge1$, assume that the initial data $(\rho_0, u_0)$ satisfies $\rho_0 > 0$, $(r\rho_0, ru_0) \in H^2$, $r(\rho_0)^{\delta-1}  \in D^1\cap D^2$, and the initial compatibility condition (\ref{compatibility condition}). Then there exists a time $T^*>0$ such that (\ref{system1D})-(\ref{boundary}) admits a unique strong solution $(\rho, u)$ satisfying
\begin{equation}\label{regularity-2}
\begin{split}
&0<r\rho,\ \ r\rho \in C([0,T^*];H^2),\ \  r\rho_t \in C([0,T^*];H^1),  \\[2mm]
&r(\rho^{\delta-1})_r\in C([0,T^*];H^1),\ \ r(\rho^{\delta-1})_{rt}\in C([0,T^*];L^2),\\[2mm]
&ru \in C([0,T^*];H^2)\cap L^2([0,T^*];D^3),\ \ ru_t \in C([0,T^*];L^2)\cap L^2([0,T^*];D^1).\\
\end{split}
\end{equation}
\end{theorem}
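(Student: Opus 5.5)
We will prove Theorem \ref{localthm} by the approach outlined in the introduction: regularize away the far-field vacuum, solve a linearized auxiliary system by iteration with bounds uniform in the regularization, and then pass to the limit. First, for $\eta\in(0,1)$ we set $\rho_0^\eta=\rho_0+\eta$ (suitably cut off so that $r\rho_0^\eta$ remains in $H^2$ locally), and adjust $u_0^\eta$ so that the compatibility condition (\ref{compatibility condition}) holds with some $g^\eta$ bounded in $L^2$ uniformly in $\eta$. We then rewrite (\ref{system1D}) for $\rho>0$ in nonconservative form, with the new unknowns $\phi=\rho^{\delta-1}$ (equivalently $\psi=(\rho^{\delta-1})_r$) and $u$:
\begin{equation*}
u_t+uu_r+\frac{\gamma}{\gamma-1}(\rho^{\gamma-1})_r-2\delta\phi\Big(u_r+\frac{2}{r}u\Big)_r
=\text{l.o.t. in } \psi\,\Big(u_r+\frac{2}{r}u\Big),\qquad
\phi_t+u\phi_r+(\delta-1)\phi\Big(u_r+\frac{2}{r}u\Big)=0.
\end{equation*}
When $\gamma=1$, we replace the pressure term by $(\log\rho)_r=\frac{1}{\delta-1}\phi^{-1}\psi$. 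In the auxiliary system the singular coefficient $\phi$ multiplies the highest derivative of $u$, so the coefficient is degenerate-parabolic with weight $\phi\ge c(\eta)>0$ at the linear level. To control the constants uniformly in $\eta$ we will work only with quantities weighted by $\phi^{-1}$, as is done in \cite{W-Z SIAM 2025}.

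Next comes the linearization and its uniform estimates. Given a known pair $(\bar\phi,\bar u)$ in a ball of the energy space defined by (\ref{regularity-2}), we solve the transport equation for $\phi$ along $\bar u$, which is explicit by characteristics. We then solve the linear parabolic equation for $u$ with coefficient $2\delta\phi$ and Dirichlet condition (\ref{boundary}); classical theory gives existence for fixed $\eta$. The core of the argument is the set of a priori bounds, uniform in $\eta$, on
\begin{equation*}
\|r\psi\|_{H^1},\quad \|r\phi_t\|,\quad \|ru\|_{H^2},\quad \|ru_t\|_{L^2},\quad \int_0^t\big(\|\phi^{1/2}r u_{rt}\|^2+\|ru\|_{D^3}^2\big).
\end{equation*}
These come from energy estimates on $u$, $u_r$ and $u_t$: we multiply by $r^2u_t$ and so on, differentiate in $t$, and use the compatibility condition to bound $\|ru_t(0)\|_{L^2}$ by $\|g\|_{L^2}$. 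The term $\phi(u_r+\frac2ru)_r$ is recovered from the equation, giving $\|r(u_r+\frac2ru)_r\|\le \|\phi^{-1}\|_{L^\infty}\cdots$. Here we plan to use the constructed linear known function of $\rho^{\delta-1}$ mentioned in the introduction. The idea is to estimate the quantity $\phi\,(u_r+\frac2ru)_r$ directly, so that only positive powers of $\phi$ enter, rather than $\phi^{-1}$. The weights $r$ are harmless on $I_a$ with $a>0$, since $1/r\le 1/a$. Standard Gronwall arguments then give a time $T^*$ independent of $\eta$ and a ball preserved by the map $(\bar\phi,\bar u)\mapsto(\phi,u)$.

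For convergence, we take differences of successive iterates and estimate them in the weaker norms $\|r(\phi^{k+1}-\phi^k)\|_{L^2}$ (or the $\psi$ difference) and $\|r(u^{k+1}-u^k)\|_{L^2}$, plus $\int\phi\,|r(u^{k+1}-u^k)_r|^2$. This yields a contraction for small $T^*$, so the iterates converge strongly, and by interpolation with the uniform bounds the limit has the regularity in (\ref{regularity-2}). Uniqueness follows from the same difference estimate applied to two solutions. Finally, we recover $\rho=\phi^{1/(\delta-1)}>0$, check that it solves the mass equation and (\ref{system1D}), and let $\eta\to0$ using weak-* compactness and lower semicontinuity. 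The positivity $r\rho>0$ persists because $\phi$ stays finite along characteristics. I expect the main obstacle to be the $\eta$-uniform control of the highest-order term in the difference estimate, where $(\phi^{k}-\phi^{k-1})(u^k_r+\frac2ru^k)_r$ appears. This term requires an $L^\infty$ bound on $r(u^k_r+\frac2ru^k)_r$, which is available only in $L^2_t$ through the $D^3$ bound. I would first handle it by placing $\phi^{k}-\phi^{k-1}$ in $L^\infty$ via its $H^1$-type norm from the $\psi$ difference.
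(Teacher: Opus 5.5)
Your overall architecture matches the paper's: regularize by $\rho_0+\eta$, take $\rho^{\delta-1}$ as an unknown, use a known function for $\rho^{\delta-1}$ to control $\rho^{\delta-1}(u_r+\frac{2}{r}u)_r$, iterate with $\eta$-uniform bounds, recover, and let $\eta\to0$. However, the contraction step does not close as you set it up, and the reason is that you have the singularity backwards. Since $\rho\le C$, your $\phi=\rho^{\delta-1}$ (the paper's $h/2$) is bounded \emph{below} uniformly in $\eta$, and $\phi^{-1}=\rho^{1-\delta}$ is harmless. What is lost as $\eta\to0$ is the \emph{upper} bound $\phi\le C_\eta$. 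So the problem is uniformly parabolic with an unbounded coefficient, not degenerate, and it is positive powers of $\phi$ that are dangerous.

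Consequently $\bar\phi=\phi^k-\phi^{k-1}$ solves a transport equation forced by $(\delta-1)\phi^{k-1}(\bar u^{k-1}_r+\frac{2}{r}\bar u^{k-1})$. Its $L^2$ norm is not controlled by your dissipation $\int\phi\,|r\bar u_r|^2$ uniformly in $\eta$. Your contraction time therefore depends on $\eta$ and may shrink to zero, so passing $\eta\to0$ would need a continuation argument built on the uniform bounds, which you do not supply.

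The paper closes the difference estimate uniformly by multiplying the momentum equation by $h^k$ \emph{before} differencing:
\begin{itemize}
\item The viscous term becomes $\delta(h^k)^2(\bar u^k_r+\frac{2}{r}\bar u^k)_r$, which produces the dissipation $\|h^k\bar u^k_r\|_{L^2}^2+\|h^k\frac{2}{r}\bar u^k\|_{L^2}^2$. This is exactly what the $\bar h^k$ estimate in Lemma~\ref{conv of h} needs.
\item Weights of consecutive iterates are compared through $\|h^k/h^{k-1}\|_{L^\infty}\le C$, which is the second role of the known function $H$.
\item The pressure becomes $\frac{\gamma}{2\delta}h\rho^{\gamma-\delta}(v-u)$, and $h\rho^{\gamma-\delta}=2\rho^{\gamma-1}$ is bounded, including for $\gamma=1$.
\item The effective velocity's difference $\bar v^k$ solves a damped transport equation whose forcing contains no derivative of $\bar u^{k-1}$.
\end{itemize}
Together these mean the scheme never needs more than one derivative of $\bar u$, and no derivative of $\bar h$.

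The obstacle you single out is not the real one, and your remedy would break the scheme. Tested against $r^2\bar u$, the term $(\phi^k-\phi^{k-1})(u_r+\frac{2}{r}u)_r$ is bounded by $\|\bar\phi\|_{L^2}\,\|r(u_r+\frac{2}{r}u)_r\|_{L^2}\,\|r\bar u\|_{L^\infty}$. The factor $\|r\bar u\|_{L^\infty}\le C\|r\bar u\|_{H^1}$ is absorbed by the dissipation precisely because $\phi$ is bounded below. This is how the paper treats $W_2+W_3$, taking $rh^{k-1}(u^{k-1}_r+\frac{2}{r}u^{k-1})_r$ in $L^2$ from the iteration ball. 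No $L^\infty$ bound on $(u_r+\frac{2}{r}u)_r$ is required.

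Putting $\bar\phi$ in $L^\infty$ through $\bar\psi$ instead loses derivatives. The equation for $\bar\psi$ is forced by $\phi^{k-1}\partial_r(\bar u^{k-1}_r+\frac{2}{r}\bar u^{k-1})$, a second derivative of $\bar u$ carrying the unbounded weight, and that quantity is absent from your contraction norm.

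A smaller point: no cut-off of $\rho_0+\eta$ and no modification of $u_0$ are needed. The paper works with $r(\rho-\eta)\in H^2$ and takes $\tilde g=\frac{\tilde h_0}{h_0}g$, which satisfies $|\tilde g|\le|g|$ because $\tilde h_0\le h_0$.
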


\subsection{An auxiliary system with positive initial density}
Motivated by the formulation of BD entropy \cite{BD 2004, BD entropy} and the idea of constructing an auxiliary system in \cite{W-Z SIAM 2025}, we introduce the effective velocity $v=u+2\delta\rho^{\delta-2}\rho_r$ and two new variables: $\phi=\rho^{\gamma-\delta}$ and $h=2\rho^{\delta-1}$. Then, system \eqref{system1D} can be formally rewritten as follows:
\begin{equation}\label{reform sys}
\left\{
\begin{aligned}
&\rho_t+\rho_r u+\rho u_r+\frac{2}{r}\rho u=0,\\
&h_t+h_ru+(\delta-1)hu_r+\frac{2}{r}(\delta-1)hu=0,\\
&\phi_t+ \phi_r u+(\gamma-\delta)\phi u_r+\frac{2}{r}(\gamma-\delta)\phi u=0,\\
&v_t+uv_r+\frac{\gamma}{2\delta} \phi (v-u)=0,\\
&u_t+uu_r+\frac{\gamma}{2\delta}  \phi (v-u)=\delta h(u_r+\frac{2}{r}u)_r+\delta(v-u)u_r+(\delta-1)(v-u)\frac{2}{r}u.
\end{aligned}
\right.
\end{equation} which is complemented by some initial-boundary conditions:
\begin{equation}\label{lin initial}
\begin{split}
(\rho,h,\phi, v,u)(r,0)=(\tilde \rho_0,\tilde h_0,\tilde \phi_0,\tilde v_0=u_0+\frac{\delta}{\delta-1}\tilde h_{0,r}, u_0)(r),
\end{split}
\end{equation}
\begin{equation}\label{lin boundary}
	u|_{r=a}=0,
\end{equation} and
\begin{equation*}
	(\rho,\phi,h,u) \to (\eta,\eta^{\gamma-\delta},2\eta^{\delta-1}, 0  ) \ \ \text{as}\ \  r \to +\infty,
\end{equation*} where
\begin{equation}\label{ini 22}
\tilde \rho_0=\rho_0+ \eta,\ \ \tilde h_0=2\tilde \rho_0^{\delta-1},\ \ \tilde \phi_0=\tilde \rho_0^{\gamma-\delta} \ \ \text{(the constant $\eta$ satisfies} \ \ 0<\eta<1 ),
\end{equation} and the initial compatibility conditions
\begin{equation}\label{compatibility condition local}
	\partial_r(u_{0,r}+\frac{2}{r}u_0)=(\tilde h_0)^{-1}\tilde g,\ \  
\end{equation}
for some $\tilde g\in L^2$, which satisfies the following condition:
$$
\tilde g=\frac{\tilde h_0}{h_0}g.
$$

The assumptions of Theorem \ref{localthm}, together with \eqref{lin initial} and \eqref{ini 22}, imply that
\begin{equation}\label{new ini}
\begin{cases}
r(\tilde\rho_0-\eta) \in H^2,\  \tilde \phi_0 > \eta^{\gamma-\delta},  \ 2\eta^{\delta-1}>\tilde h_0> \Vert \tilde \rho_0 \Vert_{ L^\infty}^{\delta-1},\  ru_0 \in H^2,\  r\tilde \phi_{0,r}\in H^1, \ \tilde \phi_0\in L^\infty,\\[2mm]
 r\tilde h_{0,r}  \in H^1,\ 
 \tilde \phi_0\tilde h_0=2(\tilde \rho_0)^{\gamma-1} \in L^\infty, \ r\tilde \phi_0\tilde h_0 \in D^1,\ \  r\tilde v_0\in H^1.\
\end{cases}
\end{equation}

Now, we establish the local-in-time existence and uniqueness of strong solutions to (\ref{reform sys})-(\ref{compatibility condition local}).

\begin{theorem}\label{localthm2}
Under the conditions of (\ref{new ini}), there exists a time $T^*>0$ such that the initial-boundary value problem (\ref{reform sys})-(\ref{compatibility condition local}) admits a unique strong solution $(\rho,h,\phi, v,u)$ satisfying
\begin{equation}\label{regularity-3}
\begin{split}
&r (\rho-\eta) \in C([0,T^*];H^2),\ \  r\rho_t \in C([0,T^*];H^1),  \ \  \phi \in C([0,T^*];L^\infty)\\
&(r\phi_r, rh_r)\in C([0,T^*];H^1),\ \ r\phi_t \in C([0,T^*];H^1), \ \ rh_t \in C([0,T^*];D^1),  \\
&r v \in C([0,T^*];H^1),\ \  rv_t \in C([0,T^*];L^2),  \\
&ru \in C([0,T^*];H^2)\cap L^2([0,T^*];D^3),\ \ ru_t \in C([0,T^*];L^2)\cap L^2([0,T^*];D^1).\\
\end{split}
\end{equation} 
\end{theorem}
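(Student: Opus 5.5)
The proof will follow the standard linearization--iteration scheme for degenerate-viscosity systems without vacuum, adapted from \cite{W-Z SIAM 2025}. Since $\tilde\rho_0\ge\eta>0$, we have $\tilde h_0\in(\|\tilde\rho_0\|_{L^\infty}^{\delta-1},2\eta^{\delta-1})$. The diffusion coefficient $\delta h$ in the $u$-equation of \eqref{reform sys} is therefore initially bounded above and below, and the parabolic equation for $u$ is uniformly parabolic on a short time interval.

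\textbf{Step 1: the linearized problem.} Fix a known quadruple $(w,\bar h,\bar\phi,\bar v)$ in the regularity class \eqref{regularity-3}, with $w|_{r=a}=0$. We solve:
\begin{itemize}
\item the linear transport equations for $\rho,h,\phi$ with transporting velocity $w$, i.e. replace $u$ by $w$ in the first three equations of \eqref{reform sys};
\item the linear equation $v_t+wv_r+\frac{\gamma}{2\delta}\phi(v-w)=0$;
\item the linear parabolic equation
\[
u_t+wu_r-\delta h(u_r+\tfrac{2}{r}u)_r=-\tfrac{\gamma}{2\delta}\phi(v-w)+\delta(v-w)w_r+(\delta-1)(v-w)\tfrac{2}{r}w
\]
with Dirichlet condition \eqref{lin boundary}.
\end{itemize}
The transport equations are solved along characteristics. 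Because $w(a,t)=0$, no boundary data is needed at $r=a$, and the characteristics stay in $I_a$. This gives explicit positive lower and upper bounds on $\rho,h,\phi$ in terms of $\int_0^t\|w_r\|_{L^\infty}+\|w/r\|_{L^\infty}$. The parabolic equation has the coefficient $\delta h$ pinched between positive constants, so classical Galerkin or Schauder theory on $I_a$ gives a unique solution. The compatibility condition \eqref{compatibility condition local} guarantees $ru_t(0)\in L^2$.

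\textbf{Step 2: uniform a priori bounds.} Define an energy $\Phi(t)$ collecting $\|r(\rho-\eta)\|_{H^2}$, $\|r\phi_r\|_{H^1}$, $\|rh_r\|_{H^1}$, $\|rv\|_{H^1}$, $\|ru\|_{H^2}$ and $\|ru_t\|_{L^2}$. The constants $c_0$ and $T^*$ must be independent of the iterate, though they may depend on $\eta$.
\begin{itemize}
\item Transport terms are estimated by differentiating the equations in $r$ and using weighted $L^2$ energy estimates with weight $r^2$. Since $r\ge a>0$, the factors $1/r$ are harmless.
\item For $u$, multiply the equation by $r^2u_t$ and by $r^2(u_r+\frac2ru)_r$, and also differentiate it in $t$ to control $ru_t$ in $L^\infty_tL^2\cap L^2_tD^1$.
\item The estimates in $H^2$ and $L^2D^3$ are then recovered elliptically from $\delta h(u_r+\frac2ru)_r=u_t+\dots$, using $h\ge c>0$.
\end{itemize}
This gives $\Phi(t)\le C(1+\Phi_0)^N\exp(Ct\,\mathrm{poly}(c_0))$. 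Choosing $c_0$ large and $T^*$ small closes the bounds.

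\textbf{Step 3: contraction.} Take differences of two successive iterates in the lower norm
\[
\|r\cdot\|_{L^2}\text{ for }(\rho,h,\phi,v),\qquad L^\infty_tL^2\cap L^2_tD^1\text{ for }ru.
\]
The uniform bounds from Step 2 give a contraction for small $T^*$. Strong convergence in the low norm, combined with weak-$*$ compactness in the high norms, yields a limit solving \eqref{reform sys}. Uniqueness follows by the same difference estimate, and time continuity follows by the standard Lions--Magenes argument.

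\textbf{Main obstacle.} The hard step is consistency and closure for $h$ and $v$. The relation $v=u+\frac{\delta}{\delta-1}h_r$ is not imposed in the iteration. The estimates for $h_{rr}$ require $u_{rrr}$, which is only in $L^2_t$, so we must work with $rh_r\in L^\infty_tH^1$ using $\int_0^t\|ru\|_{D^3}$. Following the paper's hint, I would use the linear known function $\bar h$ in the coefficient, and verify a posteriori that $v-u-\frac{\delta}{\delta-1}h_r$ satisfies a homogeneous linear transport equation with zero initial data, so that it vanishes identically.
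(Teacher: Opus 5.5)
Your outline proves the statement as literally posed, and its skeleton coincides with the paper's. You linearize around a given velocity and solve the transport equations for $\rho,h,\phi,v$ along characteristics; no boundary data are needed at $r=a$ because the velocity vanishes there. You then close a priori bounds on a short interval, obtain strong convergence of the iterates in a low norm, and pass to the limit by weak-$*$ compactness. Beyond the statement, as in Section~\ref{sec 2.5}, you recover $v-u=\frac{\delta}{\delta-1}h_r$ from a homogeneous transport equation with zero data.

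The genuine difference is that you let everything depend on $\eta$. You use $h\le C_\eta$ for three things:
\begin{itemize}
\item to make the $u$-equation uniformly parabolic;
\item to handle the source terms $h\,w_{rrr}$ and $h(w_r+\frac2r w)_r$ in the $h_{rr}$ and $h_{rt}$ estimates;
\item to close the difference estimate for $\bar h^k$, namely the term $(\delta-1)h^{k-1}\bar u^{k-1}_r$.
\end{itemize}
Since the theorem only claims some $T^*>0$ for fixed $\eta$, this is legitimate, and it makes every step textbook.

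The paper instead works deliberately without $\|h\|_{L^\infty}$ and $\|rh_t\|_{L^2}$. Its $c_0$ contains $\|(\tilde h_0)^{-1}\|_{L^\infty}$ but not $\|\tilde h_0\|_{L^\infty}\sim\eta^{\delta-1}$. To achieve this, the paper:
\begin{itemize}
\item tests the $u$-equation with $r^2h^{-1}u_t$, so that only $h^{-1}\in L^\infty$ is used;
\item bounds $\|rh_{rt}\|_{L^2}$ through $\|h/H\|_{L^\infty}\|rH(U_r+\frac2rU)_r\|_{L^2}$, where $H$ is the known function given by the previous iterate $h^{k-1}$;
\item runs the contraction in the $h$-weighted quantities $\|\sqrt{h^k}\bar u^k\|_{L^2}$, $\|h^k\bar u^k_r\|_{L^2}$ and $\|h^k\frac2r\bar u^k\|_{L^2}$, building $\|h^{k-1}\bar u^{k-1}_r\|_{L^2}$ into the dissipation.
\end{itemize}
What this buys is an existence time and bounds independent of $\eta$. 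That is exactly what Section~\ref{sec 2.6} needs to let $\eta\to0$ and obtain Theorem~\ref{localthm}, so your version would have to be redone for that step.

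One small correction to your last paragraph: in the paper, $H$ never enters the diffusion coefficient. That coefficient is the solved $h$, as in your Step 1, and $H$ serves only in the $h_{rt}$ estimate above. In your $\eta$-dependent scheme neither device is needed, so whether you put $\bar h$ in the coefficient is immaterial. It is simply not what the ``linear known function'' hint refers to.
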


\begin{remark}
According to Theorem \ref{localthm2}, we can derive the local-in-time existence and uniqueness results stated in Theorem \ref{localthm}. For the detailed proof, see Sections \ref{sec 2.5} and \ref{sec 2.6}.
\end{remark}

\subsection{A linearized system with $\eta>0$}\label{sec2.1}
In this section, we consider the following linearized system,
\begin{equation}\label{lin sys}
	\left\{
	\begin{aligned}
		&\rho_t+\rho_r U+\rho U_r+\frac{2}{r}\rho U=0,\\
		&h_t+h_rU+(\delta-1)hU_r+\frac{2}{r}(\delta-1)hU=0,\\
		&\phi_t+\phi_rU+(\gamma-\delta)\phi U_r+\frac{2}{r}(\gamma-\delta)\phi U=0,\\
		&v_t+Uv_r+\frac{\gamma}{2\delta} \phi (v-U)=0,\\
		&u_t+UU_r+\frac{\gamma}{2\delta}\phi(v-U)=\delta h\left(u_r+\frac{2}{r}u\right)_r+\delta(v-U)u_r+(\delta-1)(v-U)\frac{2}{r}U,
	\end{aligned}
	\right.
\end{equation}
which is complemented by conditions (\ref{lin initial})-\eqref{lin boundary}, where $U(r,t)$ and $H(r,t)>0$ ($H(r,t)$ will be used in the proof of Lemma \ref{le:local rho W1p} later)  are given functions satisfying $U(r,0)=u_0$, $H(r,0)=\tilde h_0$ and $U(a,t)=0$. Note that the second equation of \eqref{lin sys}, together with $\tilde h_0 > 0$, implies $$h > 0$$. In addition, $(H,U)$ satisfy the following regularities: 
\begin{equation}\label{U}
\begin{split}
&rU \in C([0,T];H^2)\cap L^2([0,T];D^3),\ \ rU_t\in C([0,T];L^2)\cap L^2([0,T];D^1),\\
&r H_r \in L^\infty([0,T];H^1),\ \  r H_{rt} \in L^\infty([0,T];L^2),\ \  r \frac{H_t}{H}\in L^\infty([0,T];L^2), \ \  \frac{H_t}{H}\in L^\infty([0,T];L^\infty),\\
& rH\left(U_r+\frac{2}{r}U\right)_r \in L^\infty([0,T];L^2),
\end{split}
\end{equation}  for some $T>0$ which is to be determined later.

Under the condition $\tilde \rho_0 > \eta$, the linearized system \eqref{lin sys}–\eqref{U} is locally solvable, more precisely:
\begin{proposition}\label{prop1}
If the initial data satisfy
the conditions of Theorem \ref{localthm2} and the compatibility condition (\ref{compatibility condition local}), then (\ref{lin sys})-(\ref{U}) admits a unique strong solution satisfying 
\begin{equation*}
\begin{split}
&r(\rho-\eta) \in C([0, T];H^2),\ \  r\rho_t \in C([0, T];H^1),  \\
&   \phi \in C([0, T];L^\infty), \ \ r \phi_r \in C([0, T];H^1), \ \  r\phi_t \in C([0, T];H^1), \\
&h \in  C([0, T];L^\infty ),\ \ rh_r \in  C([0, T];H^1),\ \ rh_t\in C([0, T];H^1),  \\
&rv \in C([0,T];H^1),\ \ rv_t \in C([0, T];L^2),\\
&ru \in C([0,T];H^2) \cap L^2([0,T],D^3),\ \ ru_t \in C([0, T];L^2)\cap L^2([0,T],D^1),\\
\end{split}
\end{equation*}for any $T>0$. 
\end{proposition}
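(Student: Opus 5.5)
The linearized system \eqref{lin sys} is decoupled into a chain, and I will solve it in that order. The equations for $\rho$, $h$ and $\phi$ are linear transport equations driven by the given velocity $U$. Along the characteristics $X(t;r)$ defined by $\frac{d}{dt}X=U(X,t)$, $X(0)=r$, each of them is an ODE with an explicit exponential solution, for instance
$$
\rho(X(t;r),t)=\tilde\rho_0(r)\exp\Big(-\int_0^t \big(U_r+\tfrac{2}{X}U\big)(X(s;r),s)\,ds\Big).
$$
The characteristics are well defined and never leave $I_a$: by \eqref{U} and Sobolev embedding, $U$ and $U_r$ are bounded and Lipschitz, and $U(a,t)=0$ makes $r=a$ a fixed characteristic. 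Since $\int_0^T\|U_r+\frac{2}{r}U\|_{L^\infty}dt<\infty$, the formula gives two-sided bounds $\eta e^{-CT}\le\rho\le Ce^{CT}$. Similar bounds hold for $h$ and $\phi$, and in particular $h>0$.

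I then obtain the regularity $r(\rho-\eta)\in C([0,T];H^2)$, $r\phi_r,rh_r\in C([0,T];H^1)$, etc., by differentiating the transport equations in $r$ (once and twice), multiplying by $r^2\partial_r^k(\cdot)$, integrating over $I_a$, and applying Gronwall. The coefficients involve at most $\partial_r^2 U,\partial_r^3U$ with weight $r$, and these lie in $L^2_tL^2$ by $rU\in L^2(D^3)$. Time continuity follows by the usual argument: mollify $U$, solve, and pass to the limit using the uniform bounds, or use weak continuity together with continuity of the norms. The time derivatives $\rho_t,h_t,\phi_t$ are then read off from the equations.

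Next, the equation for $v$ is a linear transport equation with the damping/source term $\frac{\gamma}{2\delta}\phi(v-U)$, where $\phi$ is already known and bounded. It is solved again along characteristics. A weighted $H^1$ estimate for $rv$ follows from $L^2$ energy estimates on $v$ and $v_r$ with weight $r^2$. These use $\phi\in L^\infty$, $r\phi_r\in L^2$ and $rU\in H^2$, and then Gronwall. Finally, $rv_t\in C([0,T];L^2)$ comes directly from the equation.

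The last equation is a linear parabolic equation for $u$:
$$
u_t-\delta h\Big(u_r+\tfrac{2}{r}u\Big)_r-\delta(v-U)u_r=F,
$$
with $h$ known, satisfying $h\ge c(\eta,T)>0$, $h\in L^\infty$ and $rh_r\in H^1$. I expect this step to be the main obstacle. Existence follows from a Galerkin scheme or standard linear parabolic theory on the half-line with the Dirichlet condition \eqref{lin boundary}, since the operator is uniformly parabolic once $\eta>0$. The work lies in the a priori estimates, which I will take in three steps:
\begin{enumerate}
\item multiply by $r^2u$ to get $ru\in L^\infty L^2$ and $ru_r\in L^2L^2$;
\item multiply by $-r^2(u_r+\frac{2}{r}u)_r$ to get $ru\in L^\infty H^1$ and $r(u_r+\frac2ru)_r\in L^2L^2$;
\item differentiate in $t$ and multiply by $r^2u_t$ to get $ru_t\in L^\infty L^2\cap L^2D^1$. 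This step requires $u_t(0)\in L^2$, which is exactly where the compatibility condition \eqref{compatibility condition local} enters, since it gives $\delta\tilde h_0(u_{0,r}+\frac{2}{r}u_0)_r=\delta\tilde g\in L^2$.
\end{enumerate}
In the third step the commutator terms $h_tu_{rr}$ and $v_tu_r$ are controlled using $rh_t\in H^1$, $rv_t\in L^2$ and interpolation. The $H^2$ bound is then recovered elliptically from the equation, $h(u_r+\frac2ru)_r=(u_t-\cdots)/\delta$, using the lower bound on $h$. Differentiating once more in $r$ gives $ru\in L^2D^3$. The boundary terms at $r=a$ vanish because $u=0$ there, and the $\frac{2}{r}$ terms are harmless since $r\ge a>0$.

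Uniqueness follows because the system is linear. For the difference of two solutions, the same $L^2$ energy estimates, applied successively to each equation, together with Gronwall give zero. Since all bounds depend only on $T$, $\eta$ and the norms of $(U,H)$, the solution exists on any given $[0,T]$.
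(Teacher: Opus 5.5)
Your proposal is correct and is essentially the standard linear theory that the paper invokes without writing out (it only refers to \cite{Cho04, Cho-Kim06}). You solve the transport equations for $\rho$, $h$, $\phi$, $v$ along characteristics with weighted energy estimates, and then, because $\eta>0$ gives two-sided bounds on $h$, you treat the $u$-equation as a linear uniformly parabolic problem, handled by energy estimates at the levels of $u$, $u_r$, $u_t$ plus elliptic recovery of $ru\in L^\infty(H^2)\cap L^2(D^3)$.

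One small correction: step 3 needs $ru_t(0)\in L^2$. For fixed $\eta>0$ this already follows from $ru_0\in H^2$ and $\tilde h_0\le 2\eta^{\delta-1}$, whereas $\tilde g\in L^2$ alone would not give this weighted bound. So the compatibility condition \eqref{compatibility condition local} only becomes essential for the $\eta$-uniform estimates of Proposition \ref{prop2}.
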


\begin{remark}
For more details of the proof, please refer to \cite{Cho04, Cho-Kim06}. 
\end{remark}

In the next subsection, we will establish the uniform {\it a priori} estimates independent of $\eta$, $\Vert h \Vert_{ L^\infty}$ and $\Vert r h_t \Vert_{ L^2}$ for the unique solution $(\rho, h,\phi, v, u)$.

\subsection{The uniform {\it a priori} estimates}\label{sec 2.3}
In this subsection, we will derive a local-in-time solution of (\ref{lin sys}) with \eqref{U} and \eqref{lin initial}-\eqref{compatibility condition local} by constructing an iteration scheme with uniform bounds, and then passing to the limit. Now, we first choose a positive constant $c_0$ large enough so that

\begin{equation}\label{co}
\begin{split}
&1+\eta+\Vert (\tilde\phi_0,(\tilde h_0)^{-1})\Vert_{ L^\infty}+\Vert \tilde\rho_0\Vert_{ L^\infty}+
\Vert ru_0\Vert_{ H^2}+
\Vert r\tilde{v}_0\Vert_{ H^1}+\Vert r(\tilde\rho_0-\eta)\Vert_{ H^2}+\Vert r(\tilde\phi_{0,r},\tilde h_{0,r}) \Vert_{ H^1}\\
&+\Vert \tilde h_0 \tilde \phi_0\Vert_{ L^\infty}+\Vert r(\tilde h_0 \tilde \phi_0)_r\Vert_{ L^2}+\Vert \tilde g \Vert_{ L^2}< c_0.
\end{split}
\end{equation}

We assume that there exist positive constants $c_i$ ($i=1,2,3$) depending on $c_0$ and some known parameters, such that

\begin{equation}\label{c1,c2}
\begin{cases}
\displaystyle \sup_{0\leq t \leq T}\Vert r U(\cdot,t) \Vert_{H^1} \leq c_1,\\
\displaystyle \sup_{0\leq t \leq T}(\Vert rU_t(\cdot,t)\Vert_{ L^2}+\Vert r U(\cdot,t) \Vert_{D^2} )+\int_0^T \left(\Vert r U_{rrr}(\cdot,t)\Vert_{ L^2}^2+\Vert r U_{rt}(\cdot,t)\Vert_{ L^2}^2\right) dt\leq c_2,\\
\displaystyle\sup_{0\leq t \leq T}\left(\Vert rH(\cdot,t)_r \Vert_{H^1}+\left\Vert r 
\frac{H_t}{H}(\cdot,t) \right\Vert_{L^2 }+\Vert r H(U_r+\frac{2}{r}U)_r(\cdot,t)\Vert_{L^2}\right)\leq c_2,\\
\displaystyle\sup_{0\leq t \leq T}\left(\Vert rH(\cdot,t)_{rt} \Vert_{L^2}+\left\Vert 
\frac{H_t}{H}(\cdot,t) \right\Vert_{L^\infty}\right)\leq c_3,\\
\end{cases}
\end{equation}	for $1<c_0\leq c_1 \leq c_2<c_3< \infty$, where $c_1$, $c_2$, $c_3$ and $T$ are determined by \eqref{def ci}.

In the following proposition, we will establish some local-in-time uniform estimates. Let $C \ge1$
denotes a generic positive constant depending on $c_0$, $\frac{1}{a}$  and some other known constants but independent of $c_1$, $c_2$, $c_3$, $t$ and solutions. $C_\eta$ depends on $C$ and $\eta$.

\begin{proposition}\label{prop2}
We assume $(U,H)$ satisfies (\ref{c1,c2}). Then there exists a time $T\le (Cc_3)^{-5}$ such that the solution obtained in Proposition \ref{prop1} satisfies that
\begin{equation}\label{prop2-1}
\begin{cases}
\displaystyle	\Vert r u(\cdot,t) \Vert_{H^1} \leq c_1,\\
\displaystyle	\Vert r u_t(\cdot,t) \Vert_{L^2}+\Vert r u(\cdot,t) \Vert_{D^2} +\int_0^T \left(\Vert r u_{rrr}(\cdot,t)\Vert_{ L^2}^2+\Vert r u_{rt}(\cdot,t)\Vert_{ L^2}^2\right) dt\leq c_2,\\
\displaystyle \Vert rh(\cdot,t)_r \Vert_{H^1}+\Vert r 
	\frac{h_t}{h}(\cdot,t) \Vert_{L^2 }+\Vert r h(u_r+\frac{2}{r}u)_r(\cdot,t)\Vert_{L^2}\leq c_2,\\
\displaystyle	\Vert rh(\cdot,t)_{rt} \Vert_{L^2}+\Vert 
	\frac{h_t}{h}(\cdot,t) \Vert_{L^\infty}\leq c_3,
	\end{cases}
\end{equation}and
\begin{equation*}
	\begin{cases}
		\Vert r (\rho-\eta)(\cdot,t)\Vert_{ H^2}\leq C,\\
		\Vert r\rho_t(\cdot,t)\Vert_{ H^1}\leq Cc_2,
	\end{cases}
\end{equation*} for all $t\in [0,T]$.

\end{proposition}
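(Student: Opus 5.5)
We prove Proposition \ref{prop2} by the standard Cho--Kim bootstrap for linearized schemes. Every bound is first obtained in the form ``initial datum (bounded by $C c_0^k$) plus $C c_3^m t$'', or plus $C c_3^m t^{1/2}$. We then choose $c_1=Cc_0^{k_1}$, $c_2=Cc_1^{k_2}$, $c_3=Cc_2^{k_3}$, and shrink $T\le (Cc_3)^{-5}$ so that the time-dependent contributions are at most $1$. Throughout, $U$ and $H$ are known functions with the bounds \eqref{c1,c2}. The equations for $\rho,h,\phi$ are linear transport equations along the flow of $U$. The one for $v$ is a linear transport equation with damping $\frac{\gamma}{2\delta}\phi$. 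The one for $u$ is a linear parabolic equation with coefficient $\delta h$. The weight $r\ge a>0$ and the boundary condition $U(a,t)=0$ let us integrate by parts with no boundary terms from the transport parts. For $u$, the boundary term $h(u_r+\frac2r u)u|_{r=a}$ vanishes.

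\textbf{Step 1 (transport quantities).} Along characteristics, $\rho,h,\phi$ are given by the initial data multiplied by $\exp\big(\mp c\int_0^t (|U_r|+|U/r|)\big)$. Since $\|U_r\|_{L^\infty}+\|U/r\|_{L^\infty}\le Cc_2$, we get two-sided bounds $C^{-1}\le h/\tilde h_0\le C$, and similarly for $\rho$ and $\phi$, for $t\le T$. Differentiating once and twice in $r$ and doing weighted $L^2$ energy estimates with weight $r^2$ gives
\[
\frac{d}{dt}\|r(h_r,\rho_r,\phi_r)\|_{H^1}\le C\|rU\|_{H^2\cap D^3}\,\|r(h_r,\rho_r,\phi_r)\|_{H^1}+C\|rU\|_{D^3}.
\]
Gronwall then yields bounds of size $C$ after integrating $\int_0^T\|rU_{rrr}\|\le T^{1/2}c_2$. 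The quantities $h_t/h=-U\,h_r/h-(\delta-1)(U_r+\frac2rU)$ and $rh_{rt}$ are read off directly from the equation and bounded by $Cc_2^2\le c_3$; for $rh_{rt}$ this uses $rU_{rr}$. The identity $h_t/h$ is exactly why $H_t/H$ is part of the assumptions, and it only needs $\|h_r/h\|_{L^\infty}$. For $v$, multiplying by $r^2v$ and by $r^2 v_{rr}$ after differentiating gives $\|rv\|_{H^1}\le C$.

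\textbf{Step 2 (parabolic estimates for $u$).} Multiply the $u$-equation by $r^2u$ and integrate. After rewriting $h(u_r+\frac2r u)_r$ in divergence form, the diffusion gives a positive term comparable to $\int r^2 h(u_r^2+u^2/r^2)$, with commutator terms involving $h_r$ absorbed by Step 1. The source terms $UU_r$, $\phi(v-U)$ and $(v-U)U/r$ are bounded by $Cc_2^2$, and $(v-U)u_r$ uses $v\in L^\infty$ from the $H^1$ bound. This yields $\|ru\|_{H^1}\le C c_0+Cc_2^2 t\le c_1$.

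Next, differentiate the equation in $t$ and multiply by $r^2u_t$. The term $h_t(u_r+\frac2ru)_r$ is handled as $(h_t/h)\,h(u_r+\frac2ru)_r$, which is controlled by the $L^\infty$ bound on $h_t/h$. This gives
\[
\|ru_t\|_{L^2}^2+\int_0^t\|r\sqrt h\,u_{rt}\|^2\le C\|ru_t(0)\|^2+Cc_3^4t.
\]
Here $\|ru_t(0)\|$ is bounded by $Cc_0$ via the compatibility condition \eqref{compatibility condition local}. Since $h\ge C^{-1}\|\tilde\rho_0\|_{L^\infty}^{\delta-1}$ is bounded below independently of $\eta$, we obtain the bound on $\|ru_{rt}\|_{L^2_tL^2}$.

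\textbf{Step 3 (elliptic recovery).} Using the equation to solve for the diffusion term,
\[
\|rh(u_r+\tfrac2ru)_r\|_{L^2}\le \|ru_t\|+Cc_2^2,
\]
we get the $D^2$ bound on $ru$ after dividing by $h$, which is bounded below. For $\int_0^T\|ru_{rrr}\|^2$, differentiate the equation in $r$. The main obstacle is keeping every constant independent of $\eta$ and of $\|h\|_{L^\infty}$, because $h\sim\rho^{\delta-1}$ is unbounded as $\eta\to0$. So upper bounds on $h$ must never be used; only $1/h$, $h_r/h$, $h_t/h$ and $rh_r\in H^1$ may enter. I would handle this by writing every term with $h$ as a factor of the controlled quantity $h(u_r+\frac2ru)_r$ or through $h_t/h$, and dividing by $h$ only where the lower bound suffices. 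Once all powers are collected, the choice of $c_i$ and $T$ closes the bootstrap, giving \eqref{prop2-1} together with the bounds on $\rho$.
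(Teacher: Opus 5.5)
There is a genuine gap in Step~1, at exactly the point you later name as the main obstacle. Differentiating the $h$-equation in $r$ gives
\[
h_{rt}=-h_{rr}U-h_rU_r-(\delta-1)h_r\Big(U_r+\frac2rU\Big)-(\delta-1)\,h\Big(U_r+\frac2rU\Big)_r .
\]
The last term has a bare factor $h$ multiplying second derivatives of the \emph{given} field $U$. You estimate it by $\|h\|_{L^\infty}\|rU_{rr}\|_{L^2}$ (``for $rh_{rt}$ this uses $rU_{rr}$''). That only yields $C_\eta c_2$, because the only upper bound available is $h\le C_\eta$. The same factor appears in the source term of the $h_r$-equation. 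So your displayed inequality for $\|rh_r\|_{H^1}$, with inhomogeneity $C\|rU\|_{D^3}$, also silently uses $\|h\|_{L^\infty}$. None of the quantities you allow yourself ($1/h$, $h_r/h$, $h_t/h$, $rh_r$) can absorb this term. The elliptic bound on $rh(u_r+\frac2ru)_r$ cannot absorb it either, since that bound concerns $u$, not $U$.

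The missing idea is to compare $h$ with the given function $H$. Both start from $\tilde h_0$, so $h/H=1$ at $t=0$, and the ratio solves
\[
\Big(\frac hH\Big)_t+\frac hH\Big(\frac{h_r}{h}U+(\delta-1)\Big(U_r+\frac2rU\Big)+\frac{H_t}{H}\Big)=0 .
\]
Using $\|H_t/H\|_{L^\infty}\le c_3$, this gives $\|h/H\|_{L^\infty}\le C$ for $T\le (Cc_3)^{-2}$. Then
\[
\|rh(U_r+\tfrac2rU)_r\|_{L^2}\le \|h/H\|_{L^\infty}\,\|rH(U_r+\tfrac2rU)_r\|_{L^2}\le Cc_2 ,
\]
which gives $\|rh_{rt}\|_{L^2}\le Cc_2\le c_3$ uniformly in $\eta$.

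This is why $H_t/H$ and $rH(U_r+\frac2rU)_r$ appear in \eqref{c1,c2}. It is not because of the identity for $h_t/h$, which involves no $H$ at all. It is also why the output \eqref{prop2-1} must contain $\|rh(u_r+\frac2ru)_r\|_{L^2}\le c_2$: at the next iterate this becomes the hypothesis on $H$.

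A smaller point in Step~2. Testing the $u$-equation with $r^2u$ gives $\sup_t\|ru\|_{L^2}$ and the dissipation $\int_0^t\!\int r^2hu_r^2$, but not $\sup_t\|ru_r\|_{L^2}\le c_1$. The paper instead tests with $r^2u_t/h$. Then $\delta h(u_r+\frac2ru)_r$ becomes the exact derivative $-\frac\delta2\frac{d}{dt}\big(\|ru_r\|_{L^2}^2+2\|u\|_{L^2}^2\big)$, and all remaining terms need only $\|1/h\|_{L^\infty}$, never an upper bound on $h$.
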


The proof of Proposition \ref{prop2} consists of Lemma \ref{le:local rho W1p}-\ref{le:local u}.

\begin{lemma}\label{le:local rho W1p}
Let ($\rho$, $h$, $\phi$, $v$, $u$) be the solution to (\ref{lin sys}) and (\ref{lin initial}) on $I_a \times [0,T]$ which is stated in Proposition \ref{prop1} and $(U,H)$ satisfies (\ref{c1,c2}). Then the following estimates hold
\begin{equation}\label{local phi W1p 1}
\begin{cases}
\displaystyle \rho(\cdot,t)>0,\ \ h(\cdot,t)>\frac{1}{C},\\
\displaystyle \Vert \phi(\cdot,t)\Vert_{ L^\infty}+\Vert r (\rho-\eta) (\cdot,t)\Vert_{ H^2}+\Vert r h_r (\cdot,t)\Vert_{ H^1}+\Vert r\phi_r(\cdot,t) \Vert_{ H^1}\\
\displaystyle +\left\Vert \frac{h}{H}(\cdot,t)\right\Vert_{ L^\infty} +\Vert h\phi(\cdot,t) \Vert_{ L^\infty}
+\Vert r(h\phi)_r(\cdot,t) \Vert_{ L^2}\leq C,\\
\displaystyle \Vert r \rho_t (\cdot,t)\Vert_{ L^2}+\Vert r \phi_t (\cdot,t)\Vert_{ L^2}+\left\Vert r \frac{h_t}{h}(\cdot,t)\right\Vert_{ L^2}\leq Cc_1,\\
\displaystyle \Vert r \rho_{rt}(\cdot,t)\Vert_{ L^2}+\Vert r h_{rt}(\cdot,t)\Vert_{ L^2}+\Vert r \phi_{rt}(\cdot,t)\Vert_{ L^2}+\left\Vert  \frac{h_t}{h}(\cdot,t)\right\Vert_{ L^\infty}\leq Cc_2,
\end{cases}
\end{equation}
for all 
\begin{equation}\label{t1}
0\leq t \leq T \leq (Cc_2)^{-2}.
\end{equation}
\end{lemma}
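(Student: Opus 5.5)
The three transport equations in \eqref{lin sys} are linear in $(\rho,h,\phi)$ with the given velocity $U$, and $U(a,t)=0$. So I will argue along the characteristics $X(t;r)$ defined by $\frac{d}{dt}X=U(X,t)$, $X(0)=r$. Since $U(a,t)=0$, the boundary is invariant and every characteristic stays in $I_a$. Along $X$ each equation is a linear ODE:
\[
\frac{d}{dt}\rho(X,t)=-\rho\Big(U_r+\frac{2}{r}U\Big),\qquad \frac{d}{dt}h(X,t)=-(\delta-1)h\Big(U_r+\frac{2}{r}U\Big),
\]
and similarly for $\phi$ with factor $\gamma-\delta$. This gives explicit exponential formulas.

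By Sobolev embedding in one dimension and $r\ge a$, we have $\Vert U_r+\frac{2}{r}U\Vert_{L^\infty}\le C\Vert rU\Vert_{H^2}\le Cc_2$. Hence for $t\le (Cc_2)^{-2}$ the exponentials lie between $C^{-1}$ and $C$. This yields the positivity $\rho>0$ and $h\ge C^{-1}\tilde h_0\ge 1/C$ (using \eqref{co}), together with the bounds on $\Vert\phi\Vert_{L^\infty}$ and $\Vert h\phi\Vert_{L^\infty}$. The product $h\phi$ satisfies its own transport equation with factor $\gamma-1$, so it is handled the same way.

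For the ratio $h/H$ I will use the hypothesis $\Vert H_t/H\Vert_{L^\infty}\le c_3$ on the given function. I write $H(X,t)=\tilde h_0\exp(\int_0^t (H_t/H+UH_r/H)\,ds)$, which is bounded below once $t$ is small. I expect this to force a factor involving $c_3$ in the time restriction. To stay within \eqref{t1}, I will first try to use only $\Vert rH_r\Vert_{H^1}\le c_2$ and $\Vert rH_t/H\Vert_{L^2}\le c_2$ via Sobolev, which gives $\Vert H_t/H\Vert_{L^\infty}\le C c_2$ for $r\ge a$. If that fails, I will fall back on the $(Cc_3)^{-5}$ time of Proposition~\ref{prop2}.

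Next come the derivative estimates. I differentiate each transport equation in $r$, multiply by $r^2 \partial_r(\cdot)$ and integrate, and then repeat once more at the next order. For $\rho$ this means $(\rho-\eta)$ in $H^2$; the same is done for $h_r$, $\phi_r$ and $(h\phi)_r$. A typical step reads
\[
\frac{d}{dt}\Vert r h_r\Vert_{L^2}^2\le C\Vert (U,U_r,U_{rr})\Vert_{L^\infty\cap \text{weighted }L^2}\,\big(\Vert rh_r\Vert_{L^2}^2+\Vert h\Vert_{L^\infty}\Vert rh_r\Vert_{L^2}\big).
\]
For the second derivative, the term $h\,(U_r+\frac2rU)_{rr}$ involves $rU_{rrr}$, which is controlled only in $L^2_t$. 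Its time integral is then bounded by $C\sqrt{t}\,c_2\le C$. Boundary terms at $r=a$ arising from the transport integration vanish since $U(a,t)=0$, and the far-field terms vanish by the decay assumptions. Gronwall over $t\le (Cc_2)^{-2}$ then gives the bound $C$, independent of $c_i$.

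The time derivatives are read off directly from the equations. For example $h_t/h=-(h_r/h)U-(\delta-1)(U_r+\frac2rU)$, so $\Vert r h_t/h\Vert_{L^2}\le C\Vert rU\Vert_{H^1}\le Cc_1$ and $\Vert h_t/h\Vert_{L^\infty}\le Cc_2$. Differentiating once more in $r$ gives $\Vert r(\rho,h,\phi)_{rt}\Vert_{L^2}\le Cc_2$, using $\Vert rU\Vert_{D^2}\le c_2$.

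The main obstacle is that no $\eta$-dependence may enter. The weight $h$ grows like $\eta^{\delta-1}$ at infinity, so bounds on $h$ itself in $L^\infty$ must be avoided. Only $h_r$, $h_t/h$, $h\phi$ and the lower bound of $h$ may be used, and each transport estimate has to be arranged in logarithmic or ratio form, for instance estimating $h_r/h$ rather than $h$. This keeps the constants depending only on $c_0$.
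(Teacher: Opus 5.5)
There is a genuine gap, at the very point you name as the main obstacle. Your proposed remedy does not close it.

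Differentiating $(\ref{lin sys})_2$ in $r$ produces the term $(\delta-1)h\big(U_r+\frac{2}{r}U\big)_r$. It enters both the energy identity for $\Vert rh_r\Vert_{L^2}$ and the expression for $h_{rt}$. Your ``typical step'' bounds it using $\Vert h\Vert_{L^\infty}$. Your claim that $\Vert rh_{rt}\Vert_{L^2}\le Cc_2$ follows from $\Vert rU\Vert_{D^2}\le c_2$ silently does the same. But $h\to 2\eta^{\delta-1}$ in the far field, so this only yields $C_\eta c_2$.

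Switching to $h_r/h$ does not help. $(\log h)_r$ does satisfy a transport equation free of $h$. However, the lemma asks for $\Vert rh_r\Vert_{H^1}$ and $\Vert rh_{rt}\Vert_{L^2}$, and converting back from $h_r/h$ costs $\Vert h\Vert_{L^\infty}$ again.

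The missing idea is the reason $H$ appears at all. You prove $\Vert h/H\Vert_{L^\infty}\le C$ but never use it. The paper writes
\[
\Big\Vert rh\Big(U_r+\frac{2}{r}U\Big)_r\Big\Vert_{L^2}\le \Big\Vert\frac{h}{H}\Big\Vert_{L^\infty}\Big\Vert rH\Big(U_r+\frac{2}{r}U\Big)_r\Big\Vert_{L^2}\le Cc_2 ,
\]
using the hypothesis on $rH(U_r+\frac{2}{r}U)_r$ in \eqref{c1,c2}. This is what makes \eqref{lin hrt} uniform in $\eta$. The same bound is what a Gronwall argument for $\Vert rh_r\Vert_{L^2}$ needs.

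Your $h_{rr}$ step has the same defect. The estimate $\int_0^t\Vert rh(U_r+\frac{2}{r}U)_{rr}\Vert_{L^2}\,ds\le C\sqrt{t}\,c_2$ again uses $\Vert h\Vert_{L^\infty}$. You would need an $\eta$-uniform substitute there too; the paper itself does not spell this step out and defers to the 1D reference.

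A smaller point concerns $\Vert H_t/H\Vert_{L^\infty}$. Your first attempt to bound it by $Cc_2$ via Sobolev cannot work. It would need $\Vert (H_t/H)_r\Vert_{L^2}$, i.e.\ control of $\Vert rH_{rt}\Vert_{L^2}$, which is only bounded by $c_3$. Your fallback is what the paper actually does. Its bound on $h/H$ is obtained pointwise in $r$ from the ODE \eqref{equ of h/H}, rather than along characteristics. That argument uses $T\le (Cc_3)^{-2}$, even though the statement displays $(Cc_2)^{-2}$.
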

\begin{proof}
By virtue of $(\ref{lin sys})_1$, \eqref{co}, \eqref{c1,c2} and the standard method of characteristics, one can obtain that
\begin{equation}\label{lin rho H2}
	\Vert r (\rho-\eta) \Vert_{H^2}+\Vert \rho \Vert_{ L^\infty}\leq C, \ \ \rho(\cdot,t)>0,
\end{equation} and 
\begin{equation}\label{lin rhot}
	\begin{cases}
		 \Vert r\rho_t \Vert_{ L^2} \leq C(\Vert \rho_r \Vert_{ L^\infty}\Vert rU\Vert_{ L^2}+\Vert \rho \Vert_{ L^\infty}\Vert r U_r \Vert_{ L^2}+\Vert \rho \Vert_{ L^\infty}\Vert rU\Vert_{ L^2})\leq Cc_1,\\
		 \Vert r\rho_{rt} \Vert_{ L^2}\leq C(\Vert U \Vert_{ L^\infty}\Vert r \rho_{rr}\Vert_{ L^2}+\Vert \rho_r \Vert_{ L^\infty}\Vert rU_r\Vert_{ L^2}+\Vert \rho \Vert_{ L^\infty}\Vert r U_{rr}\Vert_{ L^2})\\
		 +C(\Vert \rho_r \Vert_{ L^\infty} \Vert r U\Vert_{ L^2}+\Vert \rho \Vert_{ L^\infty}\Vert rU_r \Vert_{ L^2})\leq Cc_2,
	\end{cases}
\end{equation} for all $0\leq t \leq T \leq (Cc_2)^{-2}$.

Similarly to \cite{W-Z SIAM 2025}, with the initial conditions \eqref{new ini} and \eqref{t1}, we also have that
\begin{equation}\label{lin h H2}
	\begin{cases}
		h>0,\ \phi>0,\ h\leq C_\eta,\\
		\Vert \phi \Vert_{ L^\infty}\leq C,\\
		\Vert r h_r \Vert_{ H^1}+\Vert r\phi_r \Vert_{ H^1}\leq C,\\
		\Vert r \phi_t \Vert_{L^2} \leq Cc_1.
	\end{cases}
\end{equation}

With $h>0$, we can multiply \eqref{lin sys}$_2$ by $-h^{-2}$, then we have
\begin{equation*}\label{lin equ of 1/h}
\left(\frac{1}{h}\right)_t+\left(\frac{1}{h}\right)_rU+(1-\delta)\frac{1}{h}U_r+\frac{2}{r}(1-\delta)\frac{1}{h}U=0,
\end{equation*}
similar to \eqref{lin sys}$_1$ and \eqref{lin rho H2}, we have
$$
 \left\Vert \frac{1}{h}\right\Vert_{ L^\infty}\leq \left\Vert \frac{1}{\tilde h_0} \exp\left((\delta-1)\int_0^t\left( U_r(r(\tau),\tau)+\frac{2}{r(\tau)}U(r(\tau),\tau)\right) d\tau \right)\right\Vert_{ L^\infty}\leq C,
$$ which yields that
\begin{equation}\label{lin ht/h}
	\begin{cases}
\displaystyle			\left\Vert r \frac{h_t}{h} \right\Vert_{ L^2}\leq C\left\Vert \frac{1}{h}\right\Vert_{ L^\infty}\Vert r h_r \Vert_{ L^2}\Vert U\Vert_{ L^\infty}+C\Vert r U_r \Vert_{ L^2}+C\Vert r U \Vert_{ L^2}\leq Cc_1,\\
\displaystyle			\left\Vert  \frac{h_t}{h}\right\Vert_{ L^\infty}\leq C\Vert \frac{1}{h}\Vert_{ L^\infty}\Vert h_r\Vert_{ L^\infty}\Vert U\Vert_{ L^\infty}+C\Vert U_r \Vert_{ L^\infty}+C\Vert U\Vert_{ L^\infty}\leq Cc_2,
	\end{cases}
\end{equation} for all $0\leq t \leq T \leq (Cc_2)^{-2}$.

Next, before we estimate $\Vert r h_{rt}\Vert_{ L^2}$, with \eqref{lin sys}$_2$ we have
$$
\left(\frac{h}{H}\right)_t=\frac{h_tH-H_th}{H^2}=\frac{h_t}{H}-\frac{H_t}{H}\frac{h}{H}=-\frac{h_r}{h}\frac{h}{H}U-(\delta-1)\frac{h}{H}\left(U_r+\frac{2}{r}U\right)-\frac{H_t}{H}\frac{h}{H},
$$ which yields that
\begin{equation}\label{equ of h/H}
	\left(\frac{h}{H}\right)_t+\frac{h}{H}\left(    \frac{h_r}{h}U+(\delta-1)(U_r+\frac{2}{r}U)+\frac{H_t}{H}   \right)=0,
\end{equation}and then, for all $0\leq t \leq T \leq (Cc_3)^{-2}$, we have
\begin{equation}\label{lin Linfty h/H}
	\left\Vert \frac{h}{H}\right\Vert_{ L^\infty}\leq C\left\Vert \frac{\tilde h_0}{H_0} \right\Vert_{ L^\infty}\exp\left( -\int_0^t \left(   \frac{1}{h} h_r U+(\delta-1)\left(U_r+\frac{2}{r}U\right)+\frac{H_t}{H}  \right)(s)ds \right)\leq C,
\end{equation} where we have used \eqref{co} and \eqref{c1,c2}. 

Then \eqref{c1,c2} and \eqref{lin Linfty h/H} yield that
\begin{equation}\label{lin hrt}
	\begin{aligned}
		\Vert r h_{rt}\Vert_{ L^2}\leq& C\Vert rh_{rr}\Vert_{ L^2}\Vert U\Vert_{ L^\infty}+\Vert h_r \Vert_{ L^\infty}\Vert r U_r \Vert_{ L^2}+C\Vert h_r \Vert_{ L^\infty}\Vert r U \Vert_{ L^2}\\
		&+C\left\Vert \frac{h}{H}\right\Vert_{ L^\infty}\left\Vert r H\left(U_r+\frac{2}{r}U\right)_r\right\Vert_{ L^2} \leq Cc_2.
	\end{aligned}
\end{equation}
Similarly, we have that $\Vert r \phi_{rt}\Vert_{ L^2}\leq Cc_2.$ Moreover, by virtue of \eqref{lin sys}$_{2,3}$, we obtain that
\begin{equation*}
	\begin{aligned}
		&(h \phi)_t=h_t\phi+h\phi_t=-\phi h_rU-(\delta-1)h \phi U_r-\frac{2}{r}(\delta-1)h \phi U\\
		&-h \phi_r U-(\gamma-\delta)h\phi U_r -\frac{2}{r}(\gamma-\delta) h\phi U,\\
	\end{aligned}
\end{equation*} i.e.
\begin{equation}\label{lin hphi-1}
 (h\phi)_t+(h \phi)_r U+(\gamma-1)(h \phi)\left(U_r+\frac{2}{r}U\right)=0,
\end{equation} together with \eqref{co} and \eqref{c1,c2}, we have
$$
\Vert h \phi \Vert_{ L^\infty}+\Vert r(h\phi)_r\Vert_{L^2}\leq C,
$$ for all $0\leq t \leq T \leq (Cc_3)^{-2}$. 

Then the proof of Lemma \ref{le:local rho W1p} is complete.
\end{proof}

\begin{lemma}\label{le:local v}
	Let $(\rho, h,\phi, v, u)$ be the solution to (\ref{lin sys}) and (\ref{lin initial}) on $I_a \times [0,T]$ which is stated in Proposition \ref{prop1} and $U$ satisfies (\ref{c1,c2}). Then the following estimates hold
	\begin{equation}\label{local v}
		\begin{cases}
			\Vert rv(\cdot,t) \Vert_{H^1}\leq C,\\
			\Vert rv(\cdot,t)_t \Vert_{L^2}\leq Cc_1,\\
		\end{cases}
	\end{equation}for all \begin{equation}\label{t2}
0\leq t \leq T\leq (Cc_3)^{-2}. 
	\end{equation}
\end{lemma}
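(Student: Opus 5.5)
We treat the linear transport equation $(\ref{lin sys})_4$,
$$
v_t+Uv_r+\frac{\gamma}{2\delta}\phi v=\frac{\gamma}{2\delta}\phi U,
$$
as a damped transport equation for $v$ with given coefficients $U$ and $\phi$, and run weighted $L^2$ energy estimates with weight $r^2$. We use the bounds of Lemma \ref{le:local rho W1p} on $\phi$, namely $\phi>0$, $\Vert\phi\Vert_{L^\infty}\le C$ and $\Vert r\phi_r\Vert_{H^1}\le C$, together with the bounds \eqref{c1,c2} on $U$. We also use the one-dimensional Sobolev inequality on $I_a$, $\Vert f\Vert_{L^\infty}\le C\Vert f\Vert_{H^1}$, applied to $f=rU$ or $f=U$; together with $r\ge a$ it gives $\Vert U\Vert_{L^\infty}+\Vert U/r\Vert_{L^\infty}\le Cc_1$ and $\Vert U_r\Vert_{L^\infty}\le Cc_2$.

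First, for the $L^2$ bound, multiply the equation by $r^2v$ and integrate over $I_a$. The convective term is handled by integration by parts. The boundary term vanishes at $r=a$ because $U(a,t)=0$, and at infinity because $rU\in H^1$ and $rv\in H^1$. This gives
$$
\int r^2Uvv_r=-\frac12\int (r^2U)_r v^2 \le C(\Vert U_r\Vert_{L^\infty}+\Vert U/r\Vert_{L^\infty})\Vert rv\Vert_{L^2}^2\le Cc_2\Vert rv\Vert_{L^2}^2 .
$$
The damping term $\frac{\gamma}{2\delta}\int r^2\phi v^2$ is nonnegative and can be dropped. The source term is bounded by $C\Vert\phi\Vert_{L^\infty}\Vert rU\Vert_{L^2}\Vert rv\Vert_{L^2}\le Cc_1\Vert rv\Vert_{L^2}$. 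Hence
$$
\frac{d}{dt}\Vert rv\Vert_{L^2}\le Cc_2\Vert rv\Vert_{L^2}+Cc_1 .
$$

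Next, for the derivative, differentiate the equation in $r$:
$$
v_{rt}+Uv_{rr}+U_rv_r+\frac{\gamma}{2\delta}\phi v_r+\frac{\gamma}{2\delta}\phi_r(v-U)-\frac{\gamma}{2\delta}\phi U_r=0 .
$$
Multiply by $r^2v_r$ and integrate. The terms $Uv_{rr}$ and $U_rv_r$ are controlled as in the first step by $Cc_2\Vert rv_r\Vert_{L^2}^2$, and the damping term again has a good sign. The only delicate term is $\phi_r v$, since $\phi_r$ is not bounded in $L^\infty$ a priori. Here we use $\Vert\phi_r\Vert_{L^\infty}\le C\Vert r\phi_r\Vert_{H^1}\le C$, which again follows from the 1D Sobolev inequality together with $r\ge a$, and obtain
$$
\int r^2|\phi_r v\, v_r|\le C\Vert rv\Vert_{L^2}\Vert rv_r\Vert_{L^2}.
$$
The terms $\phi_rU$ and $\phi U_r$ are bounded by $Cc_1\Vert rv_r\Vert_{L^2}$. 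Adding the two differential inequalities and applying Gronwall's inequality gives
$$
\Vert rv(t)\Vert_{H^1}\le \big(\Vert r\tilde v_0\Vert_{H^1}+Cc_1t\big)e^{Cc_2t}\le C
$$
on the time interval $t\le T\le (Cc_3)^{-2}$, since $c_1\le c_2\le c_3$ and $\Vert r\tilde v_0\Vert_{H^1}\le c_0$ by \eqref{co}.

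Finally, $\Vert rv_t\Vert_{L^2}$ is read off from the equation itself:
$$
\Vert rv_t\Vert_{L^2}\le \Vert U\Vert_{L^\infty}\Vert rv_r\Vert_{L^2}+C\Vert\phi\Vert_{L^\infty}\big(\Vert rv\Vert_{L^2}+\Vert rU\Vert_{L^2}\big)\le Cc_1 .
$$

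The main obstacle is making sure that no constant depends on $\eta$, on $\Vert h\Vert_{L^\infty}$, or on $c_2$ or $c_3$ outside the exponent. Every factor of $c_2$ enters only through $\Vert U_r\Vert_{L^\infty}$ inside the Gronwall exponent, where the smallness of $T$ absorbs it. The remaining point is to check that the initial quantity $\Vert r\tilde v_0\Vert_{H^1}$ is bounded by $c_0$ via \eqref{co}, and that the boundary terms in the integrations by parts vanish as claimed.
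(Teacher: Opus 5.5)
Your proposal is correct and follows essentially the same route as the paper. Both run weighted $L^2$ estimates for $v$ and for $v_r$ (after differentiating $(\ref{lin sys})_4$ in $r$), using $\Vert\phi\Vert_{L^\infty}+\Vert\phi_r\Vert_{L^\infty}\le C$ from Lemma \ref{le:local rho W1p}, then apply Gronwall on the short interval $T\le (Cc_3)^{-2}$, and finally read $\Vert rv_t\Vert_{L^2}\le Cc_1$ directly off the equation; your extra bookkeeping (dropping the nonnegative damping terms, checking boundary terms) only makes explicit what the paper's cruder bounds leave implicit.
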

\begin{proof}
Multiplying \eqref{lin sys}$_4$ by $r^2v$ and integrating by parts over $I_a$, we have
\begin{equation}\label{lin v L2}
	\begin{aligned}
		\frac{d}{dt}\Vert r v \Vert_{ L^2}^2\leq& C(\Vert U\Vert_{ L^\infty}+\Vert U_r\Vert_{ L^\infty}) \Vert r v \Vert_{ L^2}^2+C\Vert \phi \Vert_{ L^\infty}(\Vert rv \Vert_{ L^2}+\Vert r U \Vert_{ L^2})\Vert rv \Vert_{ L^2}\\
		\leq& Cc_2\Vert rv \Vert_{ L^2}^2+Cc_1^2.
	\end{aligned}
\end{equation} \eqref{t2}, \eqref{lin v L2} and Gronwall's inequality yield that
$$
\Vert rv \Vert_{ L^2}^2\leq C.
$$ 

Next, differentiating \eqref{lin sys}$_4$ with respect to $r$, we obtain that
\begin{equation}\label{lin equ vr}
	v_{rt}+U_rv_r+Uv_{rr}+C\phi_r(v-U)+C\phi(v_r-U_r)=0,
\end{equation} and then, multiplying \eqref{lin equ vr} by $r^2v_r$ and integrating by parts over $I_a$, we have
\begin{equation}\label{lin vrL2}
	\begin{aligned}
		\frac{d}{dt}\Vert rv_r \Vert_{ L^2}^2\leq& C\Vert U_r \Vert_{ L^\infty}\Vert r v_r \Vert_{ L^2}^2+C\Vert U \Vert_{ L^\infty} \Vert rv_r \Vert_{ L^2}^2\\
		&+C\Vert \phi_r \Vert_{ L^\infty}\Vert r(v-U)\Vert_{ L^2}\Vert rv_r\Vert_{ L^2}+C\Vert \phi \Vert_{ L^\infty}(\Vert rv_r \Vert_{ L^2}  +\Vert rU_r \Vert_{ L^2} )\Vert r v_r \Vert_{ L^2}\\
		\leq& Cc_2\Vert rv_r \Vert_{ L^2}^2+Cc_1^2,
	\end{aligned}
\end{equation} \eqref{t2}, \eqref{lin vrL2} and Gronwall's inequality yield that
$$
\Vert rv_r \Vert_{ L^2}^2\leq C.
$$ 

Finally, with \eqref{lin sys}$_4$, we have
$$
\Vert r v_t \Vert_{ L^2}\leq C\Vert U \Vert_{ L^\infty}\Vert r v_r \Vert_{ L^2}+C\Vert \phi \Vert_{ L^\infty}\Vert r(v-U)\Vert_{ L^2}\leq Cc_1.
$$
 
This completes the proof of Lemma \ref{le:local v}.
\end{proof}

\begin{lemma}\label{le:local u}
Let $(\rho, h,\phi, v, u)$ be the solution to (\ref{lin sys}) and (\ref{lin initial}) on $I_a \times [0,T]$ which is stated in Proposition \ref{prop1} and $(U,H)$ satisfies (\ref{c1,c2}). Then the following estimates hold
\begin{equation}\label{local u}
\begin{cases}
\displaystyle\Vert ru(\cdot,t) \Vert_{H^1}+\Vert ru(\cdot,t)_t \Vert_{L^2}+\int_0^t (\Vert r u_{rt}\Vert_{ L^2}^2+ \Vert r u_{rrr}\Vert_{ L^2}^2)(s)ds \leq C,\\
\displaystyle \Vert rh(u_r+\frac{2}{r}u)_r(\cdot,t) \Vert_{ L^2}+
\Vert r  u_{rr}(\cdot,t)\Vert_{ L^2} \leq Cc_1^2,
\end{cases}
\end{equation}for all 
\begin{equation}\label{t3}
0\leq t \leq T\leq (Cc_3)^{-5}.
\end{equation} 
\end{lemma}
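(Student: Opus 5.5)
We work directly with the linear parabolic equation $(\ref{lin sys})_5$ for $u$, regarding $h$, $\phi$, $v$, $U$ as known coefficients controlled by Lemmas \ref{le:local rho W1p}--\ref{le:local v} and \eqref{c1,c2}. Write it as
\[
u_t-\delta h\Big(u_r+\frac{2}{r}u\Big)_r=F:=-UU_r-\frac{\gamma}{2\delta}\phi(v-U)+\delta(v-U)u_r+\frac{2(\delta-1)}{r}(v-U)U .
\]
The plan has four steps: a first-order energy estimate for $\|ru\|_{H^1}$, a time-differentiated estimate for $\|ru_t\|_{L^2}$ and $\int\|ru_{rt}\|_{L^2}^2$, elliptic recovery of the second derivatives, and a last elliptic step for $u_{rrr}$. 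Throughout, the weight $r^2$ and $u(a,t)=0$ handle the boundary terms, and $r\ge a$ turns $\frac1r$ factors into harmless constants. We use $h\ge 1/C$ from Lemma \ref{le:local rho W1p} and never any upper bound on $h$, since the estimates must be independent of $\eta$.

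\textbf{Step 1: $\|ru\|_{H^1}$.} Multiply by $r^2u$ and integrate. Integrating the viscous term by parts gives $\delta\int r^2h\,|u_r+\frac2r u|^2$, up to commutator terms with $h_r$ and $\frac1r$. These are bounded by $\|rh_r\|_{L^\infty}$-type quantities and absorbed. Next, test with $-r^2(u_r+\frac2r u)_r$, equivalently with $r^2 u_t$. This gives
\[
\frac{d}{dt}\int r^2\Big|u_r+\frac2ru\Big|^2+\int r^2h\Big|\Big(u_r+\frac2ru\Big)_r\Big|^2\le C\int r^2\frac{|F|^2}{h}+\cdots .
\]
Since $1/h\le C$, the right-hand side is bounded by $C(c_1,c_2)$ plus $C\|v-U\|_{L^\infty}^2\|ru_r\|_{L^2}^2$. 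Gronwall on $[0,T]$ with $T\le (Cc_3)^{-5}$ then yields $\|ru\|_{H^1}\le C$, because the $c_i$-dependence is multiplied by a small power of $T$.

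\textbf{Step 2: $\|ru_t\|_{L^2}$ and $\int\|ru_{rt}\|^2$.} Differentiate the equation in $t$ and test with $r^2u_t$. The dangerous term is $\delta h_t(u_r+\frac2ru)_r$. We write it as $\delta\frac{h_t}{h}\cdot h(u_r+\frac2ru)_r$ and bound it by
\[
\Big\|\frac{h_t}{h}\Big\|_{L^\infty}\Big\|rh\Big(u_r+\frac2ru\Big)_r\Big\|_{L^2}\|ru_t\|_{L^2}.
\]
This is precisely why Lemma \ref{le:local rho W1p} supplies $\|\frac{h_t}{h}\|_{L^\infty}\le Cc_2$. The quantity $\|rh(u_r+\frac2ru)_r\|_{L^2}$ is, by the equation, at most $\|ru_t\|_{L^2}+\|rF\|_{L^2}$, so the estimate closes. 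The term $F_t$ involves $v_t,U_t,\phi_t,U_{rt},u_{rt}$; each is controlled by Lemma \ref{le:local v}, \eqref{c1,c2}, and absorption of $\|ru_{rt}\|^2$ into the dissipation $\int r^2 h|u_{rt}|^2\ge C^{-1}\|ru_{rt}\|^2$. The initial value $\|ru_t(0)\|_{L^2}$ is bounded through the compatibility condition \eqref{compatibility condition local}: $h(u_r+\frac2ru)_r|_{t=0}=\tilde g\in L^2$. Gronwall with small $T$ again gives a bound $C$.

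\textbf{Step 3: second derivatives.} Going back to the equation,
\[
\Big\|rh\Big(u_r+\frac2ru\Big)_r\Big\|_{L^2}\le \|ru_t\|_{L^2}+\|rF\|_{L^2}\le Cc_1^2,
\]
where the $c_1^2$ comes from $UU_r$ and $(v-U)U$. Using $h\ge1/C$ and $(u_r+\frac2ru)_r=u_{rr}+\frac2ru_r-\frac2{r^2}u$, we obtain $\|ru_{rr}\|_{L^2}\le Cc_1^2$.

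\textbf{Step 4: $\int\|ru_{rrr}\|^2$ (main obstacle).} Differentiate the equation in $r$ to get $\delta h(u_r+\frac2ru)_{rr}=u_{rt}-F_r-\delta h_r(u_r+\frac2ru)_r$. Dividing by $h\ge 1/C$ gives
\[
\int_0^t\|ru_{rrr}\|^2\le C\int_0^t\Big(\|ru_{rt}\|^2+\|rF_r\|^2+\|h_r\|_{L^\infty}^2\|ru_{rr}\|^2\Big).
\]
The hard point is $\|rF_r\|$. It contains $\phi_r(v-U)$, $v_ru_r$ and $(v-U)u_{rr}$, which are controlled by $\|rv\|_{H^1}$ and $\|rh_r\|_{H^1}$ but carry powers of $c_1,c_2$. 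These get multiplied by $t\le (Cc_3)^{-5}$, and the exponent $5$ in \eqref{t3} is chosen so that every $c_i$-dependent contribution collapses to $C$. The bookkeeping of these powers is the main obstacle.
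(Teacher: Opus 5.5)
Your proposal is correct and follows essentially the same route as the paper: the $r^2u$ energy estimate, an $H^1$ estimate, and the time-differentiated estimate. In the latter, $\delta h_t(u_r+\frac{2}{r}u)_r$ is handled through $\|\frac{h_t}{h}\|_{L^\infty}\|rh(u_r+\frac{2}{r}u)_r\|_{L^2}\|ru_t\|_{L^2}$ together with the equation, the compatibility condition bounds $\|ru_t(0)\|_{L^2}$, and $u_{rr}$ and $u_{rrr}$ are recovered elliptically, with all powers of $c_i$ absorbed by $T\le (Cc_3)^{-5}$. The only minor difference is your $H^1$ multiplier $-r^2(u_r+\frac{2}{r}u)_r$: it is equivalent to the paper's $r^2\frac{1}{h}u_t$, not to $r^2u_t$ as you wrote, because the $\frac{1}{h}$ weight is what strips $h$ from the energy $\int r^2|u_r+\frac{2}{r}u|^2=\|ru_r\|_{L^2}^2+2\|u\|_{L^2}^2$ and keeps it uniform in $\eta$.
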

\begin{proof}
The proof is divided into three steps.\\

\textbf{Step 1: the estimate on $\Vert r u \Vert_{L^2}$.}

 Multiplying  $(\ref{lin sys})_5$ by $2r^2u$, and integrating the result by parts over $I_a$, with the following fact
 \begin{equation*}
 	\begin{aligned}
 		&2\delta \int r^2 h u_{rr} u+2\delta \int  r^2 h(\frac{2}{r}u)_ru\\
 		=&-2\delta \int 2r h u_r u -2\delta \int r^2 h_r u_r u -2\delta \int r^2 h u_r^2+2\delta \int 2r h u_r u -4\delta  \int h u^2\\
 		=& -2\delta \int r^2 h_r u_r u -2\delta \int r^2 h u_r^2 -4\delta  \int h u^2,
 	\end{aligned}
 \end{equation*} we have
\begin{equation}\label{lin uL2}
	\begin{split}
		&\frac{d}{dt}\Vert ru \Vert_{L^2}^2+2\delta\int r^2 h u_r^2+4\delta\int hu^2=-2\int r^2 UU_r u+\frac{\gamma}{\delta}\int r^2 \phi (v-U) u-2\delta\int r^2 h_r u_r u\\
		&-2\delta\int r^2 (v-U) u_ru +4(\delta-1)\int r (v-U) Uu\\
		\leq&C\Vert U \Vert_{ L^\infty}\Vert rU_r\Vert_{ L^2}\Vert ru\Vert_{ L^2}+C\Vert \phi \Vert_{ L^\infty}\Vert r(v-U) \Vert_{ L^2}\Vert r u \Vert_{ L^2}+C\Vert \frac{1}{h} \Vert_{ L^\infty}\Vert h_r \Vert_{ L^\infty}^2 \Vert r u\Vert_{ L^2}^2\\
		&+C\Vert \frac{1}{h}\Vert_{ L^\infty}\Vert v-U \Vert_{ L^\infty}^2\Vert ru \Vert_{ L^2}^2+C\Vert (v-U)\Vert_{ L^\infty}\Vert r U \Vert_{ L^2}  \Vert r u \Vert_{ L^2}  +\varepsilon \int r^2 h u_r^2\\
		\leq& Cc_1^2\Vert ru \Vert_{ L^2}^2+Cc_1^4+\varepsilon \int r^2 h u_r^2,
	\end{split}
\end{equation} where we have used (\ref{c1,c2}), Lemma \ref{le:local rho W1p}, Young's inequality.

Together with \eqref{t3}, \eqref{lin uL2} and Gronwall's inequality, we obtain that
$$
\Vert r u \Vert_{ L^2}^2 \leq C.
$$

\textbf{Step 2: the estimate on $\Vert r u_r \Vert_{L^2}$.}\\
Multiplying \eqref{lin sys}$_5$ by $r^2\frac{1}{h}u_t$, and then integrating over $I_a$, with the fact that
\begin{equation*}
	\begin{aligned}
		&\delta \int h(u_r+\frac{2}{r}u)_r r^2 \frac{1}{h}u_t =\delta\int u_{rr}  r^2 u_t+\delta \int  \frac{2}{r}u_r r^2 u_t-2\delta\int uu_t\\
		=&-\delta\int u_{r}  r^2 u_{rt}-2\delta\int uu_t=-\frac{\delta}{2}\frac{d}{dt}\Vert r u_r \Vert_{ L^2}^2-\delta\frac{d}{dt} \Vert  u \Vert_{ L^2}^2.
	\end{aligned}
\end{equation*} we have
\begin{equation}\label{lin ur L2}
   \begin{aligned}
   	&\frac{\delta}{2}\frac{d}{dt}\Vert r u_r \Vert_{ L^2}^2+\delta\frac{d}{dt} \Vert  u \Vert_{ L^2}^2+\int r^2 \frac{1}{h}u_t^2\\
   	=&-\int r^2\frac{1}{h}u_t UU_r-\frac{\gamma}{2\delta}\int r^2\frac{1}{h}u_t \phi (v-U)-\delta\int r^2\frac{1}{h}u_t (v-U)u_r +2(\delta-1)\int r\frac{1}{h}u_t (v-U) U\\
   	\leq& \varepsilon \int r^2 \frac{1}{h} u_t^2+C\Vert r^2 \frac{1}{h} U^2 U_r^2\Vert_{ L^1}+C\Vert r^2 \frac{1}{h} \phi^2 (v-U)^2\Vert_{ L^1}+C\Vert r^2 \frac{1}{h} (v-U)^2 u_r^2\Vert_{ L^1}+C\Vert  \frac{1}{h} (v-U)^2 U^2\Vert_{ L^1}\\
   	\leq&\varepsilon \int r^2 \frac{1}{h} u_t^2+Cc_1^4+Cc_1^2\Vert r u_r \Vert_{ L^2}^2,
   \end{aligned}
\end{equation} where we have used Lemma \ref{le:local rho W1p}.

Together with \eqref{t3}, \eqref{lin ur L2} and Gronwall's inequality, we have
\begin{equation}\label{lin ur L2 finnal}
	\Vert ru_r \Vert_{ L^2}^2\leq C.
\end{equation}

\textbf{Step 3: the estimate on $\Vert r u_t \Vert_{L^2}$.}

Firstly, with \eqref{lin sys}$_5$, \eqref{lin ur L2 finnal} and Lemma \ref{le:local rho W1p}, we have
\begin{equation}\label{lin hurr}
	\begin{cases}
		\Vert r h(u_r +\frac{2}{r}u)_r \Vert_{ L^2}\\
		\leq C\Vert r u_t \Vert_{ L^2}+C\Vert U \Vert_{ L^\infty}\Vert r U_r \Vert_{ L^2}+C\Vert \phi \Vert_{ L^\infty}\Vert r (v-U) \Vert_{ L^2}
		+C\Vert (v-U) \Vert_{ L^\infty}(\Vert r u_r \Vert_{ L^2}+\Vert rU \Vert_{ L^2})\\
		\leq C\Vert r u_t \Vert_{ L^2}+Cc_1^2,\\[2mm]
		\Vert r u_{rr}\Vert_{ L^2}\\
		=\Vert r \frac{1}{h} \cdot h(u_r +\frac{2}{r}u-\frac{2}{r}u)_r \Vert_{ L^2}\leq C\Vert  r  h(u_r +\frac{2}{r}u)_r \Vert_{ L^2}+C\Vert r u_r \Vert_{ L^2}+C\Vert r u \Vert_{ L^2}\leq C\Vert r u_t \Vert_{ L^2}+Cc_1^2,\\[2mm]
		\Vert u_r \Vert_{ L^\infty}\leq C\Vert ru_r \Vert_{ L^2}+C\Vert r u_{rr}\Vert_{ L^2}\leq C\Vert r u_t \Vert_{ L^2}+Cc_1^2,
	\end{cases}
\end{equation} for all $0\leq t \leq T \leq (Cc_3)^{-4}$.

Secondly, differentiating \eqref{lin sys}$_5$ with respect to $t$, we obtain that
\begin{equation}\label{lin utt}
	\begin{aligned}
			&u_{tt}+(UU_r)_t+\frac{\gamma}{2\delta}\phi_t (v-U)+\frac{\gamma}{2\delta}\phi (v-U)_t=\delta h_t(u_r+\frac{2}{r}u)_r+\delta h(u_r+\frac{2}{r}u)_{rt}\\
		&+\delta (v-U)_t u_r+\delta (v-U)u_{rt}+(\delta-1) \frac{2}{r} (v-U)_tU+(\delta-1) \frac{2}{r} (v-U)U_t,
	\end{aligned}
\end{equation}
and multplying \eqref{lin utt} by $2r^2u_t$, and then integrating by parts over $I_a$, with the fact
\begin{equation}
	\begin{aligned}
		&2\delta\int r^2 h(u_r+\frac{2}{r}u)_{rt} u_t\\
		=&-2\delta \int 2r hu_{rt} u_t-2\delta\int r^2 h_r u_{rt} u_t-2\delta\int r^2 hu_{rt}^2+2\delta\int 2r h u_{rt} u_t-4\delta\int  hu_t u_t\\
		=&-2\delta\int r^2 h_r u_{rt} u_t-2\delta\int r^2 hu_{rt}^2-4\delta\int  hu_t^2.\\
	\end{aligned}
\end{equation} we arrive that
\begin{equation}\label{lin ut L2}
	\begin{aligned}
	&\frac{d}{dt}\Vert r u_t \Vert_{ L^2}^2+2\delta\int r^2 hu_{rt}^2+4\delta\int  hu_t^2\\
	=& 2\int r^2u_t \left( -(\frac{U^2}{2})_{rt}-\frac{\gamma}{2\delta}\phi_t (v-U)-\frac{\gamma}{2\delta}\phi (v-U)_t+\delta h_t(u_r+\frac{2}{r}u)_r \right) \\
	&+2\int r^2 u_t \left(\delta (v-U)_t u_r+\delta (v-U)u_{rt}+(\delta-1) \frac{2}{r} (v-U)_tU+(\delta-1) \frac{2}{r} (v-U)U_t \right)-2\delta\int r^2 h_r u_{rt} u_t\\
	\leq& C\Vert U \Vert_{ L^\infty}\Vert rU_t \Vert_{ L^2}\Vert r u_t \Vert_{ L^2}+C\Vert \frac{1}{h}\Vert_{ L^\infty}\Vert U \Vert_{ L^\infty}^2\Vert rU_t\Vert_{ L^2}^2+\varepsilon \int r^2hu_{rt}^2+C\Vert r \phi_t \Vert_{ L^2}\Vert (v-U) \Vert_{ L^\infty}\Vert r u_t \Vert_{ L^2}\\
	&+C\Vert \phi \Vert_{ L^\infty} \Vert r (v-U)_t\Vert_{ L^2}\Vert r u_t \Vert_{ L^2}+ C\Vert \frac{h_t}{h}\Vert_{ L^\infty}\Vert rh(u_r+\frac{2}{r}u)_r\Vert_{ L^2}\Vert r u_t\Vert_{ L^2}+\varepsilon \int r^2hu_{rt}^2 \\
	&+C\Vert \frac{1}{h}\Vert_{ L^\infty}\Vert h_r \Vert_{ L^\infty}^2\Vert r u_t \Vert_{ L^2}^2 
	+C\Vert r(v-U)_t\Vert_{ L^2}\Vert u_r \Vert_{ L^\infty}\Vert r u_t \Vert_{ L^2}+C\Vert \frac{1}{h}\Vert_{ L^\infty}\Vert r (v-U) \Vert_{ L^\infty}^2\Vert r u_t \Vert_{ L^2}^2 \\
	&+\varepsilon \int r^2hu_{rt}^2+ C\Vert r (v-U)_t\Vert_{ L^2}\Vert U \Vert_{ L^\infty}\Vert r u_t \Vert_{ L^2}
	+C\Vert  (v-U) \Vert_{ L^\infty}\Vert r U_t \Vert_{ L^2}\Vert r u_t \Vert_{ L^2} \\
	\leq&Cc_2^2\Vert ru_t \Vert_{ L^2}^2+Cc_2\Vert u_r\Vert_{ L^\infty}^2+C\Vert rh(u_r+\frac{2}{r}u)_r\Vert_{ L^2}^2+Cc_2^4+3\varepsilon\int r^2 h u_{rt}^2\\
	\leq&Cc_2^2\Vert ru_t \Vert_{ L^2}^2+Cc_2^5+3\varepsilon\int r^2 h u_{rt}^2,
	\end{aligned}
\end{equation} where we used Lemma \ref{le:local rho W1p}, \eqref{c1,c2}, \eqref{lin hurr}. 

\eqref{lin ut L2}, \eqref{t3} combining with the compatibility conditions \eqref{compatibility condition local}, we obtain that
$$
\Vert ru_t \Vert_{ L^2} +\int_0^t \Vert r  \sqrt{h} u_{rt}\Vert_{ L^2}^2 \leq C,
$$ which together with \eqref{lin hurr} mean that
\begin{equation}\label{lin hurr finnal}
	\begin{cases}
\displaystyle		\Vert rh(u_r+\frac{2}{r}u)_r \Vert_{ L^2}\leq Cc_1^2,\\
\displaystyle		\Vert r  u_{rr}\Vert_{ L^2} \leq Cc_1^2,\\
\displaystyle		\int_0^t \Vert r   u_{rt}\Vert_{ L^2}^2\leq C.
	\end{cases}
\end{equation}

Finally, by virtue of \eqref{lin sys}$_5$ and $h>0$, we have
\begin{equation}\label{lin urrr}
u_{rrr}\sim (\frac{1}{h} u_t)_r+(\frac{1}{h}  UU_r)_r  +(\frac{1}{h} \phi (v-U))_r+(\frac{2}{r}u)_{rr}+(\frac{1}{h} (v-U) u_r)_r+(\frac{1}{h}  (v-U) \frac{2}{r}U)_r,
\end{equation} and then, we obtain that
\begin{equation}\label{lin urrr-2}
	\begin{aligned}
		\int_0^t \Vert r u_{rrr}\Vert_{ L^2}^2(s) ds\leq \int_0^t (Cc_2^4+C\Vert r u_{rt}\Vert_{ L^2}^2)(s)ds \leq C,
	\end{aligned}
\end{equation} for all $0\leq t \leq T \leq (Cc_3)^{-5}$.

Then, the proof of Lemma \ref{le:local u} is complete.

\end{proof}

 \subsection*{Proof of Proposition \ref{prop2}}
Proposition \ref{prop2} can be given by Lemmas \ref{le:local rho W1p} and \ref{le:local u}.
In fact, choosing $(c_1, c_2, c_3)$ sufficiently large such that
\begin{equation}\label{def ci}
\begin{cases}
c_1\ge C,\\
c_2\ge C+Cc_1+Cc_1^2,\\
c_3\ge Cc_2,\\
T\leq (Cc_3)^{-5},
\end{cases}
\end{equation}then we prove Proposition \ref{prop2} completely.

\subsection{An iteration scheme and strong convergence with $\eta>0$}\label{sec2.3}
Consider the following approximation system:
\begin{equation}\label{approximate sys}
	\left\{
\begin{aligned}
	&\rho_t^k+\rho_r^k u^{k-1}+\rho^k u^{k-1}_r+\frac{2}{r}\rho^k u^{k-1}=0,\\
	&h_t^k+h_r^ku^{k-1}+(\delta-1)h^ku_r^{k-1}+\frac{2}{r}(\delta-1)h^ku^{k-1}=0,\\
	&\phi^k_t+\phi^k_ru^{k-1}+(\gamma-\delta)\phi^k u^{k-1}_r+\frac{2}{r}(\gamma-\delta)\phi^k u^{k-1}=0,\\
	&v^k_t+u^{k-1}v^k_r+\frac{\gamma}{2\delta}\phi^k(v^k-u^{k-1})=0,\\
	&u_t^k+u^{k-1}u_r^{k-1}+\frac{\gamma}{2\delta}\phi^k(v^k-u^{k-1})=\delta h^k(u_r^k+\frac{2}{r}u^k)_r+\delta (v^k-u^{k-1})u^k_r+(\delta-1) (v^k-u^{k-1})\frac{2}{r}u^{k-1},
\end{aligned}
\right.
\end{equation} (\ref{approximate sys}) is equipped with the following condition:
\begin{equation}\label{approximate initial}
\begin{cases}
(\rho^k, h^k, \phi^k, v^k, u^k)(r,0)=(\tilde \rho_0, \tilde h_0, \tilde \phi_0, \tilde{v}_0, u_0),\ \ r\in I_a,\\
u^k(r,t)|_{r=a}=0,\ \ t\ge0,\\
u^k \to 0   \ \ \text{as}\ \  r \to +\infty,\\
\end{cases}
\end{equation}
for $k=1,2,3,\cdot\cdot\cdot$, where $(\rho^0,h^0,\phi^0,v^0, u^0)=(\tilde \rho_0, \tilde h_0, \tilde \phi_0, \tilde v_0, u_0)$.

With Subsection \ref{sec 2.3}, we know that ($\rho^k$, $h^k$, $\phi^k$, $v^k$, $u^k$) enjoys the following estimates 

\begin{equation}\label{lin k-estimates-1}
	\begin{cases}
\displaystyle	\Vert r u^k(\cdot,t) \Vert_{H^1} \leq c_1,\ \ 
\displaystyle \Vert ru^k_t (\cdot,t)\Vert_{ L^2}+	\Vert r u^k(\cdot,t) \Vert_{D^2} +\int_0^T (\Vert r u^k_{rrr}\Vert_{ L^2}^2+\Vert r u^k_{rt}\Vert_{ L^2}^2)(\cdot,s)ds \leq c_2,\\
\displaystyle	\Vert rh^k(\cdot,t)_r \Vert_{H^1}+\Vert r 
	\frac{h^k_t}{h^k}(\cdot,t) \Vert_{L^2 }+\Vert r h^k(u^k_r+\frac{2}{r}u^k)_r(\cdot,t)\Vert_{L^2}\leq c_2,\\
\displaystyle	\Vert rh^k(\cdot,t)_{rt} \Vert_{L^2}+\Vert 
	\frac{h^k_t}{h^k}(\cdot,t) \Vert_{L^\infty}\leq c_3,
	\end{cases}
\end{equation} and
\begin{equation}\label{lin k-estimates-2}
	\begin{cases}
\displaystyle\Vert \phi^k (\cdot,t)\Vert_{ L^\infty}+\Vert r(\rho^k-\eta)(\cdot,t)\Vert_{ H^2}+\Vert r \phi^k_r (\cdot,t) \Vert_{ H^1}+\Vert \frac{h^k}{h^{k-1}}\Vert_{ L^\infty}\\
\displaystyle+\Vert r v^k(\cdot,t)\Vert_{ H^1}+\Vert h^k \phi^k \Vert_{ L^\infty} +\Vert r(h^k\phi^k)_r\Vert_{ L^2}\leq C\\
\displaystyle\Vert r v_t^k(\cdot,t) \Vert_{ L^2}+\Vert r\phi^k_t(\cdot,t) \Vert_{ L^2}+\Vert r\rho^k_t(\cdot,t)\Vert_{ L^2}\leq Cc_1,\\
\displaystyle\Vert r \rho^k_{rt}(\cdot,t)\Vert_{ L^2}+\Vert r \phi^k_{rt}(\cdot,t)\Vert_{ L^2}\leq Cc_2,\\
\displaystyle\Vert h^k (\cdot,t)\Vert_{ L^\infty}+\Vert r h^k_t(\cdot,t)\Vert_{ L^2}\leq C_\eta,
	\end{cases}
\end{equation} for all $0\leq t \leq T\leq (Cc_3)^{-5}$.

Then there exist a subsequence ($\rho^{k_i}$, $h^{k_i}$, $\phi^{k_i}$, $v^{k_i}$, $u^{k_i}$) and $r(\rho-\eta) \in L^\infty(0,T;H^2)$, $r\phi\in L^\infty(0,T;D^1 \cap D^2)$, $r h \in L^\infty(0,T;D^1\cap D^2)$,  $rv \in L^\infty(0,T;H^1)$, $ru \in L^\infty(0,T;H^2)\cap L^2(0,T;D^3)$ such that
\begin{equation}\label{weak conv}
\begin{split}
r(\rho^{k_i}-\eta)\rightharpoonup r(\rho-\eta)\ \ &\text{weakly*}\ \ \text{in}\ \  L^\infty(0,T;H^2),\\
(r\phi^{k_i},rh^{k_i})\rightharpoonup (r\phi,rh)  \ \ &\text{weakly*}\ \ \text{in}\ \  L^\infty(0,T;D^1 \cap D^2),\\
rv^{k_i}\rightharpoonup rv \ \ &\text{weakly*}\ \ \text{in}\ \  L^\infty(0,T;H^1),\\
ru^{k_i}\rightharpoonup ru \ \ &\text{weakly*}\ \ \text{in}\ \  L^\infty(0,T;H^2)\cap L^2(0,T;D^3),\\
(r\rho^{k_i}_t,rh^{k_i}_t,r\phi^{k_i}_t,rv^{k_i}_t)\rightharpoonup (r\rho_t,rh_t,r\phi_t, rv_t)\ \ &\text{weakly*}\ \ \text{in}\ \  L^\infty(0,T;L^2),\\
ru_t^{k_i}\rightharpoonup ru_t \ \ &\text{weakly*}\ \ \text{in}\ \  L^\infty(0,T;L^2)\cap L^2(0,T;D^1),\\
\end{split}
\end{equation} and ($\rho,h,\phi,v,u$) satisfies \eqref{lin k-estimates-1} and \eqref{lin k-estimates-2}.


To justify that the limit of ($\rho^{k_i}$, $h^{k_i}$, $\phi^{k_i}$, $v^{k_i}$, $u^{k_i}$) is the same as that of their neighbor subsequence ($\rho^{k_i-1}$, $h^{k_i-1}$, $\phi^{k_i-1}$, $v^{k_i-1}$, $u^{k_i-1}$) which appears in (\ref{approximate sys}), we are going to prove that the full sequence ($\rho^k$, $h^k$, $\phi^k$, $v^k$, $u^k$) converges strongly to the limit ($\rho$, $h$, $\phi$, $v$, $u$), which is the desired strong solution. Denote $\bar \rho^k=\rho^k-\rho^{k-1}$, $\bar h^k=h^k-h^{k-1}$, $\bar \phi^k=\phi^k-\phi^{k-1}$, $\bar v^k=v^k-v^{k-1}$, $\bar u^k=u^k-u^{k-1}$ and $\overline{h^k\phi^k}=h^k\phi^k-h^{k-1}\phi^{k-1}$ for $k\ge2$. 

In this subsection, we let $Cc_i=C$, and in order to obtain uniform estimates independent of $\eta$, we can not utilize the $\Vert h^k \Vert_{ L^\infty}$ and $\Vert rh^k_t \Vert_{ L^2}$ in \eqref{lin k-estimates-2}$_4$.

\begin{lemma}\label{conv of rho}
The following estimate:
\begin{equation}\label{bar rho L2}
\frac{d}{dt}(\Vert \bar \rho^k (t)\Vert_{L^2}^2+\Vert \bar \phi^k (t)\Vert_{L^2}^2+\Vert \bar v^k(t)\Vert_{ L^2}^2)\leq C(\Vert \bar \rho^k(t)\Vert_{L^2}^2 +\Vert \bar \phi^k(t)\Vert_{L^2}^2+C\Vert \bar v^k(t)\Vert_{ L^2}^2)+ \varepsilon\Vert \bar u^{k-1}(t)\Vert_{H^1}^2
\end{equation}
holds for any $0\leq t \leq T\le (Cc_3)^{-5}$.
\end{lemma}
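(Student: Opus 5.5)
We will prove the estimate by subtracting consecutive equations of the approximation system \eqref{approximate sys} for $\rho$, $\phi$ and $v$ at levels $k$ and $k-1$. This gives transport equations for $\bar\rho^k$, $\bar\phi^k$, $\bar v^k$ driven by $\bar u^{k-1}$. We then carry out plain $L^2$ energy estimates, using the uniform bounds \eqref{lin k-estimates-1}--\eqref{lin k-estimates-2} (with $Cc_i=C$) and avoiding $\Vert h^k\Vert_{L^\infty}$ and $\Vert rh^k_t\Vert_{L^2}$, which are not needed here. Concretely, the difference equations are
\begin{equation*}
\begin{aligned}
&\bar\rho^k_t+u^{k-1}\bar\rho^k_r+\bar\rho^k\Big(u^{k-1}_r+\frac{2}{r}u^{k-1}\Big)=-\rho^{k-1}_r\bar u^{k-1}-\rho^{k-1}\Big(\bar u^{k-1}_r+\frac{2}{r}\bar u^{k-1}\Big),\\
&\bar\phi^k_t+u^{k-1}\bar\phi^k_r+(\gamma-\delta)\bar\phi^k\Big(u^{k-1}_r+\frac{2}{r}u^{k-1}\Big)=-\phi^{k-1}_r\bar u^{k-1}-(\gamma-\delta)\phi^{k-1}\Big(\bar u^{k-1}_r+\frac{2}{r}\bar u^{k-1}\Big),\\
&\bar v^k_t+u^{k-1}\bar v^k_r+\frac{\gamma}{2\delta}\phi^k\bar v^k=-\bar u^{k-1}v^{k-1}_r-\frac{\gamma}{2\delta}\bar\phi^k(v^{k-1}-u^{k-2})+\frac{\gamma}{2\delta}\phi^k\bar u^{k-1}.
\end{aligned}
\end{equation*}

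Multiply each equation by $2\bar\rho^k$, $2\bar\phi^k$ and $2\bar v^k$ respectively and integrate over $I_a$ (no $r^2$ weight, matching the statement). The transport terms become $-\int u^{k-1}_r|\bar\rho^k|^2$ after integration by parts. The boundary term at $r=a$ vanishes because $u^{k-1}(a,t)=0$, and the term at infinity vanishes by decay. These terms, together with the zeroth-order terms, are bounded by $C(\Vert u^{k-1}\Vert_{L^\infty}+\Vert u^{k-1}_r\Vert_{L^\infty})\Vert\bar\rho^k\Vert_{L^2}^2\le C\Vert\bar\rho^k\Vert_{L^2}^2$, using $r\ge a$ and the $H^2$ control of $ru^{k-1}$. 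The term $\frac{\gamma}{\delta}\int\phi^k|\bar v^k|^2$ is nonnegative and can be dropped.

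For the source terms we use Young's inequality. For example,
$$2\Big|\int\rho^{k-1}\bar u^{k-1}_r\bar\rho^k\Big|\le\varepsilon\Vert\bar u^{k-1}_r\Vert_{L^2}^2+C_\varepsilon\Vert\rho^{k-1}\Vert_{L^\infty}^2\Vert\bar\rho^k\Vert_{L^2}^2,$$
and similarly for $\rho^{k-1}_r\bar u^{k-1}$ and $\frac2r\rho^{k-1}\bar u^{k-1}$. Here $\Vert\rho^{k-1}\Vert_{W^{1,\infty}}\le C$ follows from $\Vert r(\rho^{k-1}-\eta)\Vert_{H^2}\le C$ and $r\ge a$. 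The analogous bounds for $\phi$ use $\Vert\phi^{k-1}\Vert_{L^\infty}\le C$ and $\Vert\phi^{k-1}_r\Vert_{L^\infty}\le C\Vert r\phi^{k-1}_r\Vert_{H^1}\le C$.

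In the $\bar v$ equation, $\bar u^{k-1}v^{k-1}_r$ is the only delicate term, because $v^{k-1}_r$ is controlled only in $L^2$ (via $\Vert rv^{k-1}\Vert_{H^1}\le C$). We handle it with the $L^\infty$ bound on $\bar u^{k-1}$ coming from one-dimensional Sobolev embedding:
$$\Big|\int\bar u^{k-1}v^{k-1}_r\bar v^k\Big|\le\Vert\bar u^{k-1}\Vert_{L^\infty}\Vert v^{k-1}_r\Vert_{L^2}\Vert\bar v^k\Vert_{L^2}\le\varepsilon\Vert\bar u^{k-1}\Vert_{H^1}^2+C_\varepsilon\Vert\bar v^k\Vert_{L^2}^2.$$
The term $\bar\phi^k(v^{k-1}-u^{k-2})$ is bounded by $C(\Vert\bar\phi^k\Vert_{L^2}^2+\Vert\bar v^k\Vert_{L^2}^2)$, since $\Vert v^{k-1}\Vert_{L^\infty}+\Vert u^{k-2}\Vert_{L^\infty}\le C$ by the $H^1$ bounds. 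Summing the three estimates gives \eqref{bar rho L2}, with $\varepsilon$ arbitrary and the constant depending on $\varepsilon$.
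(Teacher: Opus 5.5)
Your proposal is correct and follows the paper's proof. Like the paper, you subtract consecutive levels of the $\rho$, $\phi$, $v$ transport equations, test with the differences in unweighted $L^2$, and use the uniform bounds with Young's inequality to split off $\varepsilon\Vert\bar u^{k-1}\Vert_{H^1}^2$. Your handling of $\bar u^{k-1}v^{k-1}_r$ through $\Vert\bar u^{k-1}\Vert_{L^\infty}\le C\Vert\bar u^{k-1}\Vert_{H^1}$ correctly fills in the $\bar v$ case, which the paper only covers with the word ``similarly''.
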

\begin{proof}We obtain from (\ref{approximate sys})$_1$ that
\begin{equation}\label{bar rho}
\bar\rho^k_t+\bar \rho^k_r u^{k-1}+ \rho^{k-1}_r \bar u^{k-1}+\bar \rho^k u_r^{k-1}+\rho^{k-1} \bar u_r^{k-1}+\frac{2}{r}\bar \rho^k u^{k-1}+\frac{2}{r} \rho^{k-1} \bar u^{k-1}=0
\end{equation}

Multiplying $(\ref{bar rho})$ by $\bar \rho^k$, integrating the result by parts over $I_a$, we have
\begin{equation*}
\begin{aligned}
\frac{d}{dt}\Vert \bar \rho^{k}\Vert_{L^2}^2\leq& C\Vert \bar \rho^k\Vert_{ L^2}^2\Vert u^{k-1}_r\Vert_{ L^\infty}+C\Vert \rho_r^{k-1}\Vert_{ L^\infty}\Vert \bar \rho^k \Vert_{ L^2}\Vert \bar u^{k-1}\Vert_{L^2}\\
&+C\Vert \rho^{k-1}\Vert_{ L^\infty}\Vert \bar \rho^k\Vert_{ L^2}\Vert \bar u^{k-1}_r\Vert_{ L^2}+C\Vert \bar \rho^k\Vert_{ L^2}^2\Vert u^{k-1} \Vert_{ L^\infty}+C\Vert \rho^{k-1}\Vert_{ L^\infty}\Vert \bar \rho^k \Vert_{ L^2}\Vert \bar u^{k-1}\Vert_{ L^2}\\
\leq& C\Vert \bar \rho^k \Vert_{ L^2}^2+\varepsilon\Vert \bar u^{k-1} \Vert_{ H^1}^2.
\end{aligned}
\end{equation*} 

Similarly, we have
\begin{equation*}
	\begin{cases}
\displaystyle		\frac{d}{dt}\Vert \bar \phi^{k}\Vert_{L^2}^2\leq C\Vert \bar \phi^k \Vert_{ L^2}^2+\varepsilon\Vert \bar u^{k-1} \Vert_{ H^1}^2,\\[3mm]
\displaystyle		\frac{d}{dt}\Vert \bar v^k \Vert_{L^2}^2\leq C\Vert \bar v^k \Vert_{ L^2}^2+C\Vert \bar \phi^k \Vert_{ L^2}^2+\varepsilon \Vert \bar u^{k-1}\Vert_{ H^1}^2.\\
	\end{cases}
\end{equation*}

Then we complete the proof of Lemma \ref{conv of rho}. 
\end{proof}

\begin{lemma}\label{conv of h}
The following estimate:
\begin{equation}
	\begin{aligned}
&\frac{d}{dt}\Vert \bar h^k(t) \Vert_{L^2}^2\leq C \Vert \bar h^k(t) \Vert_{L^{2}}^2+\varepsilon(\Vert \bar u^{k-1}(t)\Vert_{H^1}^2+  \Vert h^{k-1}\bar u^{k-1}_r(t) \Vert_{ L^2}^2+    \Vert h^{k-1}\frac{2}{r}\bar u^{k-1}(t)\Vert_{ L^2}^2      )
\end{aligned}
\end{equation}
holds for any $0\leq t \leq T\le (Cc_3)^{-5}$.
\end{lemma}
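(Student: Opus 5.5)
We start from the difference equation for $\bar h^k=h^k-h^{k-1}$. Subtracting the $(k-1)$-th from the $k$-th version of (\ref{approximate sys})$_2$ gives
\begin{equation*}
\bar h^k_t+\bar h^k_r u^{k-1}+h^{k-1}_r\bar u^{k-1}+(\delta-1)\bar h^k\Big(u^{k-1}_r+\frac{2}{r}u^{k-1}\Big)+(\delta-1)h^{k-1}\Big(\bar u^{k-1}_r+\frac{2}{r}\bar u^{k-1}\Big)=0 .
\end{equation*}
We multiply by $2\bar h^k$ and integrate over $I_a$. The transport term becomes $-\int (u^{k-1})_r(\bar h^k)^2$ after integrating by parts; the boundary term vanishes since $u^{k-1}(a,t)=0$ and the decay at infinity. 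Hence this term, and the $(\delta-1)\bar h^k(u^{k-1}_r+\frac{2}{r}u^{k-1})$ term, are bounded by $C(\|u^{k-1}_r\|_{L^\infty}+\|u^{k-1}\|_{L^\infty})\|\bar h^k\|_{L^2}^2\le C\|\bar h^k\|_{L^2}^2$. Here we use $\|ru^{k-1}\|_{H^2}\le C$ from (\ref{lin k-estimates-1}), with $Cc_i=C$, together with $r\ge a$ and the 1D embedding $H^1\hookrightarrow L^\infty$.

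For the coupling term $\int h^{k-1}_r\bar u^{k-1}\bar h^k$, we bound $\|h^{k-1}_r\|_{L^\infty}\le C\|rh^{k-1}_r\|_{H^1}\le C$ by (\ref{lin k-estimates-1}). Young's inequality then gives $C\|\bar h^k\|_{L^2}^2+\varepsilon\|\bar u^{k-1}\|_{L^2}^2$.

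The key point is the last term, $2(\delta-1)\int h^{k-1}(\bar u^{k-1}_r+\frac{2}{r}\bar u^{k-1})\bar h^k$. We must not use $\|h^{k-1}\|_{L^\infty}$, which is only bounded by $C_\eta$. We therefore keep the weight $h^{k-1}$ attached to $\bar u^{k-1}$ and apply Young's inequality:
\begin{equation*}
\Big|2(\delta-1)\int h^{k-1}\Big(\bar u^{k-1}_r+\frac{2}{r}\bar u^{k-1}\Big)\bar h^k\Big|\le C\|\bar h^k\|_{L^2}^2+\varepsilon\Big(\|h^{k-1}\bar u^{k-1}_r\|_{L^2}^2+\Big\|h^{k-1}\frac{2}{r}\bar u^{k-1}\Big\|_{L^2}^2\Big).
\end{equation*}
This produces exactly the weighted quantities in the statement. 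Those terms will later be absorbed by the dissipation $\int h^{k}|\bar u^k_r|^2$-type terms from the $\bar u^k$ energy estimate, possibly after relating $h^{k-1}$ and $h^k$ via the bound on $\|h^k/h^{k-1}\|_{L^\infty}$ in (\ref{lin k-estimates-2}). Since $C$ depends on $\varepsilon$, collecting the bounds yields the claimed differential inequality for $0\le t\le T\le (Cc_3)^{-5}$.

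The only delicate step is this $\eta$-uniformity: every factor of $h^{k-1}$ must either be differentiated, and so controlled by $\|rh^{k-1}_r\|_{H^1}$, or be left inside the $\varepsilon$-weighted $\bar u^{k-1}$ norms. No $L^\infty$ bound on $h$ may enter $C$. The remaining terms are routine.
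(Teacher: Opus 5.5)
Your proposal is correct and matches the paper's proof: the same difference equation for $\bar h^k$, testing against $\bar h^k$, bounding the transport and zeroth-order terms by $\|u^{k-1}\|_{L^\infty}+\|u^{k-1}_r\|_{L^\infty}$, and keeping $h^{k-1}$ attached to $\bar u^{k-1}_r$ and $\frac{2}{r}\bar u^{k-1}$ in Young's inequality so that the $\eta$-dependent bound $\|h^{k-1}\|_{L^\infty}\le C_\eta$ never enters. The only cosmetic difference is in the coupling term $\int h^{k-1}_r\bar u^{k-1}\bar h^k$: you use $\|h^{k-1}_r\|_{L^\infty}\|\bar u^{k-1}\|_{L^2}$, while the paper uses $\|rh^{k-1}_r\|_{L^2}\|\bar u^{k-1}\|_{L^\infty}$, and both are fine since $r\ge a>0$.
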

\begin{proof}
We obtain from (\ref{approximate sys})$_2$ that
\begin{equation}\label{equ of bar h}
	\begin{aligned}
&\bar h_t^k+\bar h^k_r u^{k-1}+h^{k-1}_r\bar u^{k-1}
+(\delta-1)\left( \bar h^k u^{k-1}_r+h^{k-1}\bar u^{k-1}_r \right)+\frac{2(\delta-1)}{r}\left(  \bar h^k u^{k-1}+h^{k-1}\bar u^{k-1}  \right)=0
	\end{aligned}
\end{equation}

Multiplying (\ref{equ of bar h}) by $\bar h^k$ and integrating the result over $I_a$, we have
\begin{equation*}
\begin{split}
\frac{d}{dt}\Vert \bar h^k \Vert_{L^{2}}^2\leq&C\Vert \bar h^k \Vert_{ L^2}^2\Vert u^{k-1}_r\Vert_{ L^\infty}+C\Vert r h^{k-1}_r \Vert_{ L^2}\Vert \bar u^{k-1}\Vert_{ L^\infty}\Vert \bar h^k \Vert_{ L^2}
+C\Vert h^{k-1}\bar u^{k-1}_r \Vert_{ L^2}\Vert \bar h^k \Vert_{ L^2}\\
&+C\Vert \bar h^k \Vert_{ L^2}^2 \Vert u^{k-1}\Vert_{ L^\infty}+C\Vert \bar h^k\Vert_{ L^2}\Vert h^{k-1}\frac{2}{r}\bar u^{k-1}\Vert_{ L^2}
\\
\leq&C\Vert \bar h^k \Vert_{L^{2}}^2+\varepsilon(\Vert \bar u^{k-1}\Vert_{H^1}^2+  \Vert h^{k-1}\bar u^{k-1}_r \Vert_{ L^2}^2+    \Vert h^{k-1}\frac{2}{r}\bar u^{k-1}\Vert_{ L^2}^2      ),
\end{split}
\end{equation*}
where we have used Young's inequality and \eqref{lin k-estimates-1}.

The proof of Lemma $\ref{conv of h}$ is complete.
\end{proof}

\begin{lemma}\label{conv of hphi}
	The following estimate:
	\begin{equation}\label{bar hphi L2}
		\begin{split}
			&\frac{d}{dt}\Vert \overline {h^k\phi^k}(t) \Vert_{L^{2}}^2\leq C \Vert \overline{h^k\phi^k}(t)\Vert_{ L^2}^2+\varepsilon\Vert \bar u^{k-1}(t)\Vert_{ H^1}^2
		\end{split}
	\end{equation}
	holds for any $0\leq t \leq T\le (Cc_3)^{-5}$.
\end{lemma}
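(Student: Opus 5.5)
The estimate will follow the proof of Lemma \ref{conv of rho}, using the transport equation for $h\phi$ in place of those for $h$ and $\phi$ separately. By \eqref{approximate sys}$_{2,3}$, exactly as in the derivation of \eqref{lin hphi-1}, the product $w^k:=h^k\phi^k$ satisfies
\begin{equation*}
w^k_t+w^k_r u^{k-1}+(\gamma-1)w^k\Big(u^{k-1}_r+\frac{2}{r}u^{k-1}\Big)=0 .
\end{equation*}
This is the key point. Although $h^k$ alone is bounded only by $C_\eta$, the product $w^k=2(\rho^k)^{\gamma-1}$ is controlled uniformly: \eqref{lin k-estimates-2} gives $\Vert h^k\phi^k\Vert_{L^\infty}+\Vert r(h^k\phi^k)_r\Vert_{L^2}\le C$ with $C$ independent of $\eta$.

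Subtracting the equations for indices $k$ and $k-1$ gives
\begin{equation*}
\overline{h^k\phi^k}_t+\overline{h^k\phi^k}_r u^{k-1}+w^{k-1}_r\bar u^{k-1}+(\gamma-1)\Big[\overline{h^k\phi^k}\Big(u^{k-1}_r+\frac{2}{r}u^{k-1}\Big)+w^{k-1}\Big(\bar u^{k-1}_r+\frac{2}{r}\bar u^{k-1}\Big)\Big]=0 .
\end{equation*}
Multiply by $2\overline{h^k\phi^k}$ and integrate over $I_a$. The boundary term from the convective part vanishes at $r=a$ because $u^{k-1}(a)=0$, and it vanishes at infinity by decay. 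Integrating by parts, the convective term therefore contributes at most $\Vert u^{k-1}_r\Vert_{L^\infty}\Vert\overline{h^k\phi^k}\Vert_{L^2}^2$. The terms linear in $\overline{h^k\phi^k}$ are bounded by $C(\Vert u^{k-1}_r\Vert_{L^\infty}+\Vert u^{k-1}\Vert_{L^\infty})\Vert\overline{h^k\phi^k}\Vert_{L^2}^2\le C\Vert\overline{h^k\phi^k}\Vert_{L^2}^2$, using \eqref{lin k-estimates-1} and Sobolev embedding on $I_a$ with $r\ge a$.

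Two source terms remain. First, since $r\ge a$,
\begin{equation*}
\int |w^{k-1}_r\bar u^{k-1}\overline{h^k\phi^k}|\le C\Vert r w^{k-1}_r\Vert_{L^2}\Vert\bar u^{k-1}\Vert_{L^\infty}\Vert\overline{h^k\phi^k}\Vert_{L^2}\le C\Vert \bar u^{k-1}\Vert_{H^1}\Vert\overline{h^k\phi^k}\Vert_{L^2}.
\end{equation*}
Second, since $\Vert w^{k-1}\Vert_{L^\infty}\le C$,
\begin{equation*}
\int |w^{k-1}(\bar u^{k-1}_r+\tfrac{2}{r}\bar u^{k-1})\overline{h^k\phi^k}|\le C\Vert\bar u^{k-1}\Vert_{H^1}\Vert\overline{h^k\phi^k}\Vert_{L^2}.
\end{equation*}
Young's inequality turns both into $\varepsilon\Vert\bar u^{k-1}\Vert_{H^1}^2+C_\varepsilon\Vert\overline{h^k\phi^k}\Vert_{L^2}^2$, which yields \eqref{bar hphi L2}.

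The only delicate point is making sure every bound used is uniform in $\eta$. This requires working with the product $h\phi$ rather than splitting it as $h\cdot\phi$. It also requires relying on the $\eta$-independent bounds on $w^{k-1}$ and $rw^{k-1}_r$ from \eqref{lin k-estimates-2}, rather than on $\Vert h^k\Vert_{L^\infty}\le C_\eta$ or $\Vert rh^k_t\Vert_{L^2}\le C_\eta$. The remaining steps are routine.
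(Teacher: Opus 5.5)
Your proposal is correct and follows the paper's route. You derive the transport equation \eqref{lin hphi-1} for $h^k\phi^k$ driven by $u^{k-1}$, subtract consecutive iterates, and run the same $L^2$ energy argument as in Lemma \ref{conv of rho}, relying only on the $\eta$-independent bounds $\Vert h^{k-1}\phi^{k-1}\Vert_{L^\infty}+\Vert r(h^{k-1}\phi^{k-1})_r\Vert_{L^2}\le C$ from \eqref{lin k-estimates-2}; you simply write out the difference equation, the vanishing boundary term at $r=a$, and the Young's inequality step that the paper compresses into ``similar to Lemma \ref{conv of rho}''.
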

\begin{proof}
	Recalling \eqref{lin hphi-1}, we have
	$$
	(h^k\phi^k)_t+(h^k \phi^k)_r u^{k-1}+(\gamma-1)(h^k \phi^k)(u^{k-1}_r+\frac{2}{r}u^{k-1})=0,
	$$ similar to Lemma \ref{conv of rho}, we have
	$$
	\Vert \overline{h^k\phi^k} \Vert_{ L^2}\leq C \Vert \overline{h^k\phi^k}\Vert_{ L^2}+\varepsilon\Vert \bar u^{k-1}\Vert_{ H^1}.
	$$
	
	Then, we have proved this lemma.
\end{proof}

\begin{lemma}\label{conv of u}
The following estimate:
\begin{equation}\label{bar u L2}
\begin{split}
&\frac{d}{dt}\Vert\sqrt{h^k} \bar u^k(t) \Vert_{L^{2}}^2+\delta\Vert h^k\bar u^k_r(t)\Vert_{ L^2}^2+\frac{\delta}{4}\Vert h^k \frac{2}{r}\bar u^k (t)\Vert_{ L^2}^2\\
\leq&C\Vert \sqrt{h^k}\bar u^k(t) \Vert_{L^{2}}^2+C\Vert \bar h^k(t) \Vert_{ L^2}^2+C\Vert \overline{h^k\phi^k}(t)\Vert_{ L^2}^2+C\Vert \bar v^k(t)\Vert_{ L^2}^2\\
&+\varepsilon \Vert \sqrt{h^{k-1}}\bar u^{k-1}(t) \Vert_{L^{2}}^2+\varepsilon\Vert {h^{k-1}}\bar u^{k-1}_r(t)\Vert_{ L^2}^2+\varepsilon\Vert h^{k-1}\frac{2}{r}\bar u^{k-1}(t)\Vert_{ L^2}^2
\end{split}
\end{equation}
holds for any $0\leq t \leq T\le (Cc_3)^{-5}$.
\end{lemma}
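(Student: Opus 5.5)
Subtract the $(k-1)$-th momentum equation from the $k$-th one in (\ref{approximate sys})$_5$ to get an equation for $\bar u^k$. The plan is then an $L^2$ energy estimate with weight $h^k$, i.e. testing with $2h^k\bar u^k$. Since the stated norms carry no $r$-weight, integrals are taken with $dr$ only. The difference equation reads
\begin{equation*}
\begin{aligned}
\bar u^k_t-\delta h^k\Big(\bar u^k_r+\frac{2}{r}\bar u^k\Big)_r={}&\delta\bar h^k\Big(u^{k-1}_r+\frac{2}{r}u^{k-1}\Big)_r-\big(u^{k-1}u^{k-1}_r-u^{k-2}u^{k-2}_r\big)\\
&-\frac{\gamma}{2\delta}\big(\phi^k(v^k-u^{k-1})-\phi^{k-1}(v^{k-1}-u^{k-2})\big)+\delta\big((v^k-u^{k-1})u^k_r-(v^{k-1}-u^{k-2})u^{k-1}_r\big)\\
&+2(\delta-1)\frac1r\big((v^k-u^{k-1})u^{k-1}-(v^{k-1}-u^{k-2})u^{k-2}\big).
\end{aligned}
\end{equation*}
Multiplying by $2h^k\bar u^k$ gives $\frac{d}{dt}\Vert\sqrt{h^k}\bar u^k\Vert_{L^2}^2-\int h^k_t(\bar u^k)^2$ on the left. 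The second term is bounded by $\Vert h^k_t/h^k\Vert_{L^\infty}\Vert\sqrt{h^k}\bar u^k\Vert_{L^2}^2\le C\Vert\sqrt{h^k}\bar u^k\Vert_{L^2}^2$ using (\ref{lin k-estimates-1}). This avoids the $\eta$-dependent bound on $\Vert h^k\Vert_{L^\infty}$, which we are not allowed to use.

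For the viscous term, integrate by parts. The boundary term vanishes since $\bar u^k(a)=0$ and there is decay at infinity. This gives
\begin{equation*}
2\delta\int\big((h^k)^2\big)_r\bar u^k\Big(\bar u^k_r+\frac2r\bar u^k\Big)+2\delta\int (h^k)^2\Big(\bar u^k_r+\frac2r\bar u^k\Big)\bar u^k_r-4\delta\int\frac{(h^k)^2}{r}\Big(\bar u^k_r+\frac2r\bar u^k\Big)\bar u^k,
\end{equation*}
moved to the left side. The good part is $2\delta\int(h^k)^2(\bar u^k_r)^2$ plus the quadratic form in $(h^k\bar u^k_r,h^k\frac2r\bar u^k)$. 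After combining the cross terms (the $\frac{4}{r}\bar u^k\bar u^k_r$ pieces can also be integrated by parts using $\frac{d}{dr}(\bar u^k)^2$ and $(1/r)_r<0$), this quadratic form should be bounded below by $\delta\Vert h^k\bar u^k_r\Vert_{L^2}^2+\frac\delta4\Vert h^k\frac2r\bar u^k\Vert_{L^2}^2$. I expect Cauchy–Schwarz with a specific split to be enough here; that is where the constant $\frac{\delta}{4}$ comes from. The terms involving $(h^k)^2_r=2h^kh^k_r$ are controlled by $C\Vert h^k_r\Vert_{L^\infty}\Vert h^k(\bar u^k_r,\frac2r\bar u^k)\Vert_{L^2}\Vert\bar u^k\Vert_{L^2}$. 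Since $\Vert1/h^k\Vert_{L^\infty}\le C$, we have $\Vert\bar u^k\Vert_{L^2}\le C\Vert\sqrt{h^k}\bar u^k\Vert_{L^2}$, so Young's inequality absorbs these terms.

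The source terms are handled as follows.
\begin{itemize}
\item The $\bar h^k$ term: estimate $\int h^k\bar h^k(u^{k-1}_r+\frac2ru^{k-1})_r\bar u^k$ as $\Vert\bar h^k\Vert_{L^2}\Vert h^{k}\bar u^k\Vert_{L^\infty}\Vert(u^{k-1}_r+\frac2ru^{k-1})_r\Vert_{L^2}$. For the middle factor, use $h^k\bar u^k\in H^1$: it is bounded by $\Vert h^k\bar u^k\Vert_{L^2}+\Vert h^k\bar u^k_r\Vert_{L^2}+\Vert h^k_r\bar u^k\Vert_{L^2}$. The $L^2$ pieces are again controlled via $h^k\le C\frac{h^k}{h^{k-1}}h^{k-1}$ style bounds or absorbed into the dissipation. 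This produces $C\Vert\bar h^k\Vert^2_{L^2}$ plus small dissipation.
\item The pressure term: rewrite $h^k\phi^k(v^k-u^{k-1})$ so that the difference appears as $\overline{h^k\phi^k}$. Both $h^k\phi^k$ and $\phi^k$ are bounded independently of $\eta$, so this gives $C(\Vert\overline{h^k\phi^k}\Vert^2+\Vert\bar v^k\Vert^2)$ plus $\bar u^{k-1}$ terms.
\item The convective and $(v-u)$ terms split into pieces with $\bar u^k$ (Gronwall), pieces with $\bar v^k$, and pieces with $\bar u^{k-1},\bar u^{k-1}_r$.
\end{itemize}

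The main obstacle is that every appearance of $\bar u^{k-1}$ multiplied by $h^k$ must be converted into the weighted quantities $\sqrt{h^{k-1}}\bar u^{k-1}$, $h^{k-1}\bar u^{k-1}_r$ and $h^{k-1}\frac2r\bar u^{k-1}$. The tool for this is the $\eta$-independent bound $\Vert h^k/h^{k-1}\Vert_{L^\infty}\le C$ from (\ref{lin k-estimates-2}). With it, Young's inequality makes these contributions $\varepsilon$-small, giving exactly the right-hand side of (\ref{bar u L2}).
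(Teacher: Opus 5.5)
Your overall scheme is sound and close to the paper's. The paper multiplies each momentum equation by its own $h$, takes the difference, and then tests with $2\bar u^k$. You take the difference first and test with $2h^k\bar u^k$. Both orderings give the same time-derivative term and the same viscous term $2\delta\int (h^k)^2(\bar u^k_r+\frac2r\bar u^k)_r\bar u^k$. Your ordering even avoids the paper's $u^{k-1}_t\bar h^k$ term. The price is a harmless extra term $\bar h^k\phi^{k-1}(v^{k-1}-u^{k-2})$ when you rewrite the pressure difference through $\overline{h^k\phi^k}$.

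\textbf{The gap: the $\bar h^k$ viscous term.} You bound it by $\Vert\bar h^k\Vert_{L^2}\Vert h^k\bar u^k\Vert_{L^\infty}\Vert(u^{k-1}_r+\frac2ru^{k-1})_r\Vert_{L^2}$. Neither $\Vert h^k\bar u^k\Vert_{L^\infty}$ nor the piece $\Vert h^k\bar u^k\Vert_{L^2}$ of its $H^1$ norm is controlled, uniformly in $\eta$, by $\Vert\sqrt{h^k}\bar u^k\Vert_{L^2}$, $\Vert h^k\bar u^k_r\Vert_{L^2}$ and $\Vert h^k\frac2r\bar u^k\Vert_{L^2}$. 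The reasons are as follows.
\begin{itemize}
\item On $[a,\infty)$ the weight $\frac2r$ degenerates, and $h^k\to 2\eta^{\delta-1}$ at infinity.
\item The bound $\Vert h^k\bar u^k\Vert_{L^2}\le\Vert h^k\Vert_{L^\infty}^{1/2}\Vert\sqrt{h^k}\bar u^k\Vert_{L^2}$ uses exactly the forbidden $\eta$-dependent bound.
\item Replacing $h^k$ by $Ch^{k-1}$ only moves the problem to $h^{k-1}$.
\item $\Vert h^k\bar u^k\Vert_{L^2}$ is not part of the dissipation. A bump of width $\sim\sqrt H$ placed far out where $h^k\approx H$ makes the ratio to the available quantities grow like $H$.
\end{itemize}

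The fix is to put the weight on the known coefficient rather than on $\bar u^k$:
\begin{itemize}
\item Use $h^k\le Ch^{k-1}$ together with the $\eta$-independent bound $\Vert rh^{k-1}(u^{k-1}_r+\frac2ru^{k-1})_r\Vert_{L^2}\le c_2$ from (\ref{lin k-estimates-1}). This is why that quantity is carried in (\ref{c1,c2}).
\item Then use $\Vert\bar u^k\Vert_{L^\infty}\le C(\Vert\sqrt{h^k}\bar u^k\Vert_{L^2}+\Vert h^k\bar u^k_r\Vert_{L^2})$, which holds because $1/h^k\le C$.
\end{itemize}

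The same obstruction appears in $\int h^k\bar v^k u^k_r\bar u^k$, which you leave unspecified. There you must estimate by $\Vert\bar v^k\Vert_{L^2}\Vert ru^k_r\Vert_{L^2}\Vert h^k\frac2r\bar u^k\Vert_{L^\infty}$. The last factor is bounded by $C\Vert h^k\frac2r\bar u^k\Vert_{L^2}+C\Vert\sqrt{h^k}\bar u^k\Vert_{L^2}+\Vert h^k\bar u^k_r\Vert_{L^2}$. This is where the dissipation in $\frac2r\bar u^k$ is genuinely needed.

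\textbf{The integration-by-parts display.} Your display contains a spurious third term $-4\delta\int\frac{(h^k)^2}{r}(\bar u^k_r+\frac2r\bar u^k)\bar u^k$. With it, the cross terms cancel and the $\frac2r\bar u^k$ part enters with coefficient $-2\delta$, so there is no coercivity. The correct computation gives $2\delta\Vert h^k\bar u^k_r\Vert_{L^2}^2+\frac{\delta}{2}\Vert h^k\frac2r\bar u^k\Vert_{L^2}^2$, up to terms containing $h^k_r$. The $\frac\delta2$ comes from integrating $\frac2r((\bar u^k)^2)_r$ by parts, not from a Cauchy--Schwarz split. Absorbing the $\bar v^k$ term above then leaves the stated $\frac\delta4$.
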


\begin{proof}
By virtue of \eqref{lin k-estimates-1} and \eqref{lin k-estimates-2}, we have
\begin{equation}\label{bar ul22}
\begin{cases}
\displaystyle\Vert \bar u^k \Vert_{L^{2}}=\Vert  \sqrt{h^k} \frac{1}{\sqrt{h^k}}\bar u^k \Vert_{L^{2}}\leq \Vert \frac{1}{\sqrt{h^k}}\Vert_{ L^\infty} \Vert  \sqrt{h^k}\bar u^k \Vert_{L^{2}}\leq C\Vert  \sqrt{h^k}\bar u^k \Vert_{L^{2}},\\[2mm]
\displaystyle\Vert \bar u^k\Vert_{H^1}\leq \Vert \bar u^k \Vert_{ L^2}+\Vert \bar u_r^k \Vert_{ L^2}\leq C(\Vert \sqrt{h^k}\bar u^k \Vert_{ L^2}+\Vert h^k \bar u_r^k \Vert_{ L^2}),\\
\displaystyle| h^{k} |=| \frac{h^k}{h^{k-1}} h^{k-1}| \leq C\Vert \frac{h^k}{h^{k-1}} \Vert_{ L^\infty}|h^{k-1}| \leq C| h^{k-1}|.
\end{cases}
\end{equation}
 
Since $h^k>0$ and $\Vert h^k \phi^k \Vert_{ L^\infty}\leq C$, we first multiply both sides of (\ref{approximate sys})$_4$ by $h^k$, we have
 \begin{equation*}\label{hu}
 	\begin{aligned}
&u_t^kh^k+u^{k-1}u_r^{k-1}h^k+C (h^k\phi^k) (v^k-u^{k-1})\\
=&\delta (u_r^k+\frac{2}{r}u^k)_rh^kh^k+\delta (v^k-u^{k-1}) u^k_rh^k+(\delta-1) (v^k-u^{k-1})\frac{2}{r}u^{k-1}h^k,
 	\end{aligned}
 \end{equation*}
and then, we have
\begin{equation}\label{bar hu}
\begin{split}
&\bar u_t^kh^k+u_t^{k-1}\bar h^k+\bar u^{k-1}u_r^{k-1}h^k+u^{k-2}\bar u_r^{k-1}h^k+u^{k-2}u_r^{k-2}\bar h^k+C \overline{h^k\phi^k} (v^k-u^{k-1})\\
&+C (h^{k-1}\phi^{k-1}) (\bar v^k-\bar u^{k-1})\\
&=\delta (\bar u_r^k+\frac{2}{r}\bar u^k)_rh^kh^k+\delta (u_r^{k-1}+\frac{2}{r}u^{k-1})_r\bar h^kh^k+\delta (u_r^{k-1}+\frac{2}{r}u^{k-1})_rh^{k-1}\bar h^k\\
&+\delta (\bar v^k-\bar u^{k-1}) u^k_rh^k+\delta (v^{k-1}-u^{k-2}) \bar u^k_rh^k+\delta (v^{k-1}-u^{k-2}) u^{k-1}_r\bar h^k\\
&+(\delta-1) (\bar v^k-\bar u^{k-1}) \frac{2}{r}u^{k-1}h^k+(\delta-1) (v^{k-1}-u^{k-2})\frac{2}{r}\bar u^{k-1}h^k+(\delta-1) (v^{k-1}-u^{k-2})\frac{2}{r}u^{k-2}\bar h^k,
\end{split}
\end{equation} next, multiplying (\ref{bar hu}) by $2\bar u^k$ and integrating the result by parts over $I_a$, with \eqref{lin k-estimates-1}, \eqref{lin k-estimates-2} and \eqref{bar ul22}$_3$, we obtain that

\begin{equation}\label{bar hu L2}
\begin{split}
&\frac{d}{dt}\Vert \sqrt{h^k}\bar u^k \Vert_{L^{2}}^2\\
&\leq C\Vert \frac{h_t^k}{h^k}\Vert_{ L^\infty}\Vert  \sqrt{h^k}\bar u^k\Vert_{ L^2}^2\\&+C\Vert ru^{k-1}_t \Vert_{ L^2}\Vert \bar h^k \Vert_{ L^2}\Vert \bar u^k \Vert_{ L^\infty}+C\Vert \frac{\sqrt{h^k}}{\sqrt{h^{k-1}}}\Vert_{ L^\infty}\Vert \sqrt{h^{k-1}}\bar u^{k-1}\Vert_{ L^2}\Vert u^{k-1}_r\Vert_{ L^\infty}\Vert \sqrt{h^k}\bar u^k \Vert_{ L^2} \\
&+ C\Vert \frac{\sqrt{h^k}}{h^{k-1}}\Vert_{ L^\infty} \Vert u^{k-2}\Vert_{ L^\infty}\Vert h^{k-1}\bar u^{k-1}_r \Vert_{ L^2}\Vert \sqrt{h^k}\bar u^k \Vert_{ L^2}+C\Vert u^{k-2}\Vert_{ L^\infty}\Vert u^{k-2}_r \Vert_{ L^\infty}\Vert \bar h^k \Vert_{ L^2}\Vert \bar u^k \Vert_{ L^2} \\
&+C\Vert \overline{h^k\phi^k}\Vert_{ L^2}\Vert rv^k-ru^{k-1} \Vert_{ L^2}\Vert \bar u^k \Vert_{ L^\infty}+C\Vert h^{k-1}\phi^{k-1}\Vert_{ L^\infty}\Vert \bar v^{k}-\bar u^{k-1} \Vert_{ L^2}\Vert \bar u^k \Vert_{ L^2}+\sum_{i=1}^9 W_i\\
\leq&C\Vert \sqrt{h^k}\bar u^k \Vert_{L^{2}}^2+C\Vert \bar v^k \Vert_{ L^2}^2+C\Vert \bar h^k \Vert_{ L^2}^2+C\Vert \overline{h^k\phi^k}\Vert_{ L^2}^2+\frac{\delta}{5}\Vert \bar u^k \Vert_{ H^1}^2+\varepsilon \Vert \sqrt{h^{k-1}}\bar u^{k-1} \Vert_{L^{2}}^2+\varepsilon\Vert {h^{k-1}}\bar u^{k-1}_r\Vert_{ L^2}^2\\
&+\sum_{i=1}^9 W_i.
\end{split}
\end{equation}

For $W_1$, with \eqref{lin k-estimates-1} and integrating by parts, we have
\begin{equation}\label{W1}
	\begin{aligned}
		W_1=&2\delta \int(\bar u_r^k+\frac{2}{r}\bar u^k)_rh^kh^k \bar u^k\\
		=&-4\delta \int \bar u^k_r  h^k h^k_r \bar u^k-2\delta \Vert h^k \bar u^k_r \Vert_{ L^2}^2+2\delta \int \frac{2}{r}\bar u^k_r h^k h^k \bar u^k-\delta\Vert h^k \frac{2}{r} \bar u^k \Vert_{ L^2}^2\\
		\leq& C\Vert  \sqrt{h^k}\bar u^k \Vert_{ L^2}^2+ \frac{\delta}{5}\Vert h^k \bar u^k_r\Vert_{ L^2}^2+\delta \int \frac{2}{r} [(\bar u^k)^2]_r h^kh^k-2\delta \Vert h^k \bar u^k_r \Vert_{ L^2}^2-\delta\Vert h^k \frac{2}{r} \bar u^k \Vert_{ L^2}^2\\
		\leq& C\Vert  \sqrt{h^k}\bar u^k \Vert_{ L^2}^2+ \frac{\delta}{5}\Vert h^k \bar u^k_r\Vert_{ L^2}^2
		+\delta\int \frac{2}{r^2}(h^k\bar u^k)^2 +C\Vert h^k_r \Vert_{ L^\infty}\Vert \sqrt{h^k}\bar u^k \Vert_{ L^2}^2\\
		&-2\delta \Vert h^k \bar u^k_r \Vert_{ L^2}^2-\delta\Vert h^k \frac{2}{r} \bar u^k \Vert_{ L^2}^2\\
		\leq&C\Vert  \sqrt{h^k}\bar u^k \Vert_{ L^2}^2+ \frac{\delta}{5}\Vert h^k \bar u^k_r\Vert_{ L^2}^2-2\delta \Vert h^k \bar u^k_r \Vert_{ L^2}^2-\frac{\delta}{2}\Vert h^k \frac{2}{r} \bar u^k \Vert_{ L^2}^2.\\
		\end{aligned}
\end{equation} 

For $W_2$ and $W_3$,
\begin{equation}\label{W23}
\begin{split}
W_2+W_3=&2\delta \int [ (u_r^{k-1}+\frac{2}{r}u^{k-1})_r\bar h^kh^k+(u_r^{k-1}+\frac{2}{r}u^{k-1})_rh^{k-1}\bar h^k] \bar u^k\\
\leq&C\Vert rh^{k-1}(u_r^{k-1}+\frac{2}{r}u^{k-1})_r \Vert_{ L^2}\Vert \bar h^k \Vert_{ L^2}\Vert \bar u^k \Vert_{ L^\infty}\leq C\Vert \bar h^k \Vert_{ L^2}^2+\frac{\delta}{5}\Vert \bar u^k \Vert_{ H^1}^2.
\end{split}
\end{equation}

For $W_4$ and $W_{7}$, with the following fact
\begin{equation*}
	\begin{aligned}
		\Vert  h^k \frac{2}{r}\bar u^k\Vert_{ L^\infty}
		\leq& \Vert  h^k \frac{2}{r}\bar u^k\Vert_{ L^2}+\Vert  h^k_r \frac{2}{r}\bar u^k\Vert_{ L^2}+\Vert  h^k \frac{2}{r^2}\bar u^k\Vert_{ L^2}+\Vert  h^k \frac{2}{r}\bar u^k_r\Vert_{ L^2}\\
		\leq&C \Vert  h^k \frac{2}{r}\bar u^k\Vert_{ L^2}+C\Vert \sqrt{h^k}\bar u^k\Vert_{ L^2}+\Vert  h^k \bar u^k_r\Vert_{ L^2},\\
	\end{aligned}
\end{equation*} we have
\begin{equation}\label{W4}
	\begin{split}
		W_4+W_7=&2 \int [\delta(\bar v^k-\bar u^{k-1})u^k_rh^k
		+(\delta-1) (\bar v^k-\bar u^{k-1})\frac{2}{r}u^{k-1}h^k]\bar u^k\\
		\leq& C\Vert  \bar v^k \Vert_{ L^2}\Vert r u^k_r \Vert_{ L^2}\Vert  h^k \frac{2}{r}\bar u^k\Vert_{ L^\infty}+C\Vert \sqrt{h^{k-1}}\bar u^{k-1} \Vert_{ L^2}\Vert u^k _r \Vert_{ L^\infty}\Vert \sqrt{h^k}\bar u^k \Vert_{ L^2}\\
		&+C\Vert \bar v^k \Vert_{ L^2}\Vert u^{k-1}\Vert_{ L^\infty}\Vert h^k\frac{2}{r}\bar u^k \Vert_{ L^2}+C\Vert \sqrt{h^k}\bar u^k\Vert_{ L^2}\Vert u^{k-1}\Vert_{ L^\infty}\Vert h^{k-1}\frac{2}{r}\bar u^{k-1}\Vert_{ L^2}\\
		\leq&C\Vert \bar v^k \Vert_{ L^2}^2+C\Vert \sqrt{h^k}\bar u^k \Vert_{ L^2}^2+\frac{\delta}{5}\Vert h^k \bar u^k_r \Vert_{ L^2}^2+\frac{\delta}{4}\Vert h^k \frac{2}{r}\bar u^k \Vert_{ L^2}^2\\
		&+\varepsilon\Vert \sqrt{h^{k-1}}\bar u^{k-1} \Vert_{ L^2}^2+\varepsilon\Vert h^{k-1}\frac{2}{r}\bar u^{k-1}\Vert_{ L^2}^2.
	\end{split}
\end{equation}

For $W_5$ and $W_6$, we have
\begin{equation}\label{W5}
\begin{aligned}
W_5+W_6=&2\int [\delta (v^{k-1}-u^{k-2}) \bar u^k_rh^k+\delta (v^{k-1}-u^{k-2}) u^{k-1}_r\bar h^k]\bar u^k\\
\leq&C\Vert \sqrt{h^k}\bar u^k \Vert_{ L^2}^2+C\Vert \bar h^k \Vert_{ L^2}^2+\frac{\delta}{5}\Vert h^k \bar u^k_r \Vert_{ L^2}^2.
\end{aligned}
\end{equation}

For $W_8$ and $W_9$, we have
\begin{equation}\label{W8}
	\begin{aligned}
		W_8+W_9=&2\int [(\delta-1) (v^{k-1}-u^{k-2})\frac{2}{r}\bar u^{k-1}h^k+(\delta-1) (v^{k-1}-u^{k-2})\frac{2}{r}u^{k-2}\bar h^k]\bar u^k\\
		\leq&C\Vert \sqrt{h^k}\bar u^k \Vert_{ L^2}^2+C\Vert \bar h^k \Vert_{ L^2}^2+\varepsilon\Vert \sqrt{h^{k-1}}\bar u^{k-1}\Vert_{ L^2}^2.
	\end{aligned}
\end{equation}

Combining with \eqref{bar hu L2}-\eqref{W8}, we obtain at
\begin{equation}\label{final bar hu L2}
\begin{split}
&\frac{d}{dt}\Vert\sqrt{h^k} \bar u^k \Vert_{L^{2}}^2+\delta\Vert h^k\bar u^k_r\Vert_{ L^2}^2+\frac{\delta}{4}\Vert h^k \frac{2}{r}\bar u^k \Vert_{ L^2}^2\\
\leq&C\Vert \sqrt{h^k}\bar u^k \Vert_{L^{2}}^2+C\Vert \bar h^k \Vert_{ L^2}^2+C\Vert \overline{h^k\phi^k}\Vert_{ L^2}^2+C\Vert \bar v^k\Vert_{ L^2}^2\\
&+\varepsilon \Vert \sqrt{h^{k-1}}\bar u^{k-1} \Vert_{L^{2}}^2+\varepsilon\Vert {h^{k-1}}\bar u^{k-1}_r\Vert_{ L^2}^2+\varepsilon\Vert h^{k-1}\frac{2}{r}\bar u^{k-1}\Vert_{ L^2}^2.
\end{split}
\end{equation}

Finally, we complete the proof of Lemma \ref{conv of u}.
\end{proof}

\subsubsection*{Strong convergence for the full sequence}

Combining (\ref{bar ul22}) and lemmas \ref{conv of rho}-\ref{conv of u},
yields that
\begin{equation*}
\begin{split}
&\frac{d}{dt}\Gamma^k+\Vert h^k \bar u^k_r \Vert_{L^{2}}^2+\Vert h^k\frac{2}{r}\bar u^k \Vert_{L^{2}}^2\\
\leq& C_\varepsilon (\Gamma^k+\Gamma^{k-1})+\varepsilon \Vert h^{k-1} \bar u^{k-1}_r \Vert_{L^{2}}^2+\varepsilon\Vert h^{k-1}\frac{2}{r}\bar u^{k-1} \Vert_{L^{2}}^2,
\end{split}
\end{equation*}
where we define

\begin{equation*}
\begin{cases}
	\Gamma^{k}=\Vert \bar \rho^k \Vert_{L^{2}}^2+ \Vert \bar \phi^k \Vert_{L^{2}}^2+\Vert \bar v^k \Vert_{L^{2}}^2+\Vert \overline{h^k\phi^k} \Vert_{L^{2}}^2+\Vert \bar h^k \Vert_{L^{2}}^2+\Vert  \sqrt{h^k}\bar u^k \Vert_{L^{2}}^2,\\[1mm]
	\Gamma^k_*=\Vert h^k \bar u^k_r \Vert_{L^{2}}^2+\Vert h^k\frac{2}{r}\bar u^k \Vert_{L^{2}}^2.
\end{cases}
\end{equation*}

Next we choose $\varepsilon > 0$ and $T^* \in (0, {\rm min}(1, T))$ small enough, such that
$$
\varepsilon\leq \frac{1}{2C}, \ \ C_{\e} T^*\leq\frac{1}{2} ,
$$
and then, with $\Gamma^k(r,0)=\Gamma^k_*(r,0)=0$ we have
\begin{equation}\label{strong conv1}
\begin{split}
\sup \limits_{0 \leq s \leq t}\Gamma^{k}(s)+\int_{0}^{t}\Gamma_*^k(s)ds
\leq \frac{1}{2}\left( \sup \limits_{0 \leq s \leq t}\Gamma^{k-1}(s)+\int_{0}^{t}\Gamma_*^{k-1}(s)ds     \right).
\end{split}
\end{equation}

With \eqref{lin k-estimates-1}, \eqref{weak conv} and \eqref{strong conv1}, the full sequence $(\rho^k,h^k,\phi^k,v^k,u^k)$ converges strongly to the limit
$(\rho,h,\phi,v,u)$ which is stated in (\ref{weak conv}), namely,
\begin{align}\label{strong convergence}
	\begin{split}
		&\rho^k \to \rho \ {\rm in}\ L^{\infty}\big(0,T^*;L^2(r_1,r_2)\big),\\[1mm]
		&h^k \to h \ {\rm in} \ L^{\infty}\big(0,T^*;L^2(r_1,r_2)\big),\\[1mm]
		&\phi^k \to \phi\ {\rm in} \ L^{\infty}\big(0,T^*;L^2(r_1,r_2)\big),\\[1mm]
		& v^k \to v\ {\rm in} \ L^{\infty}\big(0,T^*;L^2(r_1,r_2)\big),\\[1mm]
		&u^k \to u\ {\rm in} \ L^{\infty}\big(0,T^*;L^2(r_1,r_2)\big)\cap L^2\big(0,T^*;D^1(r_1,r_2)\big),
	\end{split}
\end{align}
as $k\rightarrow\infty$ for any given $r_2>r_1>a$.

With (\ref{weak conv}) and (\ref{strong convergence}), we pass the corresponding sequences in (\ref{approximate sys}) and (\ref{approximate initial}) to the limits and obtain that $(\rho, h, \phi, v, u)$ is the desired strong solution to (\ref{reform sys})-(\ref{compatibility condition local}). The proof of Theorem \ref{localthm2} is complete.

\subsection{Recovering the original system from the auxiliary one with	$\eta>0$}\label{sec 2.5}

According to the initial conditions $\tilde \phi_0=(\tilde \rho_0)^{\gamma-\delta}$, $\tilde h_0=2(\tilde \rho_0)^{\delta-1}$ and \eqref{reform sys}$_{1,2,3}$, we can esaily obtain
\begin{equation*}
	\begin{cases}
	h=2\rho^{\delta-1},\\
	\phi=\rho^{\gamma-\delta}.
	\end{cases}
\end{equation*} 

Next, subtracting (\ref{reform sys})$_4$ from (\ref{reform sys})$_5$ with $h>0$, we obtain that
\begin{equation}\label{equ of v-u}
	(v-u)_t+u(v-u)_r+\delta h(u_r+\frac{2}{r}u)_r+\delta(v-u)u_r+(\delta-1)(v-u)\frac{2}{r}u=0.
\end{equation} 

Differentiating \eqref{reform sys}$_2$ with respect to $r$ and multiplying $\frac{\delta}{\delta-1}$, we have
\begin{equation}\label{equ of hrt}
	\frac{\delta}{\delta-1}h_{rt}+\frac{\delta}{\delta-1}h_{rr}u+\frac{\delta^2}{\delta-1}h_ru_r+\delta h(u_r+\frac{2}{r}u)_r+\delta h_r\frac{2}{r}u=0,
\end{equation} and then subtracting \eqref{equ of v-u} from \eqref{equ of hrt}, one has
\begin{equation}\label{equ of v-u-hrt}
	(v-u-\frac{\delta}{\delta-1}h_r)_t+u(v-u-\frac{\delta}{\delta-1}h_r)_r+\delta(v-u-\frac{\delta}{\delta-1}h_r)u_r+(\delta-1)(v-u-\frac{\delta}{\delta-1}h_r)\frac{2}{r}u=0.
\end{equation} 

Together with \eqref{equ of v-u-hrt} and \eqref{lin initial} $\tilde v_0-u_0-\frac{\delta}{\delta-1} \tilde h_{0,r}=0$, we have
$$
v-u=\frac{\delta}{\delta-1}h_r=2\delta \rho^{\delta-2}\rho_r.
$$

\subsection{Passing to the limit $\eta \to 0$}\label{sec 2.6}
In this subsection, we will complete the proof of Theorem \ref{localthm}. We consider the system as follows (we denote the solution ($\rho,u$) mentioned in the previous section as ($\rho^\eta,u^\eta$)):
\begin{equation}\label{eta sys}
	\left\{
	\begin{aligned}
		&\rho^\eta_t + \rho^\eta_ru^\eta+\rho ^\eta u_r^\eta+\frac{2}{r} \rho^\eta u^\eta= 0,\\
	&(\rho^\eta u^\eta)_t + \frac{1}{r^2} \big( r^2 \rho^\eta ( u^\eta)^2 \big)_r + P(\rho^\eta)_r= 2\delta (\rho^\eta)^\delta (u^\eta_r+\frac{2}{r} u^\eta)_r \\
	&+2\delta (\rho^\eta)^{\delta-1}\rho^\eta_r u^\eta_r+2\delta(\delta-1)(\rho^\eta)^{\delta-1}\rho^\eta_r (u^\eta_r+\frac{2}{r}u^\eta),\\
		&(\rho^\eta,u^\eta)(x, 0) = (\tilde \rho_0, u_0 )(x).
	\end{aligned}
	\right.
\end{equation}



Since the estimates in Proposition \ref{prop2} are independent of $\eta$, then there exists a subsequence (still denoted by $(\rho^\eta, u^\eta)$) converging to the limit $(\rho,u)$ as $\eta \to 0$ in weak$^*$ sense as follows:
\begin{equation}\label{eta conv}
\begin{split}
\rho^\eta-\eta \rightharpoonup \rho\ \ &\text{weakly*}\ \ \text{in}\ \  L^\infty(0,T^*;H^2),\\
\rho^\eta_t \rightharpoonup \rho_t\ \ &\text{weakly*}\ \ \text{in}\ \  L^\infty(0,T^*;H^1),\\
u^\eta \rightharpoonup u \ \ &\text{weakly*}\ \ \text{in}\ \  L^\infty(0,T^*;H^2)\cap L^2(0,T^*;D^3),\\
u^\eta_t \rightharpoonup u_t\ \ &\text{weakly*}\ \ \text{in}\ \  L^\infty(0,T^*;L^2)\cap L^2(0,T^*;D^1),\\
[(\rho^\eta)^{\delta-1}]_r \rightharpoonup (\rho^{\delta-1})_r\ \ &\text{weakly*}\ \ \text{in}\ \  L^\infty(0,T^*;H^1),
\end{split}
\end{equation}
where the weak limit in (\ref{eta conv})$_5$ is exactly $(\rho^{\delta-1})_r$ for $\delta-1<0$, since $\rho(r,t)$ is positive for any $(r,t)\in I_a\times[0,T^*]$ by using the continuity equation and the initial condition $\rho_0>0$.

In addition, with (\ref{eta conv}), Aubin-Lions Lemma and the lower semi-continuity, we can prove Theorem \ref{localthm}. Besides, the uniqueness and time continuity are standard, please refer for instance to \cite{Cho04}. Then the proof of Theorem \ref{localthm} is complete.

\begin{remark}\label{re1}
Similar to \cite{C-L-Z, W-Z SIAM 2025}, letting $(\rho,u)$ be the solution obtained in Theorem \ref{localthm}, we have
\begin{equation}\label{re}
 r^{2+\alpha} \rho \in L^\infty([0,T^*];L^1) \ \ {\rm{if}} \ \  r^{2+\alpha} \rho_0 \in L^1 \ \ {\rm{additionally}},
\end{equation} for any fixed $\alpha \in (1,2)$.

In fact,
Let $\zeta : \mathbb{R}^+ \to \mathbb{R}$ satisfy
\begin{equation*}
\zeta(s)=
\begin{cases}
\displaystyle1, &s\in [0,\frac{1}{2}],\\
\displaystyle(16-\frac{20}{e})s^3+(\frac{44}{e}-36)s^2+(24-\frac{29}{e})s+\frac{6}{e}-4, &s\in [\frac{1}{2},1],\\
\displaystyle e^{-s}, &s\in[1,+\infty),
\end{cases}
\end{equation*}
where $\zeta \in C^1$ similar to that in \cite{C-L-Z,W-Z SIAM 2025}. Then there exists a generic constant $C>0$ such that $|\zeta'(s)|\leq C\zeta(s)$.
Next, we define $\zeta_R(r)=\zeta(\frac{|r|}{R})$. Multiplying (\ref{system1D})$_1$ by $\zeta_R \cdot r^{2+\alpha}$, we have
\begin{equation}\label{xrho 1}
(\zeta_R r^{2+\alpha} \rho)_t+\zeta_R r^\alpha(r^2\rho u)_r=0.
\end{equation}
Integrating (\ref{xrho 1}) by parts over $I_a \times [0,t]$ for $0\leq t < T^*$, we obtain that
\begin{equation}\label{xrho 2}
\begin{split}
&\int r^{2+\alpha}\zeta_R  \rho =\int r^{2+\alpha} \zeta_R \rho_0+\int_{0}^{t}\int r^2\rho u (\zeta_R r^\alpha)_r \\
\leq& \int r^{2+\alpha} \zeta_R \rho_0+ \int_{0}^{t}\int| r^2 \rho u \zeta_R(r^\alpha)_r|+\int_{0}^{t}\int | r^{2+\alpha}\rho u (\zeta_R)_r|\\
\leq& C\int   r^{2+\alpha} \rho_0+\frac{\alpha}{a}\int_{0}^{t}\Vert u \Vert_{ L^\infty}\int r^{2+\alpha} \zeta_R \rho+\frac{C}{R} \int_{0}^{t}\Vert u \Vert_{ L^\infty}\int r^{2+\alpha}\zeta_R \rho.
\end{split}
\end{equation}
By virtue of the initial condition $ r^{2+\alpha}\rho_0 \in L^1$, (\ref{xrho 2}) and Gronwall's inequality, we obtain that
\begin{equation*}\label{xrho 3}
\int r^{2+\alpha}  \zeta_R  \rho\leq  C.
\end{equation*}
Then passing to the limit $R \to +\infty$, we get (\ref{re}).
 
\end{remark}

\section{Global-in-time existence and uniqueness}\label{sec3}
Fix $T>0$, the main purpose in this section is to establish the global-in-time {\it a priori} estimates for the
solutions which are mentioned in Theorem \ref{localthm}. Let $C$ denote a generic positive constant depending on $T$, $\rho_0$, $u_0$, $\gamma$, $\delta$, $a$ and some other known constants.

\subsection{Some basic entropy estimates}
Firstly, similar to Lemma 3.1 in \cite{W-Z SIAM 2017}, we obtain the following lemma.
\begin{lemma}\label{le:rho L1 estimate}
For any $t\in [0,T]$, by the continuity equation \eqref{system1D}$_1$, we have
\begin{equation}\label{rho L1 estimate}
\int  r^2 \rho(\cdot,t)dr=\int r^2 \rho_0 dr\leq C.
\end{equation}
\end{lemma}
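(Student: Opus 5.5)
The plan is to integrate the continuity equation \eqref{system1D}$_1$ against the weight $r^2$. Multiplying by $r^2$ puts the equation in conservation form,
\begin{equation*}
(r^2\rho)_t+(r^2\rho u)_r=0 .
\end{equation*}
To avoid assuming decay of the flux at infinity directly, I will integrate this identity against the cutoff $\zeta_R(r)$ from Remark \ref{re1} over $I_a\times[0,t]$. Integrating by parts in $r$ gives
\begin{equation*}
\int r^2\zeta_R\rho(\cdot,t)\,dr=\int r^2\zeta_R\rho_0\,dr+\int_0^t\!\!\int r^2\rho u\,(\zeta_R)_r\,dr\,ds+\int_0^t \big[r^2\zeta_R\rho u\big]_{r=a}\,ds .
\end{equation*}
The boundary term at $r=a$ vanishes by the boundary condition \eqref{boundary}. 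The term at infinity is absent because $\zeta_R$ decays exponentially while $\rho$ and $u$ are bounded.

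Next I control the flux term. I use $|(\zeta_R)_r|\le \frac{C}{R}\zeta_R$ together with the bound $\|u\|_{L^\infty}\le C\|ru\|_{H^1}$ from Theorem \ref{localthm}. This gives
\begin{equation*}
\Big|\int_0^t\!\!\int r^2\rho u(\zeta_R)_r\Big|\le \frac{C}{R}\int_0^t\|u\|_{L^\infty}\Big(\int r^2\zeta_R\rho\Big)ds .
\end{equation*}
Gronwall's inequality then yields a uniform bound on $\int r^2\zeta_R\rho$, independent of $R$. Monotone convergence as $R\to\infty$ shows that $r^2\rho(\cdot,t)\in L^1$. Returning to the identity, dominated convergence sends the flux term to $0$, since it is $O(1/R)$ times a bounded quantity. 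This gives the equality $\int r^2\rho(\cdot,t)=\int r^2\rho_0$.

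Finally, $\int r^2\rho_0\le C$ follows from the initial assumption \eqref{ini data}. When $\gamma>1$, $r^{2}\rho_0\in L^1$ is assumed directly. When $\gamma=1$, I use $r^{2+\alpha}\rho_0\in L^1$ together with $r\ge a>0$, so $r^2\le a^{-\alpha}r^{2+\alpha}$.

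The only delicate point is justifying the vanishing of the flux at infinity. The cutoff argument handles it, and it only requires $u$ bounded on $[0,t]$, which holds for the strong solution.
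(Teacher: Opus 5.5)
Your proposal is correct and takes essentially the paper's approach: the lemma is the integrated form of $(r^2\rho)_t+(r^2\rho u)_r=0$, with $u(a,t)=0$ removing the boundary flux, and the paper states it without detail by citing an earlier work. Your $\zeta_R$ cutoff (the same device the paper uses in Remark \ref{re1} for the $r^{2+\alpha}$ moment) rigorously justifies that the flux vanishes at infinity, a step the paper leaves implicit; your handling of $\int r^2\rho_0$ in the cases $\gamma>1$ and $\gamma=1$ is also correct.
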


Then, we establish the standard energy estimate and BD entropy estimate.
\begin{lemma}\label{le:basic estimates}
For $\gamma>1$, $\frac{2}{3} \leq \delta<1$ and any $t\in [0,T]$, there holds

\begin{equation}\label{basic estimates}
\begin{split}
&\int (\frac{r^2}{2} \rho u^2+ \frac{r^2P}{\ga-1})(\cdot,t)dr+ \int _0^t \int \rho^\delta \left[  (2\delta-\frac{4}{3})r^2(u_r+\frac{2}{r}u)^2+\frac{4}{3}r^2(u_r-\frac{u}{r})^2      \right]drds\leq C,\\
&\int (\frac{r^2}{2} \rho v^2+ \frac{r^2P}{\ga-1})(\cdot,t)dr+2\gamma \delta \int_{0}^{t}\int r^2 \rho^{\gamma+\delta-3}\rho_r^2drds\leq C,
\end{split}
\end{equation}
where $v=u+2\delta\rho^{\delta-2}\rho_r$.
\begin{proof}
	Multiplying the first two terms of \eqref{system1D}$_2$ by $r^2 u$ and using \eqref{system1D}$_1$, we obtain that
	\begin{equation*}\label{equ u r2u}
		\begin{aligned}
			&r^2(\rho u)_t u+(r^2 \rho u^2)_r u=r^2 \rho_t u^2+r^2\rho uu_t+2r \rho u^3+r^2 \rho_r u^3+2r^2 \rho u^2 u_r\\
			=&(\frac{1}{2} r^2 \rho_t u^2+r^2\rho uu_t)-\frac{1}{2} r^2 (\rho_r u+\rho  u_r +\frac{2}{r}\rho u) u^2+2r \rho u^3+r^2 \rho_r u^3+2r^2 \rho u^2 u_r\\
			=&\frac{1}{2}\frac{d}{dt}(r^2 \rho u^2)+\frac{1}{2}r^2 \rho _r u^3+ \frac{3}{2}r^2\rho u^2 u_r  +r\rho u^3=\frac{1}{2}\frac{d}{dt}(r^2 \rho u^2)+\frac{1}{2}(r^2 \rho u^3)_r.
		\end{aligned}
	\end{equation*} and then, multiplying \eqref{system1D}$_2$ by $r^2 u$ and integrating the result over $I_a$, we have
\begin{equation}\label{gamma>1 BE 1}
	\begin{aligned}
		&\frac{1}{2}\frac{d}{dt}\int r^2\rho u^2-\int\left( 2\delta \rho^\delta  (u_r+\frac{2}{r} u)_r +2\delta \rho^{\delta-1}\rho_r u_r+2\delta(\delta-1)\rho^{\delta-1}\rho_r (u_r+\frac{2}{r}u) \right)r^2 u \\
		=&-\int r^2P_ru
		=-\gamma \int r^2 \rho^{\gamma-2} \rho_r \rho u 
		=-\frac{\gamma}{\gamma-1}\int r^2 (\rho^{\gamma-1})_r \rho u\\
		=&\frac{\gamma}{\gamma-1}\int \rho^{\gamma-1}(r^2 \rho u)_r=-\frac{\gamma}{\gamma-1}\int r^2 \rho^{\gamma-1}  \rho_t
		=-\frac{1}{\gamma-1}\frac{d}{dt}\int r^2 \rho^{\gamma},
	\end{aligned}
\end{equation} where we have used \eqref{system1D}$_1$.

Next, we estimate the second term on the left-hand side of \eqref{gamma>1 BE 1} and rewrite it in terms of $(\rho^\delta, ru_r, u)$, with $\frac{2}{3}< \delta<1$ we obtain that
\begin{equation}\label{gamma>1 BE 1-2}
	\begin{aligned}
		&-\int\left( 2\delta \rho^\delta  (u_r+\frac{2}{r} u)_r +2\delta \rho^{\delta-1}\rho_r u_r+2\delta(\delta-1)\rho^{\delta-1}\rho_r (u_r+\frac{2}{r}u) \right)r^2 u \\
		=& -\int\left( 2(\rho^\delta u_r)_r +\frac{4}{r} \rho^\delta u_r -\frac{4}{r^2}\rho^\delta u+2(\delta-1)[\rho^\delta(u_r+\frac{2}{r}u)]_r \right)r^2 u \\
		=&\int\left[2(\rho^\delta u_r)(r^2u_r+2ru)-4r \rho^\delta uu_r +4\rho^\delta u^2+2(\delta-1)[\rho^\delta(u_r+\frac{2}{r}u)](r^2u_r+2ru) \right]\\
		=&\int \rho^\delta \left[ 2\delta (ru_r)^2+(8\delta-4)u^2+(8\delta-8)ruu_r       \right] \\
		=&\int \rho^\delta \left[  (2\delta-\frac{4}{3})r^2(u_r+\frac{2}{r}u)^2+\frac{4}{3}r^2(u_r-\frac{u}{r})^2      \right]\ge 0,
	\end{aligned}
\end{equation} where $(u_r+\frac{2}{r}u)$ denotes the volumetric expansion rate and $(u_r-\frac{u}{r})$ denotes the shear deformation rate. Then, integrating \eqref{gamma>1 BE 1} over $[0,t]$ implies \eqref{basic estimates}$_1$.
 
 According to \eqref{system1D}, we have that
\begin{equation}\label{equ v}
\rho(v_t+uv_r)+P_r=0,
\end{equation} and then, multplying \eqref{equ v} by $r^2v$ and integrating over $I_a$, similar to \eqref{equ u r2u} we have
\begin{equation}\label{gamma>1 BE 2}
	\begin{aligned}
		\frac{d}{dt}\int(\frac{r^2}{2}\rho v^2)=-\int r^2 P_r (u+2\delta \rho^{\delta-2}\rho_r)=-\frac{d}{dt} \left( \frac{r^2P}{\gamma-1} \right)-2 \gamma \delta\int   r^2 \rho^{\gamma+\delta-3} \rho_r^2.
	\end{aligned}
\end{equation} Finally, integrating \eqref{gamma>1 BE 1} over $[0,t]$ implies \eqref{basic estimates}$_2$.

\end{proof}

\begin{lemma}\label{le:ga=1}
For $\gamma=1$, $\frac{2}{3}<\delta<1$, $1<\alpha<2$ and any $t\in [0,T]$, there holds
\begin{equation}\label{ga=1}
\begin{cases}
\displaystyle\int(r^2\rho u^2+r^{2+\alpha}\rho)(\cdot,t)+\int_0^t \int \rho^\delta \left[  (2\delta-\frac{4}{3})r^2(u_r+\frac{2}{r}u)^2+\frac{4}{3}r^2(u_r-\frac{u}{r})^2      \right]drds \leq C,\\[2mm]
\displaystyle\int (r^2\rho v^2+r^{2+\alpha}\rho)(\cdot,t)+C\int_{0}^{t}\int r^2 \rho^{\gamma+\delta-3}\rho_r^2drds\leq C.
\end{cases}
\end{equation}
\end{lemma}
\begin{proof}
The proof of (\ref{ga=1})$_1$ is similar to that in \cite{H-Z 2024}. 
Multiplying (\ref{system1D})$_2$ by $r^2u$, integrating by parts over $I_a$, with (\ref{system1D})$_1$ we obtain that
\begin{equation}\label{ga=1 1}
\begin{split}
&\frac{1}{2}\frac{d}{dt}\int r^2\rho u^2+\int \rho^\delta \left[  (2\delta-\frac{4}{3})r^2(u_r+\frac{2}{r}u)^2+\frac{4}{3}r^2(u_r-\frac{u}{r})^2      \right]\\
=& \int \rho (r^2u_r+2r u) =\int r^2\rho (u_r+\frac{2}{r} u)=-\int r^2 \rho(\frac{\rho_t}{\rho}+u\frac{\rho_r}{\rho}) \\
 =&-\int [r^2 \rho (\ln \rho)_t-( r^2 \rho u)_r \ln \rho] =-\int[ r^2 \rho (\ln \rho)_t+ r^2 \rho_t \ln \rho] =-\frac{d}{dt}\int  r^2 \rho\ln \rho .
\end{split}
\end{equation}

Next, integrating (\ref{ga=1 1}) over $[0,t]$ for any $0\leq t \leq T$, we have
\begin{equation}\label{ga=1 2}
\begin{split}
\frac{1}{2}\int r^2\rho u^2+\int_0^t \int \rho^\delta \left[  (2\delta-\frac{4}{3})r^2(u_r+\frac{2}{r}u)^2+\frac{4}{3}r^2(u_r-\frac{u}{r})^2      \right]drds\leq C-\int r^2 \rho\ln \rho(\cdot,t) dr.
\end{split}
\end{equation}
In fact, for $\alpha>1$ we have
\begin{equation}\label{ga=1 3}
\begin{split}
&-\int r^2 \rho\ln \rho(\cdot,t) =-\int_{\rho \leq1}  r^2 \rho\ln \rho(\cdot,t) -\int_{\rho>1} r^2 \rho\ln \rho(\cdot,t)  \leq -\int_{\rho \leq1} r^2 \rho\ln \rho(\cdot,t) \\
=&\int_{\rho \leq1} r^2 \rho\ln\frac{1}{\rho}(\cdot,t) \leq C\int_{\rho \leq1} r^2 \rho^{\frac{3}{4}}(\cdot,t) \leq C\Vert r^{\frac{2+\alpha}{4}}\rho^{\frac{1}{4}}\Vert_{ L^4}\Vert   r^{1+\frac{\alpha}{2}} \rho^\frac{1}{2}\Vert_{ L^2}\Vert  r^{\frac{2-3\alpha}{4}} \Vert_{ L^4}
\leq C + C\int   r^{2+\alpha} \rho.
\end{split}
\end{equation} where we have used Lemma \ref{le:rho L1 estimate} and $\displaystyle \ln \frac{1}{\rho}\leq C(\frac{1}{\rho})^{\frac{1}{4}}$.

Putting (\ref{ga=1 3}) to (\ref{ga=1 2}), we have
\begin{equation}\label{ga=1 4}
\frac{1}{2}\int r^2\rho u^2+\int_0^t \int \rho^\delta \left[  (2\delta-\frac{4}{3})r^2(u_r+\frac{2}{r}u)^2+\frac{4}{3}r^2(u_r-\frac{u}{r})^2      \right]drds \leq C+ \int   r^{2+\alpha} \rho.
\end{equation}

For the last term of \eqref{ga=1 4}, for $\alpha<2$ and $r>a>0$, we have
\begin{equation}\label{ga=1 5}
\begin{split}
&\frac{d}{dt}\int r^{2+\alpha} \rho = \int  r^2  \rho_t  r^{\alpha}  =-\int  (r^2\rho u)_r r^{\alpha} \\
\leq&C\int \rho |u|   r^{1+\alpha}
\leq C\Vert  r^{\alpha} \rho^{\frac{1}{2}} \Vert_{ L^2}\Vert r \rho^{\frac{1}{2}}|u|\Vert_{ L^2}\\
\leq&C\int  r^{2\alpha} \rho +C\int  r^2 \rho u^2 \leq C\int  r^{2+\alpha} \rho +C\int  r^2 \rho u^2.
\end{split}
\end{equation}

Then
\begin{equation}\label{ga=1 6}
\int r^{2+\alpha}\rho \leq C\int_{0}^{t}\int  r^{2+\alpha} \rho drds+C\int_{0}^{t} \int r^2 \rho u^2 drds.
\end{equation}

Next, together with (\ref{ga=1 6}) and (\ref{ga=1 4}) and using Gronwall's inequality, we obtain that
\begin{equation}\label{new0}
\int(r^2\rho u^2+r^{2+\alpha}\rho )(\cdot,t)+\int_0^t \int \rho^\delta \left[  (2\delta-\frac{4}{3})r^2(u_r+\frac{2}{r}u)^2+\frac{4}{3}r^2(u_r-\frac{u}{r})^2      \right]drds \leq C,
\end{equation}
 and (\ref{ga=1})$_2$'s proof is similar.
\end{proof}

\end{lemma}

Next, we show the conservation of total mass and the uniform upper bound of the density.

\begin{lemma}\label{le:upper bound of rho}
For $\gamma\ge1$, $\frac{2}{3}\leq \delta<1$ and any $t\in [0,T]$, there holds
\begin{equation}\label{upper bound of rho}
\begin{split}
\Vert \rho (\cdot,t)\Vert_{L^{\infty}}\leq C.
\end{split}
\end{equation}
\end{lemma}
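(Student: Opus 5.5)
We bound $\rho$ through the BD entropy quantity $\rho^{\delta-2}\rho_r$, which is controlled by $v-u$ in a $\rho$-weighted $L^2$ sense. From Lemmas \ref{le:basic estimates} and \ref{le:ga=1} we have, uniformly on $[0,T]$,
\[
\int r^2\rho u^2+\int r^2\rho v^2\le C,\qquad\text{hence}\qquad \int r^2\rho\,(\rho^{\delta-2}\rho_r)^2\le C,
\]
since $2\delta\rho^{\delta-2}\rho_r=v-u$. The weighted quantity is
\[
\rho\,\rho^{2\delta-4}\rho_r^2=\rho^{2\delta-3}\rho_r^2=\frac{1}{(\delta-\frac12)^2}\bigl|(\rho^{\delta-\frac12})_r\bigr|^2,
\]
so $\Vert r(\rho^{\delta-\frac12})_r\Vert_{L^2}\le C$, where $\delta-\tfrac12\ge\tfrac16>0$ because $\delta\ge\frac23$.

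To obtain $L^\infty$ from this derivative bound, we use the far-field condition $\rho\to0$ as $r\to\infty$. This gives $\rho^{\delta-\frac12}(r)=-\int_r^\infty(\rho^{\delta-\frac12})_s\,ds$, and by Cauchy--Schwarz with the weight $s$,
\[
\rho^{\delta-\frac12}(r)\le \Bigl(\int_r^\infty s^{-2}ds\Bigr)^{1/2}\Vert s(\rho^{\delta-\frac12})_s\Vert_{L^2}\le C r^{-1/2}\le Ca^{-1/2}.
\]
Raising to the power $1/(\delta-\frac12)$ then yields \eqref{upper bound of rho}, and in fact the decay bound $\rho\le Cr^{-1/(2\delta-1)}$.

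The main point to justify is the vanishing boundary term at infinity. For the local solution we have $r\rho\in H^2$, so $\rho\to0$ and the integral representation is legitimate. For the case $\gamma=1$, Lemma \ref{le:ga=1} supplies the same $\rho v^2$ and $\rho u^2$ bounds, so the argument is identical for all $\gamma\ge1$. If one prefers to avoid the pointwise representation, an alternative is a Sobolev-type interpolation using the mass bound of Lemma \ref{le:rho L1 estimate} together with the gradient bound. The direct integration above is simpler, and I expect it to suffice.
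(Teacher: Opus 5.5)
Your proof is correct, but it takes a different route from the paper's.

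The paper drops the radial weight, using $r\ge a$, to get the unweighted bound $\Vert(\rho^{\delta-\frac12})_r\Vert_{L^2}\le C$. It pairs this with the mass bound of Lemma \ref{le:rho L1 estimate}, since $\Vert \rho^{\delta-\frac12}\Vert_{L^{\frac{2}{2\delta-1}}}^{\frac{2}{2\delta-1}}=\int\rho\le a^{-2}\int r^2\rho$. It then bounds $R=\rho^{\delta-\frac12}$ pointwise in two ways: on $[a,a+1]$ by the $W^{1,1}$ embedding, and for $r>a+1$ by Poincar\'e on the unit interval around $r$, with H\"older control of the mean $\bar R$.

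You instead keep the weight $r$ in the BD bound and integrate from infinity. This avoids both the mass bound and the local covering, and it gives the extra decay $\rho\le Cr^{-1/(2\delta-1)}$. Your derivation is accurate. The price is twofold. First, you use the qualitative input $\rho(\cdot,t)\to0$ as $r\to\infty$, which you correctly justify from $r\rho\in C([0,T];H^2)$; positivity of $\rho$ makes $R$ locally $C^1$, so the representation $R(r)=-\int_r^\infty R_s\,ds$ is legitimate. Second, your argument relies essentially on the radial weight. In an unweighted setting such as the 1D Cauchy problem, an $L^2$ bound on $R_x$ alone does not make $\int_x^\infty|R_s|\,ds$ finite. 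The paper's argument is purely quantitative and works there unchanged.

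Your qualitative input can also be removed. The weighted Cauchy--Schwarz step already shows $R_r\in L^1(a,\infty)$, so $R$ has a limit at infinity. The mass bound $\int\rho<\infty$ then forces that limit to be $0$.
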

\begin{proof}
	According to Lemma \ref{le:rho L1 estimate}-\ref{le:ga=1} and $v-u \sim \rho^{\delta-2}\rho_r$, for $\gamma \ge 1$ we have that
	$$
	\Vert ( \rho^{\delta-\frac{1}{2}})_r \Vert_{ L^2} +\Vert \rho^{\delta-\frac{1}{2}}\Vert_{L^{\frac{2}{2\delta-1}}}\leq C,
	$$ where $\frac{2}{2\delta-1}>2$.
	
We define that $$R=\rho^{\delta-\frac{1}{2}},$$and prove this lemma in two steps: 

\textbf{Step 1: Estimate on $I_1 = [a, a+1]$.}\\
	
	  With $|I_1|=1$ and H\"older's inequality we have
	  \begin{equation*}
	  	\begin{cases}
	  		\displaystyle	\Vert R_r \Vert_{L^1(I_1)} \leq |I_1|^{1/2} \Vert R_r \Vert_{L^2(I_1)} \leq \Vert R_r \Vert_{L^2},\\[2mm]
	  			\displaystyle	\Vert R \Vert_{L^1(I_1)} \leq |I_1|^{\frac{3-2\delta}{2}} \Vert R \Vert_{L^{\frac{2}{2\delta-1}}(I_1)} \leq \Vert R \Vert_{L^{\frac{2}{2\delta-1}}}.
	  	\end{cases}
	  \end{equation*}
	
	The Sobolev embedding $W^{1,1}(I_1) \hookrightarrow L^\infty(I_1)$ yields that
\begin{equation*}
		\Vert R \Vert_{L^\infty(I_1)} \leq C ( \Vert R \Vert_{L^{\frac{2}{2\delta-1}}} + \Vert R_r \Vert_{L^2} ) \leq C .
\end{equation*}

\textbf{Step 2: Estimate on $(a+1, \infty)$.}\\
	
	Take any $r > a+1$ and consider the unit interval centered at $r$:
$$
	I_r = (r - \frac{1}{2}, r + \frac{1}{2}).
$$
	Since $r > a+1$, we have $I_r \subset [a, \infty)$.
	Let
$$
\displaystyle	\bar{R} = \frac{1}{|I_r|} \int_{I_r} R(s)ds = \int_{I_r} R(s)ds.
$$
With Poincar\'e inequality, we have
\begin{equation}\label{R Linfty 1}
	\Vert R - \bar{R} \Vert_{L^2(I_r)} \leq C \Vert R_r \Vert_{L^2(I_r)}.
\end{equation}
Then, \eqref{R Linfty 1} yields that
\begin{equation}\label{R Linfty 2}
		\Vert R - \bar{R} \Vert_{L^\infty(I_r)} \leq C ( \Vert R - \bar{R} \Vert_{L^2(I_r)} + \Vert R_r \Vert_{L^2(I_r)} )\leq C\Vert R_r \Vert_{L^2(I_r)}.
\end{equation}

By H\"older's inequality, we have
\begin{equation}\label{R Linfty 3}
	|\bar{R}| \leq \frac{1}{|I_r|} \int_{I_r} |R(s)|ds = \Vert R\Vert_{L^1(I_r)} \leq C \Vert R \Vert_{L^{\frac{2}{2\delta-1}}(I_r)}.
\end{equation}

From \eqref{R Linfty 2} and \eqref{R Linfty 3},
$$
	|R(r)| \leq |\bar{R}| + |R(r) - \bar{R}| \leq \Vert R \Vert_{L^{\frac{2}{2\delta-1}}(I_r)} + C \Vert R_r \Vert_{L^2(I_r)}\leq  \Vert R \Vert_{L^{\frac{2}{2\delta-1}}} + C \Vert R_r \Vert_{L^2}\leq C.
$$

Then, we complete the proof of this lemma.
\end{proof}

\subsection{Some new global {\it a priori} estimates}
First of all, following the idea of \cite{C-L-Z CVPDE, W-Z SIAM 2025}, we establish a auxiliary lemma before estimating $\Vert ru \Vert_{L^2}$.
	
\begin{lemma}\label{le:rho up+vp} 
For $\gamma\ge1$, $\delta \ge \frac{2}{3}$ and any $t\in [0,T]$, there holds
 \begin{equation}\label{rho up+vp}
\begin{split}
&\Vert r^{\frac{2}{p}} \rho^\frac{1}{p}u(\cdot,t)\Vert_{L^{p}}^{p} + \Vert  r^{\frac{2}{p}} \rho^\frac{1}{p}v(\cdot,t) \Vert_{L^{p}}^{p}	+C\int_{0}^{t}\Vert r\rho^\frac{\delta}{2}u_r|u|^{\frac{p}{2}-1}(\cdot,s)\Vert_{L^2}^2ds  \leq  C,
\end{split}
\end{equation}
where \begin{equation*}
	\displaystyle \frac{2\delta(2\delta-1)}{(1-\delta)^2} \triangleq K(\delta),\ \ 	2\leq p \leq \frac{K(\delta)+\sqrt{K(\delta)^2-4K(\delta)}}{2}.
\end{equation*}
\end{lemma}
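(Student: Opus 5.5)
The plan is to derive two coupled $L^p$ energy inequalities: one for $u$ from the momentum equation, and one for $v$ from the effective-velocity equation \eqref{equ v}. The $v$-inequality will be closed by Gronwall using the $u$-bound.

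\textbf{Step 1: the $u$-inequality.} Multiply \eqref{system1D}$_2$ by $r^2|u|^{p-2}u$ and integrate over $I_a$. The transport part, combined with \eqref{system1D}$_1$ exactly as in the computation \eqref{equ u r2u}, gives $\frac1p\frac{d}{dt}\int r^2\rho|u|^p$; the boundary terms vanish by \eqref{boundary} and the far-field decay. For the viscous terms, write them in the divergence form used in \eqref{gamma>1 BE 1-2} and integrate by parts against $r^2|u|^{p-2}u$. This produces
$$\int\rho^\delta|u|^{p-2}\Big[2\delta(p-1)(ru_r)^2+A(p,\delta)\,ruu_r+B(p,\delta)\,u^2\Big],$$
with explicit coefficients $A,B$ coming from the $\frac{4}{r}\rho^\delta u_r$, $-\frac{4}{r^2}\rho^\delta u$ and $2(\delta-1)[\rho^\delta(u_r+\frac2ru)]_r$ pieces.

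\textbf{Step 2: positivity of the quadratic form (main obstacle).} I need this form to dominate $c\,\rho^\delta|u|^{p-2}(ru_r)^2$ for some $c>0$, i.e.\ the discriminant condition $A^2<8\delta(p-1)B$ up to a small loss. I expect this to reduce to a quadratic inequality in $p$ of the form $p^2-K(\delta)p+K(\delta)\le 0$ with $K(\delta)=\frac{2\delta(2\delta-1)}{(1-\delta)^2}$. This is precisely the stated range $2\le p\le \frac{K+\sqrt{K^2-4K}}{2}$ (for $\delta\ge\frac23$ one has $K\ge 4$, so the root is real). I would first do the algebra carefully to identify $A,B$. If the endpoint is degenerate, I would spend a small part of the $u^2$ term, using $r\ge a$, to keep a strictly positive multiple of $\int r^2\rho^\delta u_r^2|u|^{p-2}$. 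That multiple is the dissipative term appearing in \eqref{rho up+vp}.

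\textbf{Step 3: the pressure term.} I would integrate by parts: $-\int r^2P_r|u|^{p-2}u=\int\rho^\gamma\big((p-1)r^2|u|^{p-2}u_r+2r|u|^{p-2}u\big)$. The first part is bounded by $\varepsilon\int r^2\rho^\delta u_r^2|u|^{p-2}+C\int r^2\rho^{2\gamma-\delta}|u|^{p-2}$. Young's inequality with exponents $\frac p{p-2},\frac p2$ then gives
$$r^2\rho^{2\gamma-\delta}|u|^{p-2}\le r^2\rho|u|^p+r^2\rho^{1+(2\gamma-\delta-1)p/2}.$$
Since $\gamma\ge1>\delta$, the exponent $1+(2\gamma-\delta-1)p/2$ is at least $1$. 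By Lemma \ref{le:upper bound of rho} and Lemma \ref{le:rho L1 estimate}, the last integral is therefore bounded. The second part, $\int 2r\rho^\gamma|u|^{p-1}$, is handled the same way using $r\ge a$. This yields
$$\frac{d}{dt}\int r^2\rho|u|^p+c\int r^2\rho^\delta u_r^2|u|^{p-2}\le C\int r^2\rho|u|^p+C.$$

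\textbf{Step 4: the $v$-inequality and conclusion.} From \eqref{equ v}, $v_t+uv_r=-\frac{\gamma}{2\delta}\rho^{\gamma-\delta}(v-u)$. Multiplying by $r^2\rho|v|^{p-2}v$ and using the continuity equation gives
$$\frac1p\frac{d}{dt}\int r^2\rho|v|^p=-\frac{\gamma}{2\delta}\int r^2\rho^{1+\gamma-\delta}\big(|v|^p-u|v|^{p-2}v\big).$$
The $|v|^p$ term has a good sign. The cross term is at most $\frac12\int r^2\rho^{1+\gamma-\delta}|v|^p+C\int r^2\rho^{1+\gamma-\delta}|u|^p$, and by the upper bound on $\rho$ the latter is at most $C\int r^2\rho|u|^p$. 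Adding the $u$- and $v$-inequalities and applying Gronwall, with initial data controlled by \eqref{ini data}, gives \eqref{rho up+vp}.
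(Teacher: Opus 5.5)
Your route differs from the paper's in how the pressure is treated. The paper never integrates the pressure by parts. It substitutes $P_r=\frac{\gamma}{2\delta}\rho^{\gamma-\delta+1}(v-u)$, so in the $u$-equation the pressure becomes a zeroth-order coupling bounded by $C\int r^2\rho(|u|^p+|v|^p)$. The $u$- and $v$-inequalities are then closed jointly by Gronwall. Since no derivative of $u$ enters, the paper only needs the viscous form to be nonnegative. You instead integrate by parts and spend dissipation to absorb $\int r^2\rho^\gamma|u|^{p-2}u_r$. Your $u$-estimate therefore closes on its own using only mass conservation and $\Vert\rho\Vert_{L^\infty}\le C$, and the $v$-estimate follows afterwards; your Young splitting with exponent $1+(2\gamma-\delta-1)p/2\ge 1$ is correct. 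Your expectation in Step 2 is also right: the form is $\rho^\delta|u|^{p-2}\big[2\delta(p-1)(ru_r)^2-4(1-\delta)p\,ruu_r+(8\delta-4)u^2\big]$, and its discriminant condition is exactly $p^2-Kp+K\le 0$. For $p$ strictly below $\frac{K+\sqrt{K^2-4K}}{2}$ your argument is a complete, decoupled alternative. Its cost is that it needs strict coercivity, which the paper's route does not.

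What fails is your endpoint fix. At $p=\frac{K+\sqrt{K^2-4K}}{2}$ the form equals $2\delta(p-1)\rho^\delta|u|^{p-2}(ru_r-\lambda u)^2$ with $\lambda=\frac{(1-\delta)p}{\delta(p-1)}$. Lowering the $u^2$ coefficient only makes the discriminant positive, and $r\ge a$ gives no comparison between $ru_r$ and $u$. So no positive multiple of $\int r^2\rho^\delta|u|^{p-2}u_r^2$ survives. This is not a marginal case: for $\delta=\frac23$ the only admissible exponent is $p=2$, and the form is $\frac43\rho^\delta(ru_r-u)^2$.

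The correct repair is to absorb into the square itself. In the pressure term, write $(p-1)ru_r+2u=(p-1)(ru_r-\lambda u)+\big((p-1)\lambda+2\big)u$. Apply Young to the first piece against $\int\rho^\delta|u|^{p-2}(ru_r-\lambda u)^2$; the remainder $C\int r^2\rho^{2\gamma-\delta}|u|^{p-2}$ is handled as in your Step 3. Treat the second piece like your $\int 2r\rho^\gamma|u|^{p-1}$. This gives both $L^p$ bounds at the endpoint, but only the dissipation $\int_0^t\int\rho^\delta|u|^{p-2}(ru_r-\lambda u)^2$, not the $u_r$-term in the statement. The paper's proof has the same defect there, since the last line of \eqref{rho up} needs strict inequality. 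Only the $L^p$ bounds are used later in Lemma~\ref{le:uLP}, whose condition \eqref{delta} allows equality.
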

\begin{proof}
Multiplying (\ref{system1D})$_2$ by $pr^2 |u|^{p-2}u$, integrating the result by parts over $I_a$, and using Young's inequality and Lemma \ref{le:upper bound of rho}, with $P_r=C\rho^{\gamma-\delta+1}(v-u)$ we have
\begin{equation}\label{rho up}
\begin{split}
\frac{d}{dt} \Vert  r^{\frac{2}{p}}\rho^\frac{1}{p}u \Vert_{L^{p}}^{p}=&-p\int r^2 |u|^{p-2}u P_r+2\delta p\int r^2 |u|^{p-2}u (\rho^\delta u_r)_r +2(\delta-1)p\int r^2 |u|^{p-2}u (\rho^\delta \frac{2}{r} u)_r\\
&+p\int r^2 |u|^{p-2}u \frac{4}{r} \rho^\delta u_r-p\int r^2 |u|^{p} \frac{4}{r^2}\rho^\delta\\
\leq& C\int | |u|^{p-2}u | |v-u| |\rho^{\gamma-\delta+1}|-2\delta p(p-1)\int r^2 \rho^\delta |u|^{p-2}u_r^2   -4\delta p \int r \rho^\delta |u|^{p-2}u  u_r\\
&-8(\delta-1)p \int\rho^\delta |u|^p -4(\delta-1)p(p-1)\int  r\rho^\delta  |u|^{p-2}u_r u\\
&+4p\int r\rho^\delta |u|^{p-2}u  u_r-4p\int \rho^\delta |u|^{p}\\
\leq& C\int | |u|^{p-2}u | |v-u| |\rho^{\gamma-\delta+1}|-2\delta p(p-1)\int r^2 \rho^\delta |u|^{p-2}u_r^2   -4(\delta-1) p^2 \int r \rho^\delta |u|^{p-2}u  u_r\\
&-(8\delta-4)p \int\rho^\delta |u|^p \\
\leq&-C\int r^2 \rho^\delta |u|^{p-2}u_r^2- C\int\rho^\delta |u|^p+C(1+\Vert \rho \Vert_{ L^\infty}^{\gamma-\delta})(\Vert  r^{\frac{2}{p}}\rho^\frac{1}{p}u \Vert_{L^{p}}^{p}+\Vert  r^{\frac{2}{p}}\rho^\frac{1}{p}v \Vert_{L^{p}}^{p}).
\end{split}
\end{equation}where we need 
\begin{equation*}
	\begin{aligned}
	[4(\delta-1)p^2]^2\leq& 4[2\delta p(p-1)]\cdot[(8\delta-4)p],\\
	\end{aligned}
\end{equation*}i.e.
$$
\frac{p^2}{p-1}\leq \frac{2\delta(2\delta-1)}{(1-\delta)^2}\triangleq K(\delta) \Rightarrow p^2-K(\delta)p+K(\delta)\leq 0.
$$

Next, we need $K^2-4K\ge0$, i.e.
\begin{equation*}
	\begin{aligned}
		K(\delta)=\frac{\delta(8\delta-4)}{2(1-\delta)^2}\ge 4 \Rightarrow \delta \ge \frac{2}{3} \Rightarrow 
			\begin{cases}
			\displaystyle	2\leq  p ,\\
			\displaystyle	\frac{K-\sqrt{K^2-4K}}{2} \leq p \leq \frac{K+\sqrt{K^2-4K}}{2},
			\end{cases}
	\end{aligned}
\end{equation*} and 
\begin{equation}
	\begin{cases}
	\displaystyle	2\leq \frac{K+\sqrt{K^2-4K}}{2} \Rightarrow K\ge 4 \Longleftrightarrow \delta \ge \frac{2}{3},\\
	\displaystyle	\frac{K-\sqrt{K^2-4K}}{2} \leq 2 \Rightarrow K\ge 4 \Longleftrightarrow \delta \ge \frac{2}{3},
	\end{cases}
\end{equation} which yields that
\begin{equation}\label{range p}
	2 \leq p \leq  \frac{K+\sqrt{K^2-4K}}{2}.
\end{equation}

To handle the last term of (\ref{rho up}), we have to estimate $\Vert  r^{\frac{2}{p}}\rho^\frac{1}{p}v \Vert_{L^{p}}^{p} $. According to (\ref{system1D}), we have that
\begin{equation}\label{v}
\rho v_t+\rho u v_r +C\rho^{\gamma-\delta+1}(v-u)=0.
\end{equation}

Multiplying $(\ref{v}$) by $p r^2 |v|^{p-2}v$ ($p \ge 2$), integrating the result over $I_a$, and using Young's inequality and Lemma \ref{le:upper bound of rho}, we obtain
\begin{equation}\label{rho vp}
\begin{split}
\frac{d}{dt} \Vert r^{\frac{2}{p}} \rho^\frac{1}{p}v \Vert_{L^{p}}^{p}\leq C\Vert \rho^{\gamma-\delta}\Vert_{L^{\infty}}(\Vert r^{\frac{2}{p}} \rho^{\frac{1}{p}}u\Vert_{L^{p}}^{p} +  \Vert r^{\frac{2}{p}} \rho^{\frac{1}{p}}v\Vert_{L^{p}}^{p} ).
\end{split}
\end{equation}

Then following from the initial conditions:
\begin{equation*}
\begin{split}
&\Vert r^{\frac{2}{p}} (u_0,v_0)\Vert_{L^{p}}\leq C\Vert r^{\frac{2}{p}} (u_0,v_0)\Vert_{H^1}\leq C\Vert r (u_0,v_0)\Vert_{H^1}\leq C,\\
&\Vert  r^2\rho_0 (u_0)^{p}\Vert_{L^1}\leq \Vert  \rho_0\Vert_{L^\infty}\Vert  r^{\frac{2}{p}}u_0 \Vert_{L^{p}}^{p}\leq C,\\
&\Vert  r^2 \rho_0 (v_0)^{p}\Vert_{L^1}\leq \Vert  \rho_0\Vert_{L^\infty}\Vert  r^{\frac{2}{p}}v_0 \Vert_{L^{p}}^{p}\leq C,
\end{split}
\end{equation*} 
together with (\ref{rho up}), (\ref{rho vp}) and Gronwall's inequality, we can prove Lemma \ref{le:rho up+vp}.
\end{proof}



With the above auxiliary lemma, we're ready to estimate $\Vert  ru \Vert_{L^2}+\Vert rv \Vert_{ L^2}$.
\begin{lemma}\label{le:uLP}
For $\gamma\ge1$, any $t\in [0,T]$ and \begin{equation}\label{delta}
\frac{2}{3}\leq \delta \leq 1,\ \ 	\frac{4-2\delta}{1-\delta}\leq \frac{K+\sqrt{K^2-4K}}{2},
\end{equation} we have
\begin{equation}\label{uLp}
\Vert ru(\cdot,t) \Vert_{L^2}^2 + \Vert rv(\cdot,t) \Vert_{L^2}^2+\int_0^t \Vert \rho^{\frac{\delta-1}{2}}u(\cdot,s)\Vert_{L^2}^2(s)ds+\int_0^t \Vert r\rho^{\frac{\delta-1}{2}}u_r(\cdot,s)\Vert_{L^2}^2(s)ds\leq C.
\end{equation}

\end{lemma}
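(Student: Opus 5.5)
The plan is to work with the momentum equation divided by $\rho$, which is the $u$-equation in the reformulated system \eqref{reform sys}$_5$ with $h=2\rho^{\delta-1}$, $\phi=\rho^{\gamma-\delta}$ and $v-u=\frac{\delta}{\delta-1}h_r$. That is, we use
$$u_t+uu_r+\tfrac{\gamma}{2\delta}\rho^{\gamma-\delta}(v-u)=2\delta\rho^{\delta-1}\big(u_r+\tfrac{2}{r}u\big)_r+\delta(v-u)u_r+2(\delta-1)(v-u)\tfrac{u}{r},$$
together with the $v$-equation $v_t+uv_r+\frac{\gamma}{2\delta}\rho^{\gamma-\delta}(v-u)=0$ from \eqref{v}.

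\textbf{Step 1: $L^2$ energy identity for $u$.} Multiply the $u$-equation by $2r^2u$ and integrate by parts exactly as in Step 1 of Lemma \ref{le:local u}, now with $h=2\rho^{\delta-1}$. This produces the dissipation
$$4\delta\int r^2\rho^{\delta-1}u_r^2+8\delta\int\rho^{\delta-1}u^2 .$$
The terms where the derivative falls on $h$ give $\int r^2 h_r u u_r\sim\int r^2(v-u)uu_r$. Together with the explicit $\delta(v-u)u_r$ and $(v-u)u/r$ terms, what remains are integrals of the form
$$\int r^2 (v-u)uu_r,\qquad \int r(v-u)u^2 .$$
Two kinds of remaining terms are routine:
\begin{itemize}
\item The convection $\int r^2u^2u_r$ is $-\int ru^3$, and it also contains a $u$-cubic piece.
\item The pressure term is bounded by $C\|\rho\|_{L^\infty}^{\gamma-\delta}(\|ru\|_{L^2}^2+\|rv\|_{L^2}^2)$ using Lemma \ref{le:upper bound of rho}.
\end{itemize}
The $u$-only pieces ($\int r^2u^2u_r$ after Young, and $\int ru^3$) are treated the same way as the $v$ pieces below.

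\textbf{Step 2: the critical terms and the choice of $p$.} By Young's inequality,
$$\Big|\int r^2 vuu_r\Big|\le\varepsilon\int r^2\rho^{\delta-1}u_r^2+C\int r^2\rho^{1-\delta}v^2u^2 .$$
The key estimate is to bound $\int r^2\rho^{1-\delta}v^2u^2$ by Hölder's inequality with three factors:
\begin{itemize}
\item $\|r^{2/p}\rho^{1/p}v\|_{L^p}^2$, which is controlled by Lemma \ref{le:rho up+vp};
\item a power of $\|r^{2/p}\rho^{1/p}u\|_{L^p}$, also controlled by Lemma \ref{le:rho up+vp};
\item a power of the dissipative quantity $\|\rho^{\frac{\delta-1}{2}}u\|_{L^2}$, or of $\|ru\|_{L^2}$.
\end{itemize}
Matching the $\rho$-exponents, the $r$-exponents and the Hölder exponents so that the dissipative factor appears with total power at most $2$ (absorbable or Gronwall-able) forces a relation between $p$ and $\delta$. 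I expect this relation to be exactly $p=\frac{4-2\delta}{1-\delta}$. Condition \eqref{delta} then guarantees that this $p$ lies in the admissible range of Lemma \ref{le:rho up+vp}, and the remaining factor is bounded by $C(1+\|ru\|_{L^2}^2)$. The term $\int r(v-u)u^2$ is handled the same way, using $r\ge a$.

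\textbf{Step 3: the $v$-estimate and closing.} Multiply the $v$-equation by $2r^2v$. The damping term is harmless. The transport term becomes
$$\int r^2uvv_r=-\int\big(ru+\tfrac{1}{2}r^2u_r\big)v^2 .$$
I would bound $\int r^2|u_r|v^2$ by $\varepsilon\int r^2\rho^{\delta-1}u_r^2+C\int r^2\rho^{1-\delta}v^4$, and then interpolate $\int r^2\rho^{1-\delta}v^4$ between $\|r^{2/p}\rho^{1/p}v\|_{L^p}^p$ and $\|rv\|_{L^2}$ with the same $p$. The boundary term at $r=a$ vanishes since $u(a)=0$. Adding the $u$- and $v$-inequalities and applying Gronwall's inequality yields \eqref{uLp}, with the dissipation giving the two time integrals (note $\rho^{\delta-1}\ge C^{-1}$ by Lemma \ref{le:upper bound of rho}).

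\textbf{Main obstacle.} I expect the hardest step to be the Hölder bookkeeping in Steps 2 and 3, particularly the quartic $v$-term in Step 3, where no negative power of $\rho$ is available to absorb. If that interpolation does not close, my fallback would be to estimate $rv$ through its $L^p$ bound and $\|r(v-u)\|$ via the BD quantity, avoiding a direct $L^2$ energy estimate for $v$. I would first try to make the exponent count give precisely $p=\frac{4-2\delta}{1-\delta}$, which explains the hypothesis.
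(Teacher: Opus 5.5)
Your proposal follows the paper's proof. It uses the $2r^2u$ and $2r^2v$ energy estimates and Young's inequality against the dissipation $\int r^2\rho^{\delta-1}u_r^2+\int\rho^{\delta-1}u^2$. It then interpolates $\int r^2\rho^{1-\delta}u^4$, $\int r^2\rho^{1-\delta}u^2v^2$ and $\int r^2\rho^{1-\delta}v^4$ by H\"older between the weighted $L^p$ norms of Lemma \ref{le:rho up+vp} and $\Vert ru\Vert_{L^2}$ (resp.\ $\Vert rv\Vert_{L^2}$), e.g.\ $\int r^2\rho^{1-\delta}u^4\le \Vert r^{\frac2p}\rho^{\frac1p}u\Vert_{L^p}^{p(1-\delta)}\Vert ru\Vert_{L^2}^{2\delta}$; balancing exponents forces exactly $p=\frac{4-2\delta}{1-\delta}$ and leaves the Gronwall-able power $2\delta\le 2$.

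To settle your hedges: the third H\"older factor must be $\Vert ru\Vert_{L^2}$, not $\Vert\rho^{\frac{\delta-1}{2}}u\Vert_{L^2}$, because the $r$-weights cannot balance with the latter. The quartic $v$-term in Step 3 closes by the identical interpolation, so no fallback is needed. Finally, the $r^1$-weighted cubic terms such as $\int r|v-u|u^2$ should be split as $\rho^{\frac{\delta-1}{2}}|u|\cdot r\rho^{\frac{1-\delta}{2}}|v-u||u|$ against $\int\rho^{\delta-1}u^2$, not via $r\le r^2/a$, which would produce the uncontrolled quantity $\int r^2\rho^{\delta-1}u^2$.
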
	
\begin{proof}
 Firstly, according to (\ref{system1D})$_2$ and the fact that $v= u + 2\delta\rho^{\delta-2}\rho_r$, we have
 \begin{equation}\label{u-2}
 u_t + uu_r + \frac{\gamma}{2\delta} \rho^{\gamma-\delta}(v-u) = 2\delta \rho^{\delta-1}  (u_r+\frac{2}{r} u)_r +(v-u) u_r+(\delta-1)(v-u)(u_r+\frac{2}{r}u).
 \end{equation}

Multiplying (\ref{u-2}) by $2r^2u$, integrating the result by parts over $I_a$, we have
\begin{equation}\label{Lp u}
\begin{split} 		
\frac{d}{dt}\Vert ru \Vert_{L^2}^2 
\leq& 2\int |r^2u^2u_r|+C\int |r^2\rho^{\gamma-\delta}(v-u) u|+4\delta \int r^2  \rho^{\delta-1}  (u_r+\frac{2}{r} u)_r u \\
&+2\int r^2(v-u) u_ru+2(\delta-1)\int r^2(v-u)(u_r+\frac{2}{r}u)u \\
=&G_1+G_2+G_3+G_4+G_5.\\
\end{split} 		
\end{equation}	

For $G_1$, with Young's inequality, Lemma \ref{le:rho up+vp} and choosing
\begin{equation}\label{pp2-1}
	2\leq p=\frac{4-2\delta}{1-\delta}\leq \frac{K+\sqrt{K^2-4K}}{2},
\end{equation} we have
\begin{equation}\label{G1}
	\begin{aligned}
	G_1=& C\int |r^2u^2u_r|\leq \varepsilon \int r^2  \rho^{\delta-1}  u_r^2+C\Vert r^2 \rho^{1-\delta} u^4\Vert_{ L^1}\\
	\leq& \varepsilon \int r^2  \rho^{\delta-1}  u_r^2+C\Vert r^{\frac{2}{p}}\rho^{\frac{1}{p}}u\Vert_{ L^{p}}^{p(1-\delta)}\Vert r u\Vert_{ L^2}^{4-p(1-\delta)}\\
	\leq& C(1+\Vert r u \Vert_{ L^2}^2)+\varepsilon \int r^2  \rho^{\delta-1}   u_r^2,
	\end{aligned}
\end{equation} and \eqref{pp2-1} yields that
$$
p(1-\delta)=4-2\delta >2.
$$

For $G_2$, with Lemma \ref{le:upper bound of rho} we have
\begin{equation}\label{G2}
	\begin{aligned}
		G_2=&C\int |r^2\rho^{\gamma-\delta}(v-u) |u|
		\leq C\Vert ru \Vert_{ L^2}^2+C\Vert rv \Vert_{ L^2}^2.
	\end{aligned}
\end{equation}

For $G_3$, with integrating by parts, $G_1$ and Lemma \ref{le:rho up+vp}, we choose the same $p$ and obtain that
\begin{equation}\label{G3}
	\begin{aligned}
		G_3=&4\delta \int r^2  \rho^{\delta-1}  u_{rr}u -8\delta \int   \rho^{\delta-1} u^2+8\delta \int r \rho^{\delta-1} u_r u\\
		=& -8\delta \int r  \rho^{\delta-1}  u_{r}u-4\delta \int r^2  (\rho^{\delta-1})_r  u_{r}u-4\delta \int r^2  \rho^{\delta-1}  u_{r}^2-8\delta \int   \rho^{\delta-1} u^2+8\delta \int r \rho^{\delta-1} u_r u   \\
		\leq& C \int r^2 | (v-u) u_{r}u|-4\delta \int r^2  \rho^{\delta-1}  u_{r}^2-8\delta \int   \rho^{\delta-1} u^2   \\
		\leq& \varepsilon \int r^2  \rho^{\delta-1}  u_{r}^2+ C\Vert r^2 \rho^{1-\delta} v^2 u^2\Vert_{ L^1}+C\Vert r^2 \rho^{1-\delta}  u^4\Vert_{ L^1}  -4\delta \int r^2  \rho^{\delta-1}  u_{r}^2-8\delta \int   \rho^{\delta-1} u^2   \\
		\leq& \varepsilon \int r^2  \rho^{\delta-1}  u_{r}^2+C\Vert r^{\frac{2}{p}}\rho^{\frac{1}{p}}v\Vert_{ L^{p}}^2\Vert r^{\frac{2}{p}}\rho^{\frac{1}{p}}u\Vert_{ L^{p}}^{p(1-\delta)-2}\Vert r u\Vert_{ L^2}^{4-p(1-\delta)}+C\Vert r^{\frac{2}{p}}\rho^{\frac{1}{p}}u\Vert_{ L^{p}}^{p(1-\delta)}\Vert r u\Vert_{ L^2}^{4-p(1-\delta)}\\
		&-4\delta \int r^2  \rho^{\delta-1}  u_{r}^2-8\delta \int   \rho^{\delta-1} u^2   \\
		\leq& \varepsilon \int r^2  \rho^{\delta-1}  u_{r}^2+C(1+\Vert ru \Vert_{ L^2}^2)-4\delta \int r^2  \rho^{\delta-1}  u_{r}^2-8\delta \int   \rho^{\delta-1} u^2.   \\
	\end{aligned}
\end{equation} 

For $G_4$ and $G_5$, similar to $G_3$, we have
\begin{equation}\label{G4}
	\begin{aligned}
		G_4=&2\int r^2(v-u) u_ru 
		\leq C(1+\Vert ru \Vert_{ L^2}^2)+\varepsilon \int r^2  \rho^{\delta-1}  u_r^2,\\[2mm]
		G_5=&2(\delta-1)\int r^2(v-u)(u_r+\frac{2}{r}u)u
		\leq C|G_4|+C\Vert r (v-u)u^2\Vert_{ L^1}\\
		\leq&C|G_4|+\varepsilon\int \rho^{\delta-1}u^2+C\Vert r^2 \rho^{1-\delta}v^2u^2\Vert_{ L^1}+C\Vert r^2 \rho^{1-\delta}u^4\Vert_{ L^1}\\
		\leq&\varepsilon \int   \rho^{\delta-1}  u^2+C(1+\Vert ru \Vert_{ L^2}^2). \\
	\end{aligned}
\end{equation}

Next, together with \eqref{Lp u}-\eqref{G4}, we have
\begin{equation}\label{uLP finnal}
	\begin{aligned}
		\frac{d}{dt}\Vert ru \Vert_{L^2}^2+4\delta \int   \rho^{\delta-1}  u^2+2\delta \int r^2  \rho^{\delta-1}   u_r^2
		\leq C(1+\Vert ru \Vert_{ L^2}^2)+C\Vert rv \Vert_{ L^2}^2.
	\end{aligned}
\end{equation}

By virtue of \eqref{v}, we have
\begin{equation}\label{vt}
	v_t+u v_r +C\rho^{\gamma-\delta}(v-u)=0,
\end{equation}

Multiplying (\ref{vt}) by $2r^2v$, integrating the result by parts over $I_a$, we have
\begin{equation}\label{vL2-1}
	\begin{aligned}
		\frac{d}{dt}\Vert rv \Vert_{ L^2}^2\leq& -\int r^2 u (v^2)_r +C\Vert r^2\rho^{\gamma-\delta}(v-u)v\Vert_{ L^1}\\
		\leq&C\Vert r uv^2\Vert_{ L^1}+C\Vert r^2 u_r v^2 \Vert_{ L^1}+C\Vert r^2\rho^{\gamma-\delta}(v-u)v\Vert_{ L^1}\\
		\leq&\varepsilon\int \rho^{\delta-1}u^2+ \varepsilon\int r^2\rho^{\delta-1}u_r^2 +C\Vert r^2\rho^{1-\delta} v^4 \Vert_{ L^1}+C(\Vert ru \Vert_{ L^2}^2+\Vert rv \Vert_{ L^2}^2)  \\
		\leq&C\Vert r^{\frac{2}{p}}\rho^{\frac{1}{p}}v\Vert_{ L^{p}}^{p(1-\delta)}\Vert r v\Vert_{ L^2}^{4-p(1-\delta)}+C(\Vert ru \Vert_{ L^2}^2+\Vert rv \Vert_{ L^2}^2) \leq C(1+\Vert ru \Vert_{ L^2}^2+\Vert rv \Vert_{ L^2}^2).
	\end{aligned}
\end{equation}

Together with \eqref{uLP finnal}, \eqref{vL2-1} and Gronwall's inequality we complete the proof of Lemma \ref{le:uLP}.
\end{proof}

\begin{lemma}\label{le:uLinfty}
For $\gamma\ge1$, $\delta$ satisfying \eqref{delta} and any $t\in [0,T]$, there holds
\begin{equation}\label{uLinfty}
\Vert v(\cdot,t) \Vert_{ L^\infty}+\int_{0}^{t}(\Vert u(\cdot,s) \Vert_{L^\infty}^2+\Vert \rho^{\delta-2}\rho_r (\cdot,s)\Vert_{L^\infty}^2)ds\leq C.
\end{equation}
\end{lemma}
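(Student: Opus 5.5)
We prove the three bounds in Lemma \ref{le:uLinfty} separately, taking $u$ first, then $v$, and finally $\rho^{\delta-2}\rho_r=\frac{v-u}{2\delta}$.

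\textbf{Bound on $\int_0^t\Vert u\Vert_{L^\infty}^2$.} Since $u(a,t)=0$, for $r\ge a$
$$
u^2(r,t)=2\int_a^r uu_r\,ds\le 2\Vert \rho^{\frac{1-\delta}{2}}u\Vert_{L^2}\,\Vert \rho^{\frac{\delta-1}{2}}u_r\Vert_{L^2}.
$$
Lemma \ref{le:upper bound of rho} gives $\rho\le C$, so $\rho^{\frac{1-\delta}{2}}\le C$ and $\Vert \rho^{\frac{1-\delta}{2}}u\Vert_{L^2}\le C\Vert u\Vert_{L^2}\le \frac{C}{a}\Vert ru\Vert_{L^2}\le C$ by Lemma \ref{le:uLP}. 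Using $r\ge a$ again, $\Vert\rho^{\frac{\delta-1}{2}}u_r\Vert_{L^2}\le\frac{1}{a}\Vert r\rho^{\frac{\delta-1}{2}}u_r\Vert_{L^2}$. Hence
$$
\Vert u\Vert_{L^\infty}^2\le C\Vert r\rho^{\frac{\delta-1}{2}}u_r\Vert_{L^2},
$$
which is square-integrable in time by \eqref{uLp}, and so $\int_0^t\Vert u\Vert_{L^\infty}^2\le C$.

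\textbf{Bound on $\Vert v\Vert_{L^\infty}$.} We work along characteristics of \eqref{vt}. Write $\phi=\frac{\gamma}{2\delta}\rho^{\gamma-\delta}\ge0$. Along $\frac{d}{dt}X(t)=u(X(t),t)$ we have
$$
\frac{d}{dt}v+\phi v=\phi u .
$$
Both $v$ and $u$ are continuous up to $r=a$, and $u(a,t)=0$ makes $r=a$ invariant, so the characteristics fill $I_a$. Set $W(t)=\int_0^t\phi\,ds$. Then
$$
v(t)=e^{-W(t)}v_0+\int_0^t \phi\, e^{-(W(t)-W(s))}u(s)\,ds,
$$
so that
$$
|v(t)|\le\Vert v_0\Vert_{L^\infty}+\sup_s\Vert u(s)\Vert_{L^\infty}\int_0^t\phi\, e^{-(W(t)-W(s))}ds\le \Vert v_0\Vert_{L^\infty}+\sup_{s\le t}\Vert u(s)\Vert_{L^\infty}.
$$
The difficulty is that we only control $u$ in $L^2_tL^\infty_r$, not in $L^\infty_tL^\infty_r$. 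Since $\gamma\ge\delta$ and $\rho\le C$, we have $\phi\le C$, so we can instead use the cruder estimate
$$
|v(t)|\le\Vert v_0\Vert_{L^\infty}+C\int_0^t\Vert u\Vert_{L^\infty}ds\le \Vert v_0\Vert_{L^\infty}+C\sqrt{T}\Big(\int_0^T\Vert u\Vert_{L^\infty}^2\Big)^{1/2}\le C .
$$
The initial term is finite because $\Vert v_0\Vert_{L^\infty}\le C\Vert r\tilde v_0\Vert_{H^1}$, and $r v_0=ru_0+2\delta r\rho_0^{\delta-2}\rho_{0,r}=ru_0+\frac{2\delta}{\delta-1}r(\rho_0^{\delta-1})_r$ belongs to $H^1$ by \eqref{ini data}. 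The step that needs care is the rigorous characteristic argument on the half-line. Regularity from Theorem \ref{localthm} makes $u$ Lipschitz in $r$ for a.e.\ $t$ with $\int\Vert u_r\Vert_{L^\infty}dt<\infty$, so the flow is well defined. If that turns out to be delicate, the fallback is an $L^q$ energy estimate: multiply \eqref{vt} by $|v|^{q-2}v$ with weight $r^2$, bound it via $\int\Vert u_r\Vert_{L^\infty}$, and let $q\to\infty$. This requires $q$-uniform constants, and the term $\frac{1}{q}\int\Vert (r^2u)_r/r^2\Vert_{L^\infty}$ does vanish as $q\to\infty$.

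\textbf{Bound on $\int_0^t\Vert\rho^{\delta-2}\rho_r\Vert_{L^\infty}^2$.} Since $\rho^{\delta-2}\rho_r=\frac{v-u}{2\delta}$, we get
$$
\int_0^t\Vert\rho^{\delta-2}\rho_r\Vert_{L^\infty}^2\le C\int_0^t\big(\Vert v\Vert_{L^\infty}^2+\Vert u\Vert_{L^\infty}^2\big)\le C,
$$
which completes \eqref{uLinfty}.
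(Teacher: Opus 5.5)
Your proposal is correct and follows essentially the same route as the paper. The bound on $\int_0^t\Vert u\Vert_{L^\infty}^2$ comes from Lemma \ref{le:uLP}; you use $u(a,t)=0$ and the weighted interpolation $\Vert u\Vert_{L^\infty}^2\le 2\Vert\rho^{\frac{1-\delta}{2}}u\Vert_{L^2}\Vert\rho^{\frac{\delta-1}{2}}u_r\Vert_{L^2}$ where the paper uses $W^{1,2}\hookrightarrow L^\infty$, a cosmetic difference. As in the paper, $\Vert v\Vert_{L^\infty}$ then follows along characteristics of \eqref{vt} from $\rho^{\gamma-\delta}\le C$ and $u\in L^1_tL^\infty_r$, and the last bound is read off from $\rho^{\delta-2}\rho_r=\frac{v-u}{2\delta}$.
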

\begin{proof}
(\ref{uLp}) together with Lemmas \ref{le:upper bound of rho}, \ref{le:uLP}, Young's inequality and the Sobolev embedding theorem $W^{1,2} \hookrightarrow L^\infty $ yields that
\begin{equation}\label{u2pLinfty}
\begin{split}
&\int_{0}^{t}\Vert u \Vert_{L^{\infty}}^2(s)ds\leq C\int_{0}^{t}\Vert u\Vert_{L^2}^2(s)ds+C\int_{0}^{t}\Vert u_r\Vert_{L^2}^2(s)ds\\
=&C\int_{0}^{t}\Vert \frac{1}{r}ru\Vert_{L^2}^2(s)+C\int_0^t\Vert \frac{1}{r}r \rho^{\frac{\delta-1}{2}}\rho^{\frac{1-\delta}{2}} u_r\Vert_{L^2}^2(s)ds\\
\leq&C\int_{0}^{t}\Vert ru \Vert_{ L^2}^2(s)ds+C\int_0^t \Vert r\rho^{\frac{\delta-1}{2}}u_r\Vert_{L^2}^2(s)ds\leq C.
\end{split}
\end{equation}

Moreover, with the standard method of characteristics, (\ref{u2pLinfty}) and Lemma \ref{le:upper bound of rho}, one obtains
\begin{equation*}
	\begin{aligned}
		\Vert v \Vert_{ L^\infty} \leq C\left(\Vert v_0 \Vert_{ L^\infty}+ \int_0^t \Vert \rho^{\gamma-\delta} u \Vert_{ L^\infty}(s)ds \right)\leq C,\\[2mm]
		\int_{0}^{t}\Vert \rho^{\delta-2}\rho_r \Vert_{L^{\infty}}^2(s)ds=C\int_{0}^{t}\Vert v-u\Vert_{L^{\infty}}^2(s)ds\leq C.
	\end{aligned}
\end{equation*}

The proof of Lemma \ref{le:uLinfty} is complete.
\end{proof}

Next, we establish the estimate of $u_r$.
\begin{lemma}\label{le:urL2}
	For $\gamma\ge1$, $\delta$ satisfying \eqref{delta} and any $t\in [0,T]$, there holds
	\begin{equation}\label{urL2}
		\Vert u(\cdot,t) \Vert_{ L^\infty}^2+\Vert ru_r(\cdot,t) \Vert_{L^2}^2+\int_0^t\int r^2\rho^{1-\delta} u_t^2 \leq C.
	\end{equation} 
\end{lemma}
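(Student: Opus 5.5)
The plan is to run the natural $H^1$-type energy estimate on the reformulated momentum equation \eqref{u-2}, in the same spirit as Step 2 of Lemma \ref{le:local u}, but now with the true weight $\rho^{1-\delta}$ in place of $1/h$.

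\textbf{Energy identity.} Multiply \eqref{u-2} by $r^2\rho^{1-\delta}u_t$ and integrate over $I_a$. The viscous term gives
\begin{equation*}
2\delta\int r^2(u_r+\tfrac{2}{r}u)_r u_t=2\delta\int r^2u_{rr}u_t+4\delta\int r u_ru_t-4\delta\int uu_t ,
\end{equation*}
and after integrating by parts (using $u_t=0$ at $r=a$) this equals $-\delta\frac{d}{dt}\Vert ru_r\Vert_{L^2}^2-2\delta\frac{d}{dt}\Vert u\Vert_{L^2}^2$. This yields
\begin{equation*}
\delta\frac{d}{dt}\big(\Vert ru_r\Vert_{L^2}^2+2\Vert u\Vert_{L^2}^2\big)+\int r^2\rho^{1-\delta}u_t^2=\int r^2\rho^{1-\delta}u_t\Big(-uu_r-\tfrac{\gamma}{2\delta}\rho^{\gamma-\delta}(v-u)+\delta(v-u)u_r+(\delta-1)(v-u)\tfrac{2}{r}u\Big).
\end{equation*}

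\textbf{Bounding the right-hand side.} Apply Young's inequality to absorb $\tfrac12\int r^2\rho^{1-\delta}u_t^2$. The remaining terms are bounded with Lemma \ref{le:upper bound of rho} ($\rho^{1-\delta}\le C$) and Lemma \ref{le:uLinfty} ($\Vert v\Vert_{L^\infty}\le C$):
\begin{equation*}
C\big(\Vert u\Vert_{L^\infty}^2+\Vert v\Vert_{L^\infty}^2\big)\Vert ru_r\Vert_{L^2}^2+C\Vert \rho^{\gamma-\delta}\Vert_{L^\infty}^2\big(\Vert rv\Vert_{L^2}^2+\Vert ru\Vert_{L^2}^2\big)+C\Vert v-u\Vert_{L^\infty}^2\Vert u\Vert_{L^2}^2 .
\end{equation*}
Since $\int_0^T\Vert u\Vert_{L^\infty}^2\,dt\le C$ by Lemma \ref{le:uLinfty} and $\Vert ru\Vert_{L^2}+\Vert rv\Vert_{L^2}\le C$ by Lemma \ref{le:uLP}, the coefficient of $\Vert ru_r\Vert_{L^2}^2$ is integrable in time. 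Gronwall's inequality then gives the bound on $\Vert ru_r\Vert_{L^2}^2+\int_0^t\int r^2\rho^{1-\delta}u_t^2$. The initial value $\Vert ru_{0,r}\Vert_{L^2}$ is finite by \eqref{ini data}.

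\textbf{The $L^\infty$ bound.} Finally use $\Vert u\Vert_{L^\infty}^2\le C\Vert u\Vert_{L^2}\Vert u_r\Vert_{L^2}\le C\Vert ru\Vert_{L^2}\Vert ru_r\Vert_{L^2}$, which holds since $r\ge a$.

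\textbf{Main obstacle.} The key point is that $\rho^{1-\delta}$ degenerates at the far field, so there is no control of $\Vert ru_t\Vert_{L^2}$ itself. The weight therefore has to be arranged exactly so that the viscous term becomes a pure time derivative. This works only because multiplying by $\rho^{1-\delta}$ cancels the singular coefficient $\rho^{\delta-1}$. The multiplier has no derivative falling on $\rho$, so no $\rho_r$ terms appear beyond those already contained in $v-u$. One should also double-check that the products $(v-u)u_r$ are handled with $v-u\in L^2_tL^\infty$ rather than $L^\infty_t$. The term $\Vert v-u\Vert_{L^\infty}^2\Vert ru_r\Vert_{L^2}^2$ is then still of Gronwall form with an integrable coefficient.
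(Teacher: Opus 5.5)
Your proposal is correct and follows essentially the same route as the paper. You use the same multiplier $r^2\rho^{1-\delta}u_t$, which turns the viscous term into $\delta\frac{d}{dt}\big(\Vert ru_r\Vert_{L^2}^2+2\Vert u\Vert_{L^2}^2\big)$. You then apply Young's inequality to the remaining terms using $\Vert\rho\Vert_{L^\infty}$, $\Vert v\Vert_{L^\infty}$, Lemma \ref{le:uLP} and $\int_0^T\Vert u\Vert_{L^\infty}^2\le C$, and close with Gronwall. The only differences from the paper are cosmetic. You bound the $(v-u)\frac{2}{r}u$ term by $\Vert v-u\Vert_{L^\infty}^2\Vert u\Vert_{L^2}^2$ rather than $\Vert u\Vert_{L^\infty}^2\int\rho^{1-\delta}(v-u)^2$, and you make the final $L^\infty$ bound explicit via $\Vert u\Vert_{L^\infty}^2\le 2\Vert u\Vert_{L^2}\Vert u_r\Vert_{L^2}$.
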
	
\begin{proof}
	Firstly, multiplying (\ref{u-2}) by $ r^2\rho^{1-\delta}u_t$, integrating the result by parts over $I_a$, we have
	\begin{equation}\label{urL2-1}
		\begin{aligned}
			&\int r^2\rho^{1-\delta} u_t^2-2\delta\int r^2 (u_{rr}+\frac{2}{r}u_r-\frac{2}{r^2}u)u_t\\
			=&-(1+\delta)\int r^2 \rho^{1-\delta}  uu_ru_t-C\int r^2\rho^{\gamma+1-2\delta}(v-u)u_t+\delta\int r^2\rho^{1-\delta} vu_ru_t+(\delta-1)\int r^2\rho^{1-\delta} (v-u)\frac{2}{r}uu_t\\	
			=&N_1+N_2+N_3+N_4.		 
		\end{aligned}
	\end{equation}

For the 2nd term of left-hand of \eqref{urL2-1}, with integrating by parts, we have
\begin{equation}\label{urL2-2}
	\begin{aligned}
		&-2\delta\int r^2 (u_{rr}+\frac{2}{r}u_r-\frac{2}{r^2}u)u_t\\
		=&2\delta\int 2r u_ru_t+\delta \frac{d}{dt}\Vert ru_r \Vert_{ L^2}^2-2\delta\int 2r u_ru_t+4\delta \int uu_t\\
		=&\delta \frac{d}{dt}\Vert ru_r \Vert_{ L^2}^2+2\delta\frac{d}{dt}\Vert u \Vert_{ L^2}^2.
	\end{aligned}
\end{equation}

For $N_1$, with Lemma \ref{le:upper bound of rho}, we have
\begin{equation}\label{N1}
	\begin{aligned}
		N_1\leq&\varepsilon\int r^2\rho^{1-\delta}u_t^2+C\int | r^2\rho^{1-\delta}u^2 u_r^2 |\leq \varepsilon\int r^2\rho^{1-\delta}u_t^2+C\Vert u \Vert_{ L^\infty}^2\Vert ru_r \Vert_{ L^2}^2.
	\end{aligned}
\end{equation}

For $N_2$, with Lemma \ref{le:upper bound of rho} and  \ref{le:uLP}, we have
\begin{equation}\label{N2}
	\begin{aligned}
		N_2\leq&\varepsilon\int r^2\rho^{1-\delta}u_t^2+C\int| r^2\rho^{2\gamma+1-3\delta}(v-u)^2|\leq \varepsilon\int r^2\rho^{1-\delta}u_t^2+C.
	\end{aligned}
\end{equation}

For $N_3$, with Lemma \ref{le:upper bound of rho} and \ref{le:uLinfty}, we have
\begin{equation}\label{N3}
	\begin{aligned}
		N_3\leq&\varepsilon\int r^2\rho^{1-\delta}u_t^2+C\int | r^2\rho^{1-\delta}v^2 u_r^2 |\leq \varepsilon\int r^2\rho^{1-\delta}u_t^2+C\Vert ru_r \Vert_{ L^2}^2.
	\end{aligned}
\end{equation}

For $N_4$, with Lemma \ref{le:upper bound of rho} and \ref{le:uLP} we have
\begin{equation}\label{N4}
	\begin{aligned}
		N_4\leq&\varepsilon\int r^2\rho^{1-\delta}u_t^2+C\int | \rho^{1-\delta}(v-u)^2 u^2   |\leq \varepsilon\int r^2\rho^{1-\delta}u_t^2+C(1+\Vert u\Vert_{ L^\infty}^2).
	\end{aligned}
\end{equation}

Together with \eqref{urL2-2}-\eqref{N4}, Lemma \ref{le:uLP}, \ref{le:uLinfty} and Gronwall's inequality, we have
$$
\Vert ru_r \Vert_{ L^2}^2+\int_0^t\int r^2\rho^{1-\delta} u_t^2\leq C, \ \ \text{and}\ \ \Vert u \Vert_{ L^\infty}\leq C,
$$
then, we have completed the proof of this lemma.
\end{proof}

\begin{corollary}\label{coro:hlinfty}
		For $\gamma\ge1$, $\delta$ satisfying \eqref{delta} and any $t\in [0,T]$, there holds
	\begin{equation}\label{h Linfty}
		\Vert \rho^{\delta-2}\rho_r (\cdot,t) \Vert_{L^\infty}\leq C.
	\end{equation} 
\end{corollary}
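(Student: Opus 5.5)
The bound follows from the identity $v-u=2\delta\rho^{\delta-2}\rho_r$ and the uniform bounds already established:
\[
\Vert \rho^{\delta-2}\rho_r\Vert_{L^\infty}=\frac{1}{2\delta}\Vert v-u\Vert_{L^\infty}\leq \frac{1}{2\delta}\left(\Vert v\Vert_{L^\infty}+\Vert u\Vert_{L^\infty}\right).
\]
Lemma \ref{le:uLinfty} gives $\Vert v(\cdot,t)\Vert_{L^\infty}\leq C$ for every $t\in[0,T]$. Lemma \ref{le:urL2} gives $\Vert u(\cdot,t)\Vert_{L^\infty}\leq C$ pointwise in time, which improves the time-integrated bound in Lemma \ref{le:uLinfty}. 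Combining these yields \eqref{h Linfty}.

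The only point needing care is that the $L^\infty$ bound on $v$ was obtained along characteristics of $v_t+uv_r+\frac{\gamma}{2\delta}\rho^{\gamma-\delta}(v-u)=0$. Writing this as $\frac{d}{dt}v(X(t),t)=-\frac{\gamma}{2\delta}\rho^{\gamma-\delta}(v-u)$, the linear damping term $-\frac{\gamma}{2\delta}\rho^{\gamma-\delta}v$ has a favourable sign because $\rho^{\gamma-\delta}\ge 0$. Hence
\[
|v(X(t),t)|\leq |v_0(X(0))|+C\int_0^t \Vert\rho\Vert_{L^\infty}^{\gamma-\delta}\Vert u\Vert_{L^\infty}\,ds.
\]
Here $\Vert\rho\Vert_{L^\infty}\leq C$ by Lemma \ref{le:upper bound of rho} and $\gamma-\delta>0$, so the right-hand side is finite.

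I will check that the characteristics $X(t)$ starting in $I_a$ stay in $I_a$, which holds since $u(a,t)=0$ and the boundary is characteristic. I will also check that $\Vert v_0\Vert_{L^\infty}\leq C$. This follows from $v_0=u_0+2\delta\rho_0^{\delta-2}\rho_{0,r}=u_0+\frac{2\delta}{\delta-1}(\rho_0^{\delta-1})_r$, together with the hypotheses $ru_0\in H^2$, $r(\rho_0^{\delta-1})_r\in H^1$, the Sobolev embedding $H^1(I_a)\hookrightarrow L^\infty(I_a)$, and $r\ge a>0$. With these two checks the corollary is an immediate consequence of the preceding lemmas.
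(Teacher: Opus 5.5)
Your proposal is correct and is exactly the reasoning behind the paper's corollary, which the paper states without proof right after Lemma \ref{le:urL2}: since $\rho^{\delta-2}\rho_r=\frac{1}{2\delta}(v-u)$, the pointwise-in-time bounds $\Vert v\Vert_{L^\infty}\le C$ from Lemma \ref{le:uLinfty} and $\Vert u\Vert_{L^\infty}\le C$ from Lemma \ref{le:urL2} give \eqref{h Linfty} directly. Your extra checks are sound but only re-verify what Lemma \ref{le:uLinfty} already supplies: the favourable sign of the damping along characteristics, characteristics staying in $I_a$, and $v_0\in L^\infty$ from $ru_0\in H^2$ and $r(\rho_0^{\delta-1})_r\in H^1$.
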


Finally, we establish the estimate of $u_t$.
\begin{lemma}\label{le:utL2}
	For $\gamma\ge1$, $\delta$ satisfying \eqref{delta} and any $t\in [0,T]$, there holds
	\begin{equation}\label{utL2}
		\Vert ru_t(\cdot,t) \Vert_{L^2}^2+\Vert rv_r(\cdot,t) \Vert_{L^2}^2+\int_0^t \Vert ru_{rrr}(\cdot,s)\Vert_{ L^2}^2ds+\int_0^t \Vert ru_{rt}(\cdot,s)\Vert_{ L^2}^2ds \leq C.
	\end{equation} 
\end{lemma}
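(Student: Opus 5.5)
I will follow the local argument (Step 3 of Lemma \ref{le:local u}), now with the global bounds available: Lemmas \ref{le:upper bound of rho}--\ref{le:urL2} and Corollary \ref{coro:hlinfty}. These give $\Vert\rho\Vert_{L^\infty}$, $\Vert u\Vert_{L^\infty}$, $\Vert v\Vert_{L^\infty}$, $\Vert\rho^{\delta-2}\rho_r\Vert_{L^\infty}$, $\Vert ru\Vert_{H^1}\le C$ and $\int_0^T\int r^2\rho^{1-\delta}u_t^2\le C$.

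\textbf{Step 1: time-differentiated equation.} Differentiate (\ref{u-2}) in $t$ and multiply by $2r^2u_t$. The viscous term $2\delta\rho^{\delta-1}(u_r+\frac2r u)_{rt}$ integrates by parts exactly as in the local computation. It produces the good terms $4\delta\int r^2\rho^{\delta-1}u_{rt}^2+8\delta\int\rho^{\delta-1}u_t^2$. Since $\rho^{\delta-1}\ge \Vert\rho\Vert_{L^\infty}^{\delta-1}\ge 1/C$, these control $\Vert ru_{rt}\Vert_{L^2}^2$. The commutator $\int r^2(\rho^{\delta-1})_r u_{rt}u_t$ is bounded by $C\Vert \rho^{\delta-1}(v-u)\Vert$ times the good terms. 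Here I use $(\rho^{\delta-1})_r=\frac{\delta-1}{2\delta}\rho^{\delta-1}(v-u)$ with $v-u$ bounded, so Young's inequality gives $\varepsilon\int r^2\rho^{\delta-1}u_{rt}^2+C\int r^2\rho^{\delta-1}u_t^2$.

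The remaining terms each need a separate bound:
\begin{itemize}
\item $(\rho^{\delta-1})_t(u_r+\frac2ru)_r$: rewrite it through $\rho^{\delta-1}(u_r+\frac2ru)_r$ (from the equation) times $(\rho^{\delta-1})_t/\rho^{\delta-1}=(1-\delta)(u_r+\frac2ru)+u\cdot(\text{bounded})$.
\item $\rho^{\gamma-\delta}(v-u)$ and its time derivative.
\item $(v-u)_tu_r$, where $(v-u)_t$ comes from (\ref{vt}) and the $\rho$-equation.
\end{itemize}
These are bounded by $C(\Vert u_r\Vert_{L^\infty}^2+1)(\Vert ru_t\Vert_{L^2}^2+1)$ plus $\varepsilon$ times the dissipation.

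\textbf{Step 2: closing the $u_t$ estimate.} The key ingredient is $\Vert u_r\Vert_{L^\infty}^2\le C\Vert ru_r\Vert_{L^2}\Vert ru_{rr}\Vert_{L^2}+C$. From (\ref{u-2}),
\[
\Vert r\rho^{\delta-1}(u_r+\tfrac2ru)_r\Vert_{L^2}\le C\Vert ru_t\Vert_{L^2}+C.
\]
This uses the bounds on $u$, $v$ and $\Vert ru_r\Vert_{L^2}$, and hence gives $\Vert ru_{rr}\Vert_{L^2}\le C\Vert ru_t\Vert_{L^2}+C$. The term $\int r^2|(v-u)_t||u_r||u_t|$ is the worst, because $(v-u)_t$ involves $\rho^{\delta-2}\rho_{rt}$, i.e. $u_{rr}$. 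I would instead bound $\Vert r(v-u)_t\Vert_{L^2}\le C(\Vert ru_{rr}\Vert_{L^2}+\Vert ru_r\Vert_{L^2}+\Vert rv_r\Vert_{L^2}+1)$. This yields
\[
\frac{d}{dt}\Vert ru_t\Vert_{L^2}^2+c\Vert ru_{rt}\Vert_{L^2}^2\le C(1+\Vert u_r\Vert_{L^\infty}^2)\bigl(1+\Vert ru_t\Vert_{L^2}^2+\Vert rv_r\Vert_{L^2}^2\bigr).
\]
For Gronwall I need $\int_0^T\Vert u_r\Vert_{L^\infty}^2\le C$. This follows from $\Vert u_r\Vert_{L^\infty}^2\le C\Vert ru_{rr}\Vert_{L^2}+C$ together with $\int_0^T\int r^2u_t^2\le C\int_0^T\int r^2\rho^{1-\delta}u_t^2\cdot\Vert\rho^{\delta-1}\Vert$. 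However, $\rho^{\delta-1}$ is unbounded, so this last inequality fails as written. I would instead use the $L^\infty$ bound on $\rho^{\delta-1}(u_r+\frac2ru)_r$ in a time-integrated form, or simply absorb it into the Gronwall argument through $\Vert u_r\Vert_{L^\infty}^2\le C(1+\Vert ru_t\Vert_{L^2})$, which is superlinear only by a square root and therefore harmless.

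\textbf{Step 3: $v_r$ and higher-order bounds.} Couple in $\Vert rv_r\Vert_{L^2}^2$ by differentiating (\ref{vt}) in $r$ and testing with $r^2v_r$, as in (\ref{lin vrL2}). The terms are bounded by $C(1+\Vert u_r\Vert_{L^\infty})\Vert rv_r\Vert_{L^2}^2+C\Vert ru_r\Vert_{L^2}^2$, where $\rho^{\gamma-\delta}_r=C\rho^{\gamma-1}(v-u)$ is bounded for $\gamma\ge1$. Gronwall on the sum, with initial data from the compatibility condition (\ref{compatibility condition}) (so that $\Vert ru_t(0)\Vert_{L^2}\le C$), closes the estimate.

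Finally, $\int_0^t\Vert ru_{rrr}\Vert_{L^2}^2$ follows by differentiating (\ref{u-2}) in $r$ as in (\ref{lin urrr}). Multiply by $\rho^{1-\delta}$, noting that $(\rho^{1-\delta})_r=C\rho^{1-\delta}(v-u)$ is bounded, and use $\int_0^t\Vert ru_{rt}\Vert_{L^2}^2\le C$.

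The main obstacle is the time-integrability of $\Vert u_r\Vert_{L^\infty}^2$ and of the $(v-u)_tu_r$ term, both of which depend on $u_{rr}$ through the degenerate coefficient.
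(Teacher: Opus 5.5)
\textbf{There is a genuine gap, exactly at the step you call the main obstacle.} You never obtain a time-integrable Gronwall coefficient, i.e.\ $\int_0^T\Vert u_r\Vert_{L^\infty}^2\,dt\le C$, and neither of your fallbacks works.

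Absorbing $\Vert u_r\Vert_{L^\infty}^2\le C(1+\Vert ru_t\Vert_{L^2})$ into Gronwall is not harmless. Set $Z=1+\Vert ru_t\Vert_{L^2}^2+\Vert rv_r\Vert_{L^2}^2$; your inequality then becomes $Z'\le CZ^{3/2}$. That bounds $Z$ only for $t\lesssim Z(0)^{-1/2}$, not on an arbitrary $[0,T]$, and a bound on every $[0,T]$ is the whole point of a global \emph{a priori} estimate. Your other fallback, an $L^\infty$ or time-integrated bound on $\rho^{\delta-1}(u_r+\frac{2}{r}u)_r$, is not supplied by any earlier lemma.

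\textbf{The missing idea.} Use (\ref{u-2}) after multiplying it by $\rho^{1-\delta}$, rather than passing through the singularly weighted quantity. This gives $2\delta(u_r+\frac{2}{r}u)_r=\rho^{1-\delta}u_t+\rho^{1-\delta}(\cdots)$. Since $\rho^{1-\delta}\le C$ by Lemma \ref{le:upper bound of rho}, you have $\Vert r\rho^{1-\delta}u_t\Vert_{L^2}^2\le C\int r^2\rho^{1-\delta}u_t^2$, which is integrable in time by Lemma \ref{le:urL2}. Hence $\int_0^T\Vert ru_{rr}\Vert_{L^2}^2\le C$, so $\int_0^T\Vert u_r\Vert_{L^\infty}^2\le C$, and your Gronwall closes. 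You instead tried to trade $\int r^2u_t^2$ for $\int r^2\rho^{1-\delta}u_t^2$, which is the wrong direction.

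The paper does exactly this in \eqref{good term}$_3$--\eqref{good term-2}, with the extra weight $\rho^{\frac{1-\delta}{2}}$. It obtains $\int_0^t\Vert r\rho^{\frac{1-\delta}{2}}u_r\Vert_{L^\infty}^2\le C$ and uses it as the Gronwall coefficient for $L_5$ and $L_8$. Once this is in place, your coupled Gronwall for $(\Vert ru_t\Vert_{L^2}^2,\Vert rv_r\Vert_{L^2}^2)$ is a legitimate and somewhat simpler alternative to the paper's integrations by parts of $(\rho^{\delta-1})_{rt}$ in $L_3$, $L_6$, $L_8$.

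\textbf{Two smaller fixes.}
\begin{itemize}
\item The correct identity is $(\rho^{\delta-1})_r=\frac{\delta-1}{2\delta}(v-u)$, without the factor $\rho^{\delta-1}$. As you wrote it, the commutator leaves an uncontrolled $\int r^2\rho^{\delta-1}u_t^2$. With the correct identity it contributes only $C\Vert ru_t\Vert_{L^2}^2$.
\item $(v-u)_t$ contains $\rho^{\delta-1}(u_r+\frac{2}{r}u)_r$ with the singular weight, so it is not bounded by $\Vert ru_{rr}\Vert_{L^2}$. Instead write $(v-u)_t=v_t-u_t$ and use \eqref{vt} to get $\Vert r(v-u)_t\Vert_{L^2}\le C(1+\Vert rv_r\Vert_{L^2}+\Vert ru_t\Vert_{L^2})$.
\end{itemize}
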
	
\begin{proof}
	Firstly, by virtue of (\ref{u-2}), Corollary \ref{coro:hlinfty}, Lemma \ref{le:upper bound of rho}, \ref{le:uLP} and \ref{le:urL2}, we have the following fact that
	\begin{equation}\label{good term}
		\begin{cases}
\displaystyle			\Vert r \rho^{\delta-1}(u_r+\frac{2}{r}u)_r\Vert_{ L^2}\leq C(1+\Vert ru_t\Vert_{ L^2}),\\
\displaystyle			\Vert r u_{rr} \Vert_{ L^2}+\Vert u_r \Vert_{ L^\infty}\leq C(1+\Vert ru_t\Vert_{ L^2}),\\
\displaystyle			\int_0^t\Vert r\rho^{\frac{1-\delta}{2}}u_{rr}\Vert_{ L^2}^2(s)ds\leq C(1+\int_0^t\Vert r\rho^{\frac{1-\delta}{2}}u_t \Vert_{ L^2}^2(s)ds)\leq C,
		\end{cases}
	\end{equation} with \eqref{good term}$_3$, we also have 
	\begin{equation}\label{good term-2}
	\begin{aligned}
		&\int_0^t\Vert r\rho^{\frac{1-\delta}{2}}u_{r}\Vert_{ L^\infty}^2(s)ds\\
		\leq& \int_0^t\Vert r\rho^{\frac{1-\delta}{2}}u_{r}\Vert_{ L^2}^2(s)ds+\int_0^t\Vert \rho^{\frac{1-\delta}{2}}u_{r}\Vert_{ L^2}^2(s)ds+\int_0^t\Vert r\rho^{\delta-2}\rho_r\rho^{\frac{3}{2}(1-\delta)} u_{r}\Vert_{ L^2}^2(s)ds+\int_0^t\Vert r\rho^{\frac{1-\delta}{2}}u_{rr}\Vert_{ L^2}^2(s)ds\\
		\leq& C(1+\int_0^t\Vert r\rho^{\frac{1-\delta}{2}}u_{rr}\Vert_{ L^2}^2(s)ds)\leq C.
	\end{aligned}
\end{equation}

	Secondly, differentiating (\ref{u-2}) with respect to $t$, we have
	\begin{equation}\label{eq utt}
		\begin{aligned}
			&u_{tt}+\frac{1}{2}(u^2)_{rt}+C(\rho^{\gamma-\delta})_t(v-u)+C\rho^{\gamma-\delta}(v_t-u_t)\\
			=&2\delta \rho^{\delta-1}(u_{rrt}+\frac{2}{r}u_{rt}-\frac{2}{r^2}u_t)+2\delta (\rho^{\delta-1})_t(u_{rr}+\frac{2}{r}u_{r}-\frac{2}{r^2}u)\\
			&+\delta (v-u)_t u_r+\delta (v-u) u_{rt}+(\delta-1)(v_t-u_t)\frac{2}{r}u+(\delta-1)(v-u)\frac{2}{r}u_t
		\end{aligned}
	\end{equation}
	
	and then, multiplying the \eqref{eq utt} by $2 r^2 u_t$ and integrating by parts over $I_a$, one arrives at
	\begin{equation}\label{utL2-1}
		\begin{aligned}
			&\frac{d}{dt}\Vert ru_t \Vert_{ L^2}^2=-C\int r^2(u^2)_{rt}u_t-C\int r^2 (\rho^{\gamma-\delta})_t(v-u)u_t-C\int r^2\rho^{\gamma-\delta}(v_t-u_t)u_t\\
			&+4\delta\int r^2 \rho^{\delta-1}(u_{rrt}+\frac{2}{r}u_{rt}-\frac{2}{r^2}u_t)u_t+4\delta\int r^2 (\rho^{\delta-1})_t(u_{rr}+\frac{2}{r}u_{r}-\frac{2}{r^2}u)u_t\\
			&+2\delta\int r^2(v-u)_t u_ru_t+C\int r^2 (v-u) u_{rt}u_t+2(\delta-1)\int r^2(v_t-u_t)\frac{2}{r}uu_t+2(\delta-1)\int r^2(v-u)\frac{2}{r}u_t^2\\&=\sum_{i=1}^9 L_i.
		\end{aligned}
	\end{equation}

For $L_1$, with integrating by parts, Lemma \ref{le:upper bound of rho}, \ref{le:urL2} and Corollary \ref{coro:hlinfty}, we have
\begin{equation}\label{L1}
	\begin{aligned}
			L_1=&-C\int r^2(u^2)_{rt}u_t
			=C\int r uu_t^2+C\int r^2 uu_t u_{rt}\\
			\leq&C\Vert u \Vert_{ L^\infty}\Vert ru_t \Vert_{ L^2}^2+\varepsilon\int r^2 \rho^{\delta-1}u_{rt}^2+C\int r^2\rho^{1-\delta}u^2 u_t^2\\
			\leq&C\Vert ru_t \Vert_{ L^2}^2+\varepsilon\int r^2 \rho^{\delta-1}u_{rt}^2.
	\end{aligned}
\end{equation}

For $L_2$, with Lemma \ref{le:upper bound of rho}, \ref{le:uLP}, \ref{le:urL2}, Corollary \ref{coro:hlinfty} and the fact that $ \gamma+1-2\delta\ge 0$, we have
\begin{equation}\label{L2}
	\begin{aligned}
		L_2=&-C\int r^2 (\rho^{\gamma-\delta})_t(v-u)u_t
		\leq C\Vert r^2 \rho^{\gamma-\delta-1}|\rho u_r+\rho_ru +\frac{2}{r}\rho u||v-u| |u_t|\Vert_{ L^1}\\
		\leq&C\Vert \rho^{\gamma-\delta}\Vert_{ L^\infty} \Vert ru_r \Vert_{ L^2}\Vert v-u\Vert_{ L^\infty} \Vert ru_t \Vert_{ L^2}+C\Vert \rho^{\delta-2}\rho_r \Vert_{ L^\infty}\Vert \rho^{\gamma+1-2\delta}\Vert_{ L^\infty} \Vert ru\Vert_{ L^2}\Vert v-u\Vert_{ L^\infty} \Vert ru_t \Vert_{ L^2}\\
		&+C\Vert \rho^{\gamma-\delta}\Vert_{ L^\infty}\Vert ru \Vert_{ L^2}\Vert v-u\Vert_{ L^\infty} \Vert ru_t \Vert_{ L^2}\\
		\leq&C(1+\Vert ru_t \Vert_{ L^2}^2).
	\end{aligned}
\end{equation}

For $L_3$, with Lemma \ref{le:upper bound of rho} and the fact that $ \gamma+1-2\delta\ge 0$, we have
\begin{equation}\label{L3}
	\begin{aligned}
		L_3=&-C\int r^2\rho^{\gamma-\delta}(v_t-u_t)u_t
		=-C\int r^2 \rho^{\gamma-\delta}(\rho^{\delta-1})_{rt}u_t\\
		=&C\int r \rho^{\gamma-\delta}(\rho^{\delta-1})_{t}u_t
		+C\int r^2 (\rho^{\gamma-\delta})_r(\rho^{\delta-1})_{t}u_t
		+C\int r^2 \rho^{\gamma-\delta}(\rho^{\delta-1})_{t}u_{rt}\\
		=&C\int r \rho^{\gamma-\delta}[(\rho^{\delta-1})_r u+ (\delta-1)\rho^{\delta-1}u_r+(\delta-1)\frac{2}{r}\rho^{\delta-1}u ]u_t\\
		+&C\int r^2 \rho^{\gamma+1-2\delta}\rho^{\delta-2}\rho_r[(\rho^{\delta-1})_r u+ (\delta-1)\rho^{\delta-1}u_r+(\delta-1)\frac{2}{r}\rho^{\delta-1}u ]u_t\\
		+&C\int r^2 \rho^{\gamma-\delta}[(\rho^{\delta-1})_r u+ (\delta-1)\rho^{\delta-1}u_r+(\delta-1)\frac{2}{r}\rho^{\delta-1}u ]u_{rt}\\
		\leq&C(\Vert ru \Vert_{ L^2}+\Vert ru_r \Vert_{ L^2})\Vert ru_t \Vert_{ L^2}+\varepsilon\int r^2 \rho^{\delta-1}u_{rt}^2+C(\Vert ru \Vert_{ L^2}^2+\Vert ru_r \Vert_{ L^2}^2)\\
		\leq&C(1+\Vert ru_t \Vert_{ L^2}^2)+\varepsilon\int r^2 \rho^{\delta-1}u_{rt}^2.
	\end{aligned}
\end{equation}

For $L_4$, with integrating by parts Lemma \ref{le:upper bound of rho} and Corollary \ref{coro:hlinfty}, we have
\begin{equation}\label{L4}
	\begin{aligned}
		L_4=&4\delta\int r^2 \rho^{\delta-1}(u_{rrt}+\frac{2}{r}u_{rt}-\frac{2}{r^2}u_t)u_t\\
		=&-4\delta\int 2r \rho^{\delta-1}u_{rt}u_t-4\delta\int r^2 (\rho^{\delta-1})_ru_{rt}u_t-4\delta\int r^2 \rho^{\delta-1}u_{rt}^2+4\delta\int 2r \rho^{\delta-1}u_{rt}u_t-8\delta\int \rho^{\delta-1}u_t^2\\
		\leq&\varepsilon\int r^2 \rho^{\delta-1}u_{rt}^2+C\int r^2 (\rho^{\delta-1})_r^2\rho^{1-\delta}u_t^2-4\delta\int r^2 \rho^{\delta-1}u_{rt}^2-8\delta\int \rho^{\delta-1}u_t^2\\
		\leq&\varepsilon\int r^2 \rho^{\delta-1}u_{rt}^2+C\Vert r u_t \Vert_{ L^2}^2-4\delta\int r^2 \rho^{\delta-1}u_{rt}^2-8\delta\int \rho^{\delta-1}u_t^2.
	\end{aligned}
\end{equation}

For $L_5$, with \eqref{good term}, \eqref{good term-2} and Corollary \ref{coro:hlinfty}, we have
\begin{equation}\label{L5}
	\begin{aligned}
		L_5=&4\delta\int r^2 (\rho^{\delta-1})_t(u_{rr}+\frac{2}{r}u_{r}-\frac{2}{r^2}u)u_t\\
		=&-C\int r^2[(\rho^{\delta-1})_r u+ (\delta-1)\rho^{\delta-1}u_r+(\delta-1)\frac{2}{r}\rho^{\delta-1}u ]  (u_{rr}+\frac{2}{r}u_{r}-\frac{2}{r^2}u)u_t\\
		\leq&C\Vert \rho^{\delta-2}\rho_r \Vert_{ L^\infty}\Vert u \Vert_{ L^\infty}(\Vert ru_{rr}\Vert_{ L^2}+\Vert ru_r\Vert_{ L^2}+\Vert ru\Vert_{ L^2})\Vert ru_t \Vert_{ L^2}\\
		&+\varepsilon\int \rho^{\delta-1}u_t^2+C\int  r^4\rho^{\delta-1}u_r^2 (u_{rr}+\frac{2}{r}u_{r}-\frac{2}{r^2}u)^2 \\
	  &+C\Vert u \Vert_{ L^\infty}\Vert r \rho^{\delta-1}(u_r+\frac{2}{r}u)_r \Vert_{ L^2} \Vert r u_t \Vert_{ L^2}\\
	  \leq&C(1+\Vert ru_{t}\Vert_{ L^2}^2)+\varepsilon\int \rho^{\delta-1}u_t^2+C\Vert r \rho^{\frac{1-\delta}{2}}u_r\Vert_{ L^\infty}^2\Vert r \rho^{\delta-1}(u_r+\frac{2}{r}u)_r \Vert_{ L^2}^2\\
	  &+C\Vert u \Vert_{ L^\infty}\Vert r \rho^{\delta-1}(u_r+\frac{2}{r}u)_r \Vert_{ L^2} \Vert r u_t \Vert_{ L^2}\\
	  \leq&C(1+\Vert r \rho^{\frac{1-\delta}{2}}u_r\Vert_{ L^\infty}^2)\Vert ru_{t}\Vert_{ L^2}^2+\varepsilon\int \rho^{\delta-1}u_t^2+C.
	\end{aligned}
\end{equation}

For $L_6$, with $L_5$, Lemma \ref{le:upper bound of rho}, \ref{le:uLP}, Corollary \ref{coro:hlinfty} and the fact that $$\displaystyle(\rho^{\delta-1})_t=-[(\rho^{\delta-1})_r u+ (\delta-1)\rho^{\delta-1}u_r+(\delta-1)\frac{2}{r}\rho^{\delta-1}u, ]$$ we have
\begin{equation}\label{L6}
	\begin{aligned}
		L_6=&2\delta\int r^2(v-u)_t u_ru_t=C\int r^2(\rho^{\delta-1})_{rt} u_ru_t\\
		=&-C\int 2r(\rho^{\delta-1})_{t} u_ru_t-C\int r^2(\rho^{\delta-1})_{t} u_{rr}u_t-C\int r^2(\rho^{\delta-1})_{t} u_ru_{rt}\\
		\leq&C|L_5|+C\int |r^2(\rho^{\delta-1})_t \frac{2}{r^2}uu_t|+C\int |r^2(\rho^{\delta-1})_{t} u_ru_{rt}|\\
		\leq& C|L_5|+C\int |[(\rho^{\delta-1})_r u+ (\delta-1)\rho^{\delta-1}u_r+(\delta-1)\frac{2}{r}\rho^{\delta-1}u ]  u u_t|\\
		&+\varepsilon \int r^2 \rho^{\delta-1}u_{rt}^2+C\int r^2\rho^{1-\delta} |[(\rho^{\delta-1})_r u+ (\delta-1)\rho^{\delta-1}u_r+(\delta-1)\frac{2}{r}\rho^{\delta-1}u ]^2u_r^2\\
		\leq&C\Vert \rho^{\delta-2}\rho_r \Vert_{ L^\infty}\Vert u \Vert_{ L^\infty}\Vert ru \Vert_{ L^2}\Vert r u_t \Vert_{ L^2}+\varepsilon \int \rho^{\delta-1}u_t^2+C\Vert r\rho^{\frac{\delta-1}{2}}u_r\Vert_{ L^2}^2\Vert u\Vert_{ L^\infty}^2+C\Vert \rho^{\frac{\delta-1}{2}}u\Vert_{ L^2}^2\Vert u\Vert_{ L^\infty}^2\\
		&+\varepsilon \int r^2 \rho^{\delta-1}u_{rt}^2+C\Vert \rho^{1-\delta}\Vert_{ L^\infty}\Vert \rho^{\delta-2}\rho_r \Vert_{ L^\infty}^2\Vert u \Vert_{ L^\infty}^2\Vert ru_r \Vert_{ L^2}^2+C\Vert r\rho^{\frac{\delta-1}{2}}u_r\Vert_{ L^2}^2\Vert u_r\Vert_{ L^\infty}^2\\
		&+C\Vert r\rho^{\frac{\delta-1}{2}}u_r\Vert_{ L^2}^2\Vert u\Vert_{ L^\infty}^2+C|L_5|\\
		\leq& C(1+C\Vert r \rho^{\frac{\delta-1}{2}}u_r\Vert_{ L^2}^2)\Vert ru_t \Vert_{ L^2}^2+C\Vert r \rho^{\frac{\delta-1}{2}}u_r\Vert_{ L^2}^2+C\Vert \rho^{\frac{\delta-1}{2}}u\Vert_{ L^2}^2+\varepsilon \int r^2 \rho^{\delta-1}u_{rt}^2+\varepsilon \int \rho^{\delta-1}u_t^2\\&+C(1+|L_5|),
	\end{aligned}
\end{equation} where we used the fact that
\begin{equation}\label{urLinfty}
	\Vert u_r \Vert_{ L^\infty}^2\leq \Vert u_r \Vert_{ L^2}^2+\Vert u_{rr}\Vert_{ L^2}^2\leq C+C\Vert ru_t \Vert_{ L^2}^2.
\end{equation}

For $L_7$, with Lemma \ref{le:upper bound of rho} and Corollary \ref{coro:hlinfty}, we have
\begin{equation}\label{L7}
	\begin{aligned}
		L_7=&C\int r^2 (v-u) u_{rt}u_t\leq \varepsilon \int r^2 \rho^{\delta-1}u_{rt}^2+C\int r^2 \rho^{1-\delta} (v-u)^2 u_t^2\\
		\leq& \varepsilon \int r^2 \rho^{\delta-1}u_{rt}^2+C\Vert ru_t \Vert_{ L^2}^2.
	\end{aligned}
\end{equation}

For $L_8$, with $L_{5}$, $L_6$ and Lemma \ref{le:upper bound of rho}, \ref{le:uLP} and \ref{le:urL2}, we have
\begin{equation}\label{L8}
	\begin{aligned}
		L_8=&2(\delta-1)\int r^2(v_t-u_t)\frac{2}{r}uu_t=C\int r(\rho^{\delta-1})_{rt} uu_t\\
		=&-C\int (\rho^{\delta-1})_{t} uu_t-C\int r(\rho^{\delta-1})_{t} u_ru_t-C\int r(\rho^{\delta-1})_{t} uu_{rt}\\
		\leq&C(1+\Vert r \rho^{\frac{1-\delta}{2}}u_r\Vert_{ L^\infty}^2)\Vert ru_{t}\Vert_{ L^2}^2+\varepsilon\int \rho^{\delta-1}u_t^2+C\\
		&+\varepsilon\int r^2\rho^{\delta-1}u_{rt}^2+C\Vert \rho^{1-\delta}[(\rho^{\delta-1})_r u+ (\delta-1)\rho^{\delta-1}u_r+(\delta-1)\frac{2}{r}\rho^{\delta-1}u ]^2u^2\Vert_{ L^1}\\
		\leq&C(1+\Vert r \rho^{\frac{1-\delta}{2}}u_r\Vert_{ L^\infty}^2)\Vert ru_{t}\Vert_{ L^2}^2+\varepsilon\int \rho^{\delta-1}u_t^2+C+\varepsilon\int r^2\rho^{\delta-1}u_{rt}^2\\
		&+C\Vert \rho^{1-\delta}\Vert_{ L^\infty}\Vert \rho^{\delta-2}\rho_r \Vert_{ L^\infty}^2\Vert u \Vert_{ L^\infty}^2\Vert u_r \Vert_{ L^2}^2+C\Vert r\rho^{\frac{\delta-1}{2}}u_r\Vert_{ L^2}^2\Vert u\Vert_{ L^\infty}^2+C\Vert \rho^{\frac{\delta-1}{2}}u\Vert_{ L^2}^2\Vert u\Vert_{ L^\infty}^2\\
		\leq&C(1+\Vert r \rho^{\frac{1-\delta}{2}}u_r\Vert_{ L^\infty}^2)\Vert ru_{t}\Vert_{ L^2}^2+C\Vert r\rho^{\frac{\delta-1}{2}}u_r\Vert_{ L^2}^2+C\Vert \rho^{\frac{\delta-1}{2}}u\Vert_{ L^2}^2+\varepsilon\int r^2\rho^{\delta-1}u_{rt}^2+\varepsilon\int \rho^{\delta-1}u_t^2+C.
	\end{aligned}
\end{equation}

For $L_9$, with Lemma \ref{le:upper bound of rho} and Corollary \ref{coro:hlinfty},we have
\begin{equation}\label{L9}
	\begin{aligned}
		L_9=&2(\delta-1)\int r^2(v-u)\frac{2}{r}u_t^2\leq C\Vert ru_t\Vert_{ L^2}^2.
	\end{aligned}
\end{equation}

Finally, together with \eqref{good term}, \eqref{utL2-1}, $L_1-L_9$, Lemma \ref{le:uLP}, Gronwall's inequality and the compatibility conditions, we have
$$
\Vert r u_t \Vert_{ L^2}^2+\int_0^t\int r^2\rho^{\delta-1}u_{rt}^2drds \leq C,
$$
and then with \eqref{good term}$_2$, we obtain that
\begin{equation}\label{urupper}	
	\Vert ru_{rr}\Vert_{ L^2}+\Vert u_r \Vert_{ L^\infty}\leq C,
\end{equation} and then, by virtue of \eqref{vt} and \eqref{urupper}, we can easily obtain that
\begin{equation}\label{vr L2}
	\Vert rv_r \Vert_{ L^2} \leq C.
\end{equation}

We also have the following fact that
$$
\int_0^t\int r^2 u_{rt}^2drds=\int_0^t\int r^2 \rho^{1-\delta}\rho^{\delta-1}u_{rt}^2drds\leq C\int_0^t\int r^2 \rho^{\delta-1}u_{rt}^2drds\leq C,
$$
and,
\begin{equation}\label{urrrL2}
	\begin{aligned}
		&\int_0^t \Vert ru_{rrr}\Vert_{ L^2}^2(s)ds\\
		\sim& \int_0^t \Vert r \left(\rho^{1-\delta}u_t+\rho^{1-\delta} uu_r+\rho^{\gamma+1-2\delta}(v-u) - (\frac{2}{r}u)_r-\rho^{1-\delta}vu_r+\rho^{1-\delta}(v-u)\frac{2}{r}u\right)_r \Vert_{ L^2}^2(s)ds\\
		\sim&C+ \int_0^t \Vert r u_{rt}\Vert_{ L^2}^2(s)ds\leq C.
	\end{aligned}
\end{equation}

Then, we have proved this lemma.
\end{proof}

\begin{lemma}\label{le:rho gamma-1}
For $\gamma\ge1$, $\delta$ satisfying \eqref{delta} and any $t\in [0,T]$, there holds
\begin{equation}\label{rho gamma-1}
\Vert r\rho(\cdot,t)\Vert_{H^2}+\Vert r\rho_t(\cdot,t) \Vert_{H^1}\leq C.
\end{equation}
\end{lemma}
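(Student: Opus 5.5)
To prove Lemma \ref{le:rho gamma-1}, I would work directly from the continuity equation $\rho_t+\rho_r u+\rho u_r+\frac{2}{r}\rho u=0$. I would treat it as a linear transport equation for $\rho$ and its first two spatial derivatives, with the velocity bounds already in hand as coefficients. Those bounds are Lemma \ref{le:urL2} for $\Vert u\Vert_{L^\infty}$ and $\Vert ru_r\Vert_{L^2}$, \eqref{urupper} for $\Vert u_r\Vert_{L^\infty}$ and $\Vert ru_{rr}\Vert_{L^2}$, and \eqref{utL2} for $\int_0^T\Vert ru_{rrr}\Vert_{L^2}^2\,ds$. Everything reduces to weighted $L^2$ energy estimates closed by Gronwall's inequality, in the same way as \eqref{lin rho H2} in the local theory.

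\textbf{Zeroth and first order.} Multiplying the equation by $2r^2\rho$ and integrating by parts, the transport term gives $-\int r^2 u(\rho^2)_r=\int (2ru+r^2u_r)\rho^2$. Hence
\[
\frac{d}{dt}\Vert r\rho\Vert_{L^2}^2\le C(\Vert u\Vert_{L^\infty}+\Vert u_r\Vert_{L^\infty})\Vert r\rho\Vert_{L^2}^2 .
\]
Next I differentiate in $r$:
\[
\rho_{rt}+u\rho_{rr}+2u_r\rho_r+\rho u_{rr}+\tfrac{2}{r}(\rho_r u+\rho u_r)-\tfrac{2}{r^2}\rho u=0.
\]
Multiplying by $2r^2\rho_r$, the only term not controlled by $\Vert u\Vert_{W^{1,\infty}}\Vert r\rho\Vert_{H^1}^2$ is $\int r^2\rho u_{rr}\rho_r$. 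I bound it by $\Vert\rho\Vert_{L^\infty}\Vert ru_{rr}\Vert_{L^2}\Vert r\rho_r\Vert_{L^2}$, using Lemma \ref{le:upper bound of rho}.

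\textbf{Second order.} Differentiating once more, the new bad terms are $\rho u_{rrr}$ and $\rho_r u_{rr}$. For the first, $|\int r^2\rho u_{rrr}\rho_{rr}|\le C\Vert ru_{rrr}\Vert_{L^2}\Vert r\rho_{rr}\Vert_{L^2}$, and I would absorb it using $\int_0^T\Vert ru_{rrr}\Vert_{L^2}^2\le C$ from Lemma \ref{le:utL2}. For the second, $\Vert\rho_r\Vert_{L^\infty}\Vert ru_{rr}\Vert_{L^2}\le C\Vert r\rho\Vert_{H^2}$, where $\Vert\rho_r\Vert_{L^\infty}\le C\Vert r\rho\Vert_{H^2}$ holds since $r\ge a>0$. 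The commutator terms coming from the weight $r$ are lower order because $1/r\le 1/a$.

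This yields
\[
\frac{d}{dt}\Vert r\rho\Vert_{H^2}^2\le C\big(1+\Vert u_r\Vert_{L^\infty}+\Vert ru_{rrr}\Vert_{L^2}\big)\Vert r\rho\Vert_{H^2}^2+C\Vert ru_{rrr}\Vert_{L^2}^2 .
\]
The coefficient is integrable in time, and $\Vert r\rho_0\Vert_{H^2}$ is finite by \eqref{ini data} (equivalently, by the hypotheses of Theorem \ref{localthm}), so Gronwall's inequality gives $\Vert r\rho\Vert_{H^2}\le C$. The time-derivative bound then follows by reading it off the equation:
\[
\Vert r\rho_t\Vert_{H^1}\le C\Vert u\Vert_{W^{1,\infty}}\Vert r\rho\Vert_{H^2}+C\Vert\rho\Vert_{W^{1,\infty}}\big(\Vert ru\Vert_{H^1}+\Vert ru_{rr}\Vert_{L^2}\big)\le C .
\]

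\textbf{Main obstacle.} The delicate point is the top-order term $\rho u_{rrr}$, since we only have $u_{rrr}$ in $L^2_tL^2$ rather than $L^\infty_t$. The plan depends on Young's inequality giving a coefficient that is integrable in time, and on carrying out the integrations by parts with weight $r^2$ on the half-line $I_a$. The boundary terms at $r=a$ must be checked. The transport terms produce $a^2u(a)\rho_{r}^2(a)$-type contributions, and these vanish because of the boundary condition \eqref{boundary}, $u(a,t)=0$. The decay at infinity is justified by the regularity \eqref{regularity-2} from the local theorem.
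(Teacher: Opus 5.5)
Your proposal is correct and is essentially the argument the paper has in mind. The paper omits the details, saying only that the bound follows from the continuity equation together with Lemmas \ref{le:uLP}, \ref{le:urL2} and \ref{le:utL2}, and your weighted transport estimates for $\rho$, $\rho_r$, $\rho_{rr}$ supply exactly those details: the top-order term $\rho u_{rrr}$ is controlled through $\int_0^T\Vert ru_{rrr}\Vert_{L^2}^2\,ds\le C$ and Gronwall, and $r\rho_t$ is then read off the equation.

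Two small points should be made explicit. First, $r\rho_0\in H^2$ is not literally listed in \eqref{ini data}, but it follows from $\rho_0\in L^\infty$, $r^2\rho_0\in L^1$ and $r(\rho_0^{\delta-1})_r\in H^1$ via $\rho_{0,r}=\frac{1}{\delta-1}\rho_0^{2-\delta}(\rho_0^{\delta-1})_r$. Second, the second-order energy identity involves $u\rho_{rrr}$, which $r\rho\in H^2$ does not control, so that step needs a standard mollification/commutator argument rather than the regularity \eqref{regularity-2} alone.
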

\begin{proof}
	We can use continuity equation \eqref{system1D}$_1$, Lemma \ref{le:uLP}, \ref{le:urL2} and \ref{le:utL2} obtain this lemma, we omit the details here.
\end{proof}

\subsection*{Proof of Theorem \ref{main result}}

With the preparations above, we complete the 
proof of Theorem \ref{main result}. By Theorem \ref{localthm}, there exists a maximal time $T^*>0$ such that 
\eqref{system1D} and \eqref{initial} admits a unique strong solution $(\rho,u)$ 
on $I_a\times[0,T^*)$ with the regularity stated in \eqref{regularity-2}. 
Then, we let $T^*<+\infty$. However, the global 
{ \it{a priori}} estimates obtained in Lemma \ref{rho L1 estimate}-\ref{le:rho gamma-1}, 
together with Remark \ref{re1}, are 
independent of $T^*$. Hence the solution 
$(\rho,u)$ can be extended beyond $T^*$ by applying Theorem 
\ref{localthm} again, contradicting the maximality of $T^*$. 
Therefore $T^*=+\infty$, and the solution is global-in-time. The 
uniqueness is standard and follows from the local uniqueness. 
This completes the proof of Theorem \ref{main result}. Please refer to Section 4.3 in \cite{W-Z SIAM 2025} for more details.

\section*{Acknowledgement}
I would like to express my sincere gratitude to Professor Huanyao Wen and Professor Xianpeng Hu for their valuable guidance and fruitful academic exchanges throughout this work.

\end{document}